	\patchcmd{\section}{\scshape}{\scshape\bfseries}{}{}
	\renewcommand{\@secnumfont}{\scshape\bfseries}
	\theoremstyle{theorem}
	\newtheorem{theorem}{Theorem}
	\newtheorem*{question}{Question}
	\newtheorem{corollary}[theorem]{Corollary}
	\newtheorem{lemma}[theorem]{Lemma}
	\newtheorem{proposition}[theorem]{Proposition}
	\numberwithin{theorem}{section}
	\theoremstyle{plain}
	\newtheorem{result}{Theorem}
	\newtheorem{propositionresult}[result]{Proposition}
	\theoremstyle{definition}
	\newtheorem*{definition}{Definition}
	\newtheorem{example}[theorem]{Example}
	\theoremstyle{remark}
	\newtheorem{remark}[theorem]{Remark}
	\numberwithin{equation}{section}
	\numberwithin{table}{section}
	\def\blfootnote{\gdef\@thefnmark{}\@footnotetext}
	\DeclareMathOperator{\Ric}{Ric}
	\DeclareMathOperator{\symrank}{symrank}
	\DeclareMathOperator{\Image}{Im}
	\DeclareMathOperator{\grad}{grad}
	\DeclareMathOperator{\Hess}{Hess}
	\DeclareMathOperator{\trace}{trace}
	\DeclareMathOperator{\codim}{codim}
	\renewcommand{\index}{\mathrm{index}\,}
	\DeclareMathSymbol{\lsb@l}{\mathalpha}{letters}{`l}
	\newcommand*{\defeq}{\mathrel{\vcenter{\baselineskip0.5ex \lineskiplimit0pt
				\hbox{\scriptsize.}\hbox{\scriptsize.}}}%
		=}
	\title[Torus actions \& $\Ric_k>0$]{Torus actions on manifolds with positive intermediate Ricci curvature}
	\author{Lawrence Mouill\'e}
	\address{Department of Mathematics \\ Syracuse University \\ Syracuse, NY USA.}
	\email{lgmouill@syr.edu}
	\date{\today}
\begin{document}

	\begin{abstract}
		We study closed, simply connected manifolds with positive $2^\mathrm{nd}$-intermediate Ricci curvature and large symmetry rank.
		In odd dimensions, we show that they are spheres.
		In even dimensions other than $6$, we show that they  must have positive Euler characteristic.
		Under stronger assumptions on the symmetry rank, we show that such even dimensional manifolds must have trivial odd degree integral cohomology, and if the second Betti number is no more than $1$, they are either spheres or complex projective spaces.
		In the process, we establish new tools for studying isometric actions on closed manifolds with positive $k^\mathrm{th}$-intermediate Ricci for values of $k \geq 2$.
		These tools include generalizations of the isotropy rank lemma, symmetry rank bound, and connectedness principle from the setting of positive sectional curvature.

	\end{abstract}

	\maketitle
	
	\blfootnote{\textup{2020} \textit{Mathematics Subject Classification}: 53C20 (Primary), 57S25, 57N65, 58E10 (Secondary)}
	
	\blfootnote{The author was funded in part by NSF Grant DMS - 1612049 and NSF Grant DMS - 2202826.}
	
	\section{Introduction}

	On an $n$-dimensional manifold, positive $k^\mathrm{th}$-intermediate Ricci curvature is a condition interpolating between positive sectional curvature ($k = 1$) and positive Ricci curvature ($k = n - 1$). 
	There has recently been increased interest in studying manifolds with lower bounds on $k^\mathrm{th}$-intermediate Ricci curvature.
	For example, many results from the setting of positive or non-negative sectional curvature have been adapted to this setting, including generalizations of the Synge theorem and Weinstein fixed point theorem by Wilhelm \cite{wilhelminterm}, the Heintze-Karcher inequality by Chahine \cite{yousef}, and the Gromoll-Meyer theorem and Cheeger-Gromoll soul theorem for open manifolds by Shen \cite{shenkth}.
	In addition, many comparison results have been proven by Guijarro and Wilhelm in their series of papers \cite{focalradius,restrictions,softconnprinc}.
	In this article, to further the study of positive intermediate Ricci curvature, we take inspiration from the Grove symmetry program.
	
	In the setting of positive sectional curvature, the Grove symmetry program has proven to be a great source of insights.
	The goal of this program is to classify positively curved manifolds that have ``large isometry groups.''
	Karsten Grove initiated this investigation in 1992, motivated by a result of Hsiang and Kleiner \cite{HsiangKleiner} and the classification of positively curved homogeneous spaces that was resolved in the 1970's; see \cite{Wallach_72,BerardBergery,WZHomog}.
	The Grove symmetry program has resulted in many major classification results while also motivating constructions of new examples of manifolds with lower curvature bounds along with discoveries of unexpected topological structure of positively curved manifolds.
	For more information, see \cite{geomofandviasym,KennardWiemelerWilking} and the references therein.
	One common notion of ``large isometry group'' is large symmetry rank.
	
	\begin{definition}
		The \textit{symmetry rank} of a Riemannian manifold $(M,g)$ is the rank of its isometry group.
		We denote this quantity by $\symrank(M,g)$. 
		Equivalently, $\symrank(M,g)$ is the maximal dimension of a torus that can act effectively and by isometries on $(M,g)$.
	\end{definition} 
	
	Grove and Searle proved that any closed, connected, $n$-dimensional manifold with positive sectional curvature must have $\symrank(M,g)\leq\lfloor\frac{n+1}{2}\rfloor$, and in the case of equality, the manifold must be diffeomorphic to a sphere, complex or real projective space, or a lens space \cite{maxsymrank}.
	Since then, many have studied closed manifolds with positive sectional curvature and large symmetry rank; see, for example, \cite{connprinc}, \cite{Rong_2002}, \cite{Fang_2005}, \cite{KennardLogBounds}, \cite{AmannKennardDim10-16}, \cite{KennardWiemelerWilking}. 	
	Motivated by these works and, more generally, the success of the Grove symmetry program, we study manifolds with positive intermediate Ricci curvature and large symmetry rank in this article.
	
	\begin{definition}
		A Riemannian manifold $(M,g)$ is said to have \textit{positive $\mathit{2}^\mathit{nd}$-intermediate Ricci curvature} if, for any choice of orthonormal vectors $\{u,e_1,e_2\}$, the sum of sectional curvatures $\sec(u,e_1)+\sec(u,e_2)$ is positive.
		We abbreviate this condition as $\Ric_2(M,g)>0$, omitting $M$ or $g$ when they are understood.
	\end{definition}
	
	For our first main result, we generalize the Grove-Searle maximal symmetry rank theorem from \cite{maxsymrank} to the setting of manifolds with $\Ric_2>0$:
	
	\begin{result}\label{result:main}
		Let $(M^n,g)$ be a closed, connected Riemannian manifold of dimension $n\geq 3$ with $\Ric_2>0$.
		Then 
		\[
			\symrank(M^n,g) \leq \left\lfloor \frac{n+1}{2} \right\rfloor.
		\]
		Furthermore, if $M^n$ is simply connected and a torus $T^r$ of rank $r = \lfloor \frac{n+1}{2} \rfloor$ acts effectively and by isometries on $M^n$, then the following hold:
		\begin{enumerate}
			\item If the dimension $n$ is odd, then $M^n$ is diffeomorphic to a sphere.
			
			\item If the dimension $n$ is even, $n\neq 6$, and the second Betti number satisfies $b_2(M^n) \leq 1$, then $M^n$ is either diffeomorphic to a sphere or homeomorphic to a complex projective space.
		\end{enumerate} 
	\end{result}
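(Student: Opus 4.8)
The plan is to transplant the argument of Grove and Searle from \cite{maxsymrank} into the $\Ric_2>0$ setting, replacing each positive-sectional-curvature ingredient with the $\Ric_2$-analogue developed earlier: the generalized Frankel theorem (two closed totally geodesic submanifolds $N_1,N_2\subseteq M^n$ with $\dim N_1+\dim N_2\geq n+1$ must intersect), the generalized isotropy rank lemma, the generalized symmetry rank bound, the generalized connectedness principle (a totally geodesic $N^{n-c}\hookrightarrow M^n$ is $(n-2c)$-connected, one degree weaker than Wilking's estimate for $\sec>0$), and the generalized Weinstein fixed-point theorem of Wilhelm \cite{wilhelminterm}. The inequality $\symrank(M^n,g)\leq\lfloor(n+1)/2\rfloor$ is the generalized symmetry rank bound, proved by induction on $n$: given an effective isometric $T^r$-action, fixed-point sets of subtori are totally geodesic of even codimension, the generalized isotropy rank lemma produces a circle subgroup $S^1$ with $\mathrm{Fix}(S^1)$ nonempty and proper (hence of dimension $\leq n-2$), and $T^r/S^1$ acts effectively on $\mathrm{Fix}(S^1)$, so the inductive hypothesis gives $r-1\leq\lfloor((n-2)+1)/2\rfloor$.

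For the rigidity statement, assume $M^n$ is simply connected and $r=\lfloor(n+1)/2\rfloor$. The generalized connectedness principle makes components of fixed-point sets of low-codimension subtori highly connected in $M$, so in a stable range their cohomology mirrors that of $M$; run in reverse, this reconstructs $H^*(M)$ from the smallest fixed-point sets, and it propagates simple connectivity downward. For odd $n=2m+1$ (so $r=m+1$) I would produce a descending chain $M=N_0\supset N_1\supset\cdots\supset N_m$ in which each $N_{i+1}$ is a codimension-two fixed-point set of a circle subgroup inside $N_i$, the complement of consecutive members is equivariantly a disk bundle, and $N_m$ is a circle; the disk-bundle decomposition then shows $M^n$ is a homotopy sphere, and linearity of the action on each piece upgrades this to $M^n\cong S^n$. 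For even $n=2m$ (so $r=m$), the positive-Euler-characteristic conclusion available when $n\neq6$ gives $\mathrm{Fix}(T^m)\neq\emptyset$; at an isolated fixed point the slice representation is $\bigoplus_{i=1}^m\mathbb{C}_{\alpha_i}$ with $\{\alpha_i\}$ a basis of the weight lattice, and the generalized Frankel theorem applied to the fixed-point sets of the circle subgroups determined by the $\alpha_i$, together with the connectedness principle, forces the family of weight systems over the fixed points to coincide with that of the linear $T^m$-action on $S^{2m}$ or on $\mathbb{C}P^m$; using $b_2(M)\leq1$ to separate these two cases and to pin down the remaining Betti numbers via Poincar\'e duality, $M^n$ has the integral cohomology ring of $S^{2m}$ or of $\mathbb{C}P^m$, and the classification of simply connected manifolds of this type admitting such torus actions yields $M^n\cong S^n$ (diffeomorphism) or $M^n\approx\mathbb{C}P^m$ (homeomorphism).

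I expect the rigidity step for even $n$ to be the crux. Since the generalized connectedness principle is one degree weaker than Wilking's lemma, the inductive reconstruction of the fixed-point data and of the cohomology is substantially tighter than in \cite{maxsymrank}; moreover the positive-Euler-characteristic input is itself obtained by iterating the generalized Weinstein fixed-point theorem down a chain of totally geodesic fixed-point sets, and this requires every link of the chain to lie in the dimension range where that theorem is available --- which fails at the $4$-dimensional fixed-point set that arises when $n=6$, and is the source of the $n\neq6$ hypothesis. A secondary difficulty, already present in the symmetry rank inequality, is that the weaker Frankel condition $\dim N_1+\dim N_2\geq n+1$ (rather than $\geq n$) admits weight configurations at fixed points with no analogue under $\sec>0$; discarding them is precisely where simple connectivity, and in the even case the hypothesis $b_2(M)\leq1$, enter.
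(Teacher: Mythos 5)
Your plan for the odd-dimensional rigidity is essentially the Grove--Searle argument itself: build a descending chain of codimension-two fixed-point sets and realize $M$, step by step, as a union of disk bundles over consecutive members. But the disk-bundle decomposition in \cite{maxsymrank} is not a consequence of the isotropy rank lemma or the connectedness principle---it comes from Alexandrov geometry of the orbit space (concavity of the distance function from the boundary of a nonnegatively curved Alexandrov space), and that machinery is lost once sectional curvatures are allowed to be negative, as they are under $\Ric_2>0$. There is no replacement for it in your proposal, so the step ``the complement of consecutive members is equivariantly a disk bundle'' would fail. The paper sidesteps this entirely: it proves only that a codimension-two fixed-point set $N^{n-2}$ exists with the quotient torus acting effectively (Lemma~\ref{lem:codim2}), uses the connectedness principle to deduce $M^n$ is a homotopy sphere from the inductively-known $N^{n-2}\cong S^{n-2}$, and then invokes a purely differential-topological theorem of Montgomery and Yang \cite{MontgomeryYang}---a smooth circle action on a homotopy sphere with simply connected codimension-two fixed-point set is linear on the standard sphere---to get the diffeomorphism. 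That input has no counterpart in your proposal, and without it (or the disk-bundle picture) you only reach homotopy equivalence.

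Your even-dimensional strategy via the slice representation at an isolated fixed point and a generalized Frankel theorem for weight circles is a genuinely different route from the paper's, but it is also underdeveloped at exactly the point where it matters. The paper does not analyze weights at all: it builds a chain $M^4\subset M^6\subset\cdots\subset M^n$ of codimension-two fixed-point components with each inclusion $(2i-1)$-connected (Lemma~\ref{lem:evenmaxsymrank}), reads off $b_2(M^4)=b_2(M^n)\le 1$, invokes Orlik--Raymond to pin $M^4$ to $S^4$ or $\mathbb{C}\mathrm{P}^2$, and then propagates up using Montgomery--Yang (sphere case) or Fang--Rong's homeomorphism rigidity for $\mathbb{C}\mathrm{P}^n$ \cite{FangRong2004} (CP case). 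Your diagnosis of the $n\neq 6$ exclusion is also off: the positive Euler characteristic input is not ``iterating the Weinstein theorem,'' it is Theorem~\ref{thm:Ric2evenEuler}, and the reason $n=6$ is excluded is not a technical gap in that theorem but the honest counterexample $S^3\times S^3$ with $\Ric_2>0$ and maximal symmetry rank (Example~\ref{ex:S3xS3}); the dimension-$4$ case is handled with no curvature input at all via Orlik--Raymond. Finally, your induction for the symmetry rank bound glosses over both effectiveness of the quotient action on the fixed-point set (which requires the argument in Proposition~\ref{prop:mincodim} and, in low dimensions, the Pak--Parker input of Lemma~\ref{lem:Riccohom2}) and the failure of $\Ric_2$ to be meaningful on fixed-point sets of dimension $\le 2$, which is where the paper's base cases live.
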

	
	The dimension $n=4$ case in Theorem \ref{result:main} follows from purely topological considerations; see Corollary \ref{cor:dim4} below.
	For even dimensions $n\geq 8$, the tools developed below (Proposition \ref{result:torusfixedpt} and Theorem \ref{result:connprinc}) allow us to show that the odd degree integral cohomology groups are trivial (see Theorem \ref{result:even}), and the even degree cohomology is periodic, in the sense that $H^2(M^n;\mathbb{Z}) \cong H^4(M^n;\mathbb{Z}) \cong H^6(M^n;\mathbb{Z}) \cong \cdots \cong H^{n-2}(M^n;\mathbb{Z})$.
	This is where the assumption that $b_2(M^n)\leq 1$ allows us to establish a classification.
	In dimension $6$, these tools do not guarantee similar restrictions on the cohomology groups, which is why the $n=6$ case is excluded in Theorem \ref{result:main}.
	Furthermore, $S^3\times S^3$ admits a metric with $\Ric_2>0$ and maximal symmetry rank; see Example \ref{ex:S3xS3} for more information.
	Thus, we cannot hope to have the same conclusion as Theorem \ref{result:main} for a classification of $6$-dimensional manifolds with $\Ric_2>0$ and maximal symmetry rank.
	
	We note that Alexandrov geometry is a commonly used tool when studying isometric group actions on manifolds with positive sectional curvature. 
	For example, Grove and Searle use the concavity of the distance functions from the boundaries of positively curved orbit spaces to establish their diffeomorphism classification in \cite{maxsymrank}.
	However, because sectional curvatures are allowed to be negative when $\Ric_2>0$, Alexandrov geometry is far less useful for our purposes.
	Instead, we rely on a generalization of Wilking's connectedness principle (Theorem \ref{result:connprinc} below) and topological results by Montgomery and Yang \cite{MontgomeryYang} and Fang and Rong \cite{FangRong2004}.

%	By work of Masuda \cite{Masuda}, in dimension $6$, if $b_2(M^6) = 1$, then $M^6$ is diffeomorphic to $\mathbb{C}\mathrm{P}^3$.
	
%	\begin{result}
%		Let $M^n$ be a closed, simply connected, $n$-dimensional Riemannian manifold with $\Ric_2>0$.
%		If an $r$-dimensional torus $T^r$ acts isometrically and effectively on $M^n$, then $r\leq \lfloor \frac{n+1}{2} \rfloor$.
%		Furthermore, if $r=\lfloor \frac{n+1}{2} \rfloor$ and 
%		\begin{enumerate}
%			\item if $n$ is odd, or
%			
%			\item if $n$ is even, $n\geq 8$, and the second Betti number of $M^n$ is zero,
%		\end{enumerate} 
%		then $M^n$ with the $T^r$-action is equivariantly diffeomorphic to a sphere with a linear $T^r$-action.
%	\end{result}
	
	If we weaken the symmetry rank assumption in Theorem \ref{result:main}, we show that we can still obtain classifications.
	Specifically, in odd dimensions, we prove the following:

	\begin{result}\label{result:odd}
		Let $M^n$ be a closed, simply connected Riemannian manifold of odd dimension $n\geq 7$ with $\Ric_2>0$.
		Suppose a torus $T^r$ of rank $r\geq\tfrac{3n+10}{8}$ acts  effectively and by isometries on $M^n$.
		Then $M^n$ is homeomorphic to a sphere.
	\end{result}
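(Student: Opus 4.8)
The plan is to prove that $M^n$ has the integral homology of an $n$-sphere; since $M^n$ is simply connected and $n \ge 7$, the solution of the generalized Poincar\'e conjecture then gives that $M^n$ is homeomorphic to $S^n$. I would argue by induction on the odd integer $n$. When $n \in \{7, 9, 11, 13\}$ the hypothesis $r \ge \tfrac{3n+10}{8}$ already forces $r = \lfloor\tfrac{n+1}{2}\rfloor$, so Theorem~\ref{result:main}(1) applies and $M^n$ is in fact diffeomorphic to $S^n$; this is the base of the induction. So fix an odd $n \ge 15$, assume the theorem in all smaller odd dimensions, and, invoking Theorem~\ref{result:main}(1) once more, assume $r < \lfloor\tfrac{n+1}{2}\rfloor$.

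The inductive step runs on the interplay of Proposition~\ref{result:torusfixedpt} and Theorem~\ref{result:connprinc}. First I would use Proposition~\ref{result:torusfixedpt}, together with the isotropy-rank information it provides, to produce a point $x \in M^n$ with large isotropy torus, equivalently a nontrivial subtorus $T' \subseteq T^r$ with a component $N$ of its fixed point set $M^{T'}$ through $x$. Such an $N$ is closed and totally geodesic, hence inherits $\Ric_2 > 0$; its codimension in $M^n$ is even, so $\dim N$ is odd; and $T^r/T'$ acts effectively and isometrically on $N$. To size $N$ one analyzes the isotropy representation of $T^r$ at $x$: its nontrivial irreducible summands are two-dimensional, with weights $\alpha_1, \dots, \alpha_m$ in the character lattice $\operatorname{Hom}(T^r, S^1) \cong \mathbb{Z}^r$, where $2m$ is essentially $\dim M^n$ minus the dimension of the $T^r$-fixed subspace at $x$, and effectiveness forces the $\alpha_i$ to span $\mathbb{R}^r$. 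A pigeonhole argument then yields a subtorus $T'$ annihilating as many of the $\alpha_i$ as the rank permits, so that $\operatorname{codim}_M N$ is roughly $2(m - r + \dim T')$ while the induced action on $N$ has rank at least $r - \dim T'$.

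Theorem~\ref{result:connprinc} now makes $N \hookrightarrow M^n$ highly connected --- of connectivity roughly $\dim N - \operatorname{codim}_M N$, with the improvement from the residual torus acting on the normal bundle of $N$ already built in --- and, in its periodicity form, identifies the cohomology groups of $M^n$ in a middle range via cup product with a class in $H^{\operatorname{codim}_M N}(M^n;\mathbb{Z})$. If $T'$ can be chosen so that this connectivity is at least $\tfrac{n-1}{2}$ and at least $2$, and so that the residual rank on $N$ meets the threshold $\tfrac{3\dim N+10}{8}$, then $N$ is simply connected and, by the inductive hypothesis (or, when $\dim N \le 13$, by Theorem~\ref{result:main}(1)), a homology $(\dim N)$-sphere; transporting this back through the connectivity estimate and Poincar\'e duality on $M^n$ forces $\widetilde H_*(M^n;\mathbb{Z}) \cong \widetilde H_*(S^n;\mathbb{Z})$, completing the induction. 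When a single $N$ does not reach past the middle dimension, I would iterate the construction along a flag $M^n \supset N_1 \supset N_2 \supset \cdots$ of totally geodesic submanifolds, telescoping the connectivity and periodicity estimates and stopping at the first $N_j$ to which the inductive hypothesis applies.

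The heart of the matter --- and the origin of the constant $3/8$ --- is the simultaneous bookkeeping just indicated: one must choose $T'$ (and, along the flag, how far to descend) so that the connectivity of $N_j \hookrightarrow M^n$ stays above $\tfrac{n-1}{2}$ while the symmetry rank surviving on $N_j$ stays above $\tfrac{3\dim N_j+10}{8}$, and one must verify that both budgets can be met exactly when $r \ge \tfrac{3n+10}{8}$, watching parity and the borderline dimensions. I expect this, rather than any single geometric input, to be the main obstacle. Two genuinely odd-dimensional complications also need care, neither present in the maximal-rank setting of Grove--Searle: the $T^r$-action on $M^n$ need not have a global fixed point and some circle subactions may act freely, so one really uses the isotropy-rank content of Proposition~\ref{result:torusfixedpt} rather than a fixed-point statement; and one must check that the generalized connectedness and periodicity principles of Theorem~\ref{result:connprinc} concede at most the expected one unit of connectivity relative to Wilking's originals, so that the symmetry-improved estimates remain strong enough to close the induction.
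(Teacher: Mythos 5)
Your overall architecture matches the paper's: induct on odd $n$, establish dimensions $7,\dots,13$ via Theorem~\ref{result:main}(1) (since there $\lceil\tfrac{3n+10}{8}\rceil = \tfrac{n+1}{2}$), and for $n\geq 15$ reduce to a totally geodesic fixed--point component $N$ of a circle subaction, apply Theorem~\ref{result:connprinc}, the inductive hypothesis, and Poincar\'e duality, and finish with Smale. That is the correct skeleton.

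However, both of the machinery pieces you flag as "the heart of the matter" --- the weight/pigeonhole analysis of the isotropy representation at $x$ and the iterated flag $M^n\supset N_1\supset N_2\supset\cdots$ --- are absent from the paper and are not needed. The paper bypasses both with a single simple choice: $N$ is taken to be the connected component of \emph{minimal codimension} among fixed--point sets of all circle subgroups of $T^r$ (guaranteed to be nonempty by Proposition~\ref{result:torusfixedpt}, which only requires $r\geq 3$; this also resolves your worry that circle subactions may be free). By Petersen's Proposition 8.3.8 (Proposition~\ref{prop:mincodim} in the paper), minimality of codimension forces $T^{r-1}=T^r/S^1$ to act almost effectively on $N$, hence $\symrank(N)\geq r-1$. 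Plugging this into the symmetry rank bound (Proposition~\ref{result:symrankbound}) applied to $N$ gives $\tfrac{\dim N+1}{2}\geq r-1\geq\tfrac{3n+10}{8}-1$, i.e.\ $\dim N\geq\tfrac{3n-2}{4}$. This one inequality does all the bookkeeping you anticipate: it automatically makes $N\hookrightarrow M^n$ at least $\tfrac{n}{2}$-connected via Theorem~\ref{result:connprinc} (so a \emph{single} reduction step suffices, no flag), and together with the crude bound $\symrank(N)\geq r-1$ it automatically pushes $\symrank(N)$ past the threshold $\tfrac{3\dim N+10}{8}$ whenever $\codim N\geq 4$. The codimension-$2$ case is handled separately via Corollary~\ref{cor:homotopysphere}. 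In short, your route would likely close with substantial extra effort, but the paper's choice of $N$ of minimal codimension, paired with Proposition~\ref{prop:mincodim}, collapses the entire rank/connectivity budget into a two-line calculation --- no isotropy-weight combinatorics and no telescoping flag are required.
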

	
	In even dimensions, we also establish the following:

	\begin{result}\label{result:even}
		Let $M^n$ be a closed, simply connected, Riemannian manifold of even dimension $n\geq 8$ with $\Ric_2>0$.
		Suppose a torus $T^r$ of rank $r\geq\tfrac{3n+6}{8}$ acts effectively and by isometries on $M^n$.
		Then $H^i(M^n;\mathbb{Z})=0$ for all odd values of $i$.
		Furthermore, if the second Betti number satisfies $b_2(M^n)\leq1$, then $M^n$ is either homeomorphic to a sphere or tangentially homotopy equivalent to a complex projective space.
	\end{result}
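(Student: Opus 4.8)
The plan is to transplant the almost-maximal-symmetry-rank program developed in positive sectional curvature (see \cite{Rong_2002,Fang_2005,FangRong2004,connprinc}) to the $\Ric_2>0$ setting, substituting Proposition~\ref{result:torusfixedpt} and Theorem~\ref{result:connprinc} for their sectional-curvature antecedents, and to argue by induction on the dimension $n$, the inductive step passing to a totally geodesic submanifold of strictly smaller dimension that still carries a large isometric torus action. Since $r\ge\tfrac{3n+6}{8}$ is well above the threshold of Proposition~\ref{result:torusfixedpt}, one first fixes a point $p$ with $T^rp=p$ and linearizes, obtaining an orthogonal $T^r$-representation on $T_pM^n\cong\mathbb{R}^n$ that decomposes into two-dimensional weight spaces $V_1,\dots,V_s$ together with a trivial summand; effectiveness forces the weights to span $\mathrm{Hom}(T^r,S^1)\otimes\mathbb{Q}$, so $r\le s\le n/2$, and the near-maximality of $r$ makes $s-r$ small. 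A combinatorial analysis of the weight configuration then yields a subtorus $K\lneq T^r$ whose fixed-point component $N\ni p$ is totally geodesic of small (controlled) codimension, with the ineffective kernel of the induced action on $N$ arranged to be no larger than necessary, so that a torus of only slightly smaller rank acts effectively on $N$; being totally geodesic, $N$ again satisfies $\Ric_2>0$. Iterating produces a chain $M^n=N_0\supset N_1\supset\cdots\supset N_\ell$ of totally geodesic submanifolds, each with an effective isometric torus action, the codimensions and rank losses being chosen so that the inductive hypothesis, or a base case, applies to $N_\ell$.

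The topological engine is the connectedness principle. Applied to each inclusion $N_{j+1}\hookrightarrow N_j$, Theorem~\ref{result:connprinc} furnishes enough connectivity to conclude that every $N_j$ is simply connected, and, combined with Poincaré duality in the standard way (the periodicity argument of \cite{connprinc}), that $H^*(N_j;\mathbb{Z})$ carries a periodicity operator over a wide range of degrees while $H^*(N_{j+1};\mathbb{Z})$ and $H^*(N_j;\mathbb{Z})$ agree through roughly half their dimensions. Starting from a base case in low dimension --- dimension $4$ being settled by purely topological means (Corollary~\ref{cor:dim4}), with the delicate intermediate dimensions, above all $6$ (where $S^3\times S^3$, see Example~\ref{ex:S3xS3}, shows the odd-degree vanishing can genuinely fail), handled separately --- and propagating the periodicity back up the chain forces $H^i(\,\cdot\,;\mathbb{Z})=0$ for odd $i$ at every level, hence $H^{\mathrm{odd}}(M^n;\mathbb{Z})=0$. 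Under the additional hypothesis $b_2(M^n)\le 1$ one gets, in the same way, that each $N_j$, and so $M^n$, has the integral cohomology ring of a sphere or of a complex projective space.

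To upgrade this cohomological rigidity to a topological statement when $b_2(M^n)\le 1$, one invokes the classification results of Montgomery--Yang \cite{MontgomeryYang} and Fang--Rong \cite{FangRong2004} for smooth torus actions on integral cohomology spheres and complex projective spaces, together with standard smoothing and surgery theory for the tangential statement, which identify $M^n$ up to homeomorphism with $S^n$ in the first case and up to tangential homotopy equivalence with $\mathbb{CP}^{n/2}$ in the second. The main obstacle throughout is quantitative tightness: Theorem~\ref{result:connprinc} loses a unit of connectivity relative to Wilking's sectional-curvature connectedness principle, so each reduction only barely retains enough connectivity --- and enough symmetry rank --- to continue, and the coefficient $\tfrac{3}{8}$ in the hypothesis is essentially what is needed to keep this bookkeeping closed while still forcing the chain to bottom out in dimensions where the periodicity argument and the explicit base cases apply. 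Checking that everything survives in the small-dimensional cases, and in particular that dimension $6$ never obstructs the induction, is the crux.
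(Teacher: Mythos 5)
Your outline shares the paper's broad architecture (induction on dimension, reduction to a totally geodesic submanifold, connectedness principle, Lemma 2.2 of Wilking for periodicity), but there are two concrete gaps, and the reduction step itself is carried out differently.

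First, the reduction mechanism. You propose to find the submanifold $N$ by a ``combinatorial analysis of the weight configuration'' of the isotropy representation at a fixed point. The paper does not do this; the weight-counting machinery developed for positive sectional curvature relies on structural facts (e.g.\ Berger's lemma, Frankel-type intersection theorems) that are unavailable or weaker under $\Ric_2>0$. Instead, the paper's Lemma~\ref{lem:NRic2even} takes the component $N=M_x^{T^1}$ of smallest codimension fixed by a circle subgroup (using the chain of Proposition~\ref{prop:chainoftori}), then bounds $\dim N$ from below by $\tfrac{3n-2}{4}$ directly from the fact that each inclusion in the chain drops dimension by at least $2$, and bounds $\symrank(N)$ from below using Proposition~\ref{prop:mincodim} together with the symmetry rank bound Proposition~\ref{result:symrankbound} applied to $N$. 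This is a quite different control device, and it is what keeps the descent from ever seeing dimension $6$: when $n\geq 14$ one has $\dim N\geq 10$, so the induction only ever passes through dimensions $\geq 10$ and then bottoms out at $n=8,10,12$ where $\lceil\tfrac{3n+6}{8}\rceil=\tfrac n2$ and the maximal-symmetry-rank theorem applies. Your plan of iterating a chain all the way down toward dimension $4$ is the strategy behind Lemma~\ref{lem:evenmaxsymrank} in the \emph{maximal}-rank case, not the $\tfrac34$-maximal case; as written, you have not shown how your chain avoids dimension $6$, you have only observed that it must.

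Second, and more seriously, the tangential homotopy equivalence. You write that once $M^n$ is shown to have the integral cohomology ring of $\mathbb{C}\mathrm{P}^{n/2}$, ``standard smoothing and surgery theory'' yields the tangential statement. That is not so: a homotopy $\mathbb{C}\mathrm{P}^m$ need not be tangentially homotopy equivalent to $\mathbb{C}\mathrm{P}^m$, and no amount of general surgery theory will bridge this without extra input. The paper's essential ingredient here is the theorem of Dessai and Wilking \cite{DessaiWilking}: a homotopy $\mathbb{C}\mathrm{P}^m$ admitting a smooth effective $T^r$-action with $m<4r-1$ is tangentially homotopy equivalent to $\mathbb{C}\mathrm{P}^m$, and the hypothesis $r\geq\tfrac{3n+6}{8}$ is exactly what makes $n/2<4r-1$ hold. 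Montgomery--Yang and Fang--Rong, which you cite for this step, require codimension-two fixed point sets and are used in the paper only in the maximal-symmetry-rank classification (Theorems~\ref{thm:oddmaxsymrank} and~\ref{thm:evenmaxsymrank}); they do not apply to the present reduction when $\codim N\geq 4$. Without replacing your smoothing/surgery appeal by Dessai--Wilking (or an equivalent result that uses the torus action), the $\mathbb{C}\mathrm{P}^{n/2}$ case of the conclusion is not established.
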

	
	Recall that manifolds $M$ and $N$ are said to be tangentially homotopy equivalent if there is a homotopy equivalence $h:M\to N$ such that the pullback bundle $h^*TN$ is stably isomorphic to the tangent bundle $TM$.
	Equivalently, $M$ is tangentially homotopy equivalent to $N$ if there exists $l\geq 1$ such that $M\times \mathbb{R}^l$ is diffeomorphic to $N\times \mathbb{R}^l$.
	We use a result of Dessai and Wilking \cite{DessaiWilking} to establish the complex projective space case of the classification in Theorem \ref{result:even}.
	
%	Note that in \cite{localsymrank}, the author proves that any connected manifold with $\Ric_2>0$ at a point has symmetry rank at most $\lfloor\frac{n+2}{2}\rfloor$. 
%	For odd values of $n$, we have $\lfloor\frac{n+2}{2}\rfloor=\frac{n+1}{2}$. 
%	Thus, Theorem \ref{result:odd} is a classification of odd-dimensional manifolds of $\Ric_2>0$ with maximal or approximately three-quarter-maximal symmetry rank.
	
	Hopf famously conjectured that closed, even-dimensional manifolds with positive sectional curvature must have positive Euler characteristic. 
	In an effort to make progress toward this conjecture, many have shown that even-dimensional manifolds with positive sectional curvature and large symmetry rank must have positive Euler characteristic; see, for example, \cite{puttmannsearle}, \cite{Rong_2002}, \cite{hopfconjwithsymm}, \cite{KennardWiemelerWilking}.
	We establish a similar result for manifolds with $\Ric_2>0$:

	\begin{result}\label{result:evenEuler}
		Suppose $M^n$ is a closed Riemannian manifold of even dimension $n\geq 8$ with $\Ric_2>0$.  
		If a torus $T^r$ of rank $r\geq \frac{n}{4}+2$ acts effectively and by isometries on $M^n$, then $\chi(M^n)>0$.
	\end{result}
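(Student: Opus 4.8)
The plan is to follow the strategy P\"uttmann and Searle used for positive sectional curvature, supplying the $\Ric_2>0$ analogues of the connectedness principle and of Frankel's theorem. Since $\Ric_2>0$ implies $\Ric>0$, Bonnet--Myers gives $|\pi_1(M^n)|<\infty$; as $\chi$ is multiplicative under finite covers and the $T^r$-action lifts to an effective isometric action of a rank-$r$ torus on the universal cover, we may assume $M^n$ is simply connected. We then induct on the (even) dimension $n$. For the base cases $n\in\{8,10\}$ one checks that $r\geq\frac n4+2$ forces $r\geq\frac{3n+6}{8}$, so Theorem \ref{result:even} applies and gives $H^i(M^n;\mathbb{Z})=0$ for all odd $i$; hence $\chi(M^n)=\sum_i(-1)^ib_i(M^n)=\sum_{i\text{ even}}b_i(M^n)\geq b_0(M^n)=1>0$.

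For the inductive step we use the classical fact that $\chi(M^n)=\chi(M^{S^1})$ for any circle $S^1\subseteq T^r$, together with the fact that each component of $M^{S^1}$ is a closed totally geodesic submanifold of even codimension which again carries $\Ric_2>0$. Applying Proposition \ref{result:torusfixedpt} to produce a $T^r$-fixed point $p$ and analysing the isotropy weights of $T^r$ on $T_pM^n$ (which span the dual Lie algebra, by effectiveness), one selects $S^1$ --- inside the common kernel of a suitable subcollection of the weights --- so that the component $F$ of $M^{S^1}$ through $p$ has small codimension. The Frankel-type consequence of Theorem \ref{result:connprinc} then bounds the dimension of every other component of $M^{S^1}$ by $\dim M^n-2-\dim F$; when $\codim F$ is small enough this is at most $4$, and a closed manifold of dimension $0$, $2$, or $4$ with $\Ric>0$ has $\chi\geq1$ (Bonnet--Myers in dimension $2$; the Bochner technique and Poincar\'e duality otherwise). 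Hence $\chi(M^n)\geq\chi(F)$, and it remains to show $\chi(F)>0$.

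The submanifold $F$ is closed, totally geodesic, carries $\Ric_2>0$, and --- by the high connectivity of $F\hookrightarrow M^n$ furnished by Theorem \ref{result:connprinc} --- is simply connected; moreover $T^r$, modulo its ineffective kernel on $F$ (which embeds in the rotation group of a normal fibre), induces an effective isometric action on $F$ of rank at least $r-\tfrac12\codim F$. One then verifies that this rank meets the threshold $\tfrac{\dim F}{4}+2$ of the induction hypothesis, using the integrality of ranks, the slack that $\tfrac n4+2$ leaves over $\tfrac{\dim F}{4}+2$, and --- in the borderline case $\codim F=2$ --- the sharper relation $\chi(M^n)=\chi(F)+b_2(F)$ extracted from the connectedness principle; should the induced rank still fall short for some $F$, one iterates the construction inside $F$, reducing the codimension at each stage until the dimension enters the range $\leq 10$ covered by the base case or by the automatic estimate above. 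Invoking the induction hypothesis on $F$ then closes the step.

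I expect the main obstacle to be precisely this last bookkeeping: ensuring that one can always choose a circle whose fixed-point set carries a component $F$ with small enough codimension that (i) the remaining components contribute non-negatively to $\chi(M^n)$ and (ii) the torus inherited by $F$ still has rank at least $\tfrac{\dim F}{4}+2$ after rounding. In positive sectional curvature one has the full strength of Wilking's connectedness lemma and of Frankel's theorem, whereas the $\Ric_2$-versions provided by Theorem \ref{result:connprinc} are weaker by a bounded amount; reconciling this loss with the rank count is what forces the constant $2$ in $r\geq\frac n4+2$ rather than a smaller constant. A secondary subtlety is dimension $6$: a totally geodesic $6$-manifold with $\Ric_2>0$ need not have positive Euler characteristic (witness the metric on $S^3\times S^3$ of Example \ref{ex:S3xS3}), so the circle must be chosen so that no $6$-dimensional component ever plays the role of $F$; this is harmless, since the base case already settles dimension $8$, so $\dim F\geq 8$ whenever the induction is genuinely invoked.
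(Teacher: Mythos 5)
The proposal does not match the paper's proof and contains several genuine gaps. The most serious is the assumption that $T^r$ has a fixed point in $M^n$: Proposition~\ref{result:torusfixedpt} only produces a fixed point for a $T^{r-2}$-subtorus, and in fact a full $T^r$-fixed point cannot be assumed here --- by Conner's lemma $\chi(M)=\chi(M^{T^r})$, so a $T^r$-fixed point is essentially the conclusion you are trying to prove, not something you may take as given. Related to this, the isotropy-weight selection step is not viable as described: even at a genuine $T^r$-fixed point $p$, finding a circle $S^1\subset T^r$ whose fixed-point component through $p$ has codimension $2$ requires some isotropy weight to lie outside the span of the others, and with up to $n/2$ nonzero weights versus rank $r\approx n/4$ this typically fails. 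Finally, the ``Frankel-type consequence'' of the connectedness principle that you invoke to cap the dimensions of the other $S^1$-fixed components is neither stated nor established in the paper, and the inductive rank bookkeeping $\symrank(F)\geq\frac{\dim F}{4}+2$ is, as you yourself note, unresolved.

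The paper avoids all three problems by a different route. It applies Proposition~\ref{result:torusfixedpt} to obtain a $T^{r-2}$-fixed point and reduces via Conner's lemma to showing $\chi(M^{T^{r-2}})>0$, component by component. It then builds a chain of fixed-point components $M_x^{T^{r-2}}\subset\cdots\subset M_x^{T^1}\subset M$ (Proposition~\ref{prop:chainoftori}), and a simple dimension count (Lemma~\ref{lem:chainfixedptsets}) shows that either $M_x^{T^{r-2}}$ is $0$-dimensional or some step in this chain is a codimension-$2$ inclusion; the hypothesis $r\geq\frac n4+2$ enters exactly to rule out the alternative that every step has codimension $\geq4$. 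For that codimension-$2$ step, Theorem~\ref{result:connprinc} gives $(m-3)$-connectedness, Lemma~\ref{lem:periodic} (Poincar\'e duality) then kills all odd Betti numbers of the ambient component, and a further application of Conner's lemma passes this positivity down to $M_x^{T^{r-2}}$. No induction on dimension, no weight analysis, and no Frankel-type lemma is needed.
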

	
	The tools we develop to establish the results above should prove to be helpful in studying general isometric group actions on closed manifolds with positive $k^\mathrm{th}$-intermediate Ricci curvature for any $k\geq 2$.
	
	\begin{definition}
		We say an $n$-dimensional Riemannian manifold $(M^n,g)$ has \textit{positive $k^\text{th}$-intermediate Ricci curvature} for $k\in\{1,\dots,n-1\}$ if, for any choice of orthonormal vectors $\{u,e_1,\dots,e_k\}$, the sum of sectional curvatures $\sum_{i=1}^k \sec(u,e_i)$ is positive. \footnote{Positive $k^\mathrm{th}$-intermediate Ricci curvature should not be confused with $k$-positive Ricci curvature; see, for example, \cite{CrowleyWraith} and the references therein.}
		We abbreviate this condition by writing $\Ric_k(M^n,g)>0$, omitting $M^n$ or $g$ when they are understood.
	\end{definition}
	
	Notice that $\Ric_1>0$ is equivalent to positive sectional curvature, and $\Ric_{n-1}>0$ is equivalent to positive Ricci curvature.
	Furthermore, if $\Ric_k>0$, then $\Ric_l>0$ for all $l\geq k$. 
	Thus $\Ric_2>0$ is a strong condition on curvature in this hierarchy, second only to positive sectional curvature.
	
	See Section \ref{sec:examples} for basic examples of manifolds with $\Ric_k>0$.
	For the classification of compact symmetric spaces by the minimal value of $k$ for which each has $\Ric_k>0$, see \cite{AmannQuastZarei} or \cite{DavidLawrenceMiguel}.
	We note that the only compact irreducible symmetric spaces that have $\Ric_2>0$ are those of rank $1$, i.e. those that have positive sectional curvature.
	For more constructions of $\Ric_k>0$, see Theorems E, F, and G in \cite{DavidLawrenceMiguel}.
	
	The first tool we establish in this article guarantees fixed point sets of torus actions on closed manifolds with $\Ric_k>0$:

%	In this article, we introduce two new tools for studying isometric actions on closed manifolds with $\Ric_k>0$. The first main tool guarantees fixed point sets of torus actions:
	
	\begin{propositionresult}\label{result:torusfixedpt}
		Suppose $M^n$ is a closed $n$-dimensional Riemannian manifold with $\Ric_k>0$ for some $k\in\{2,\dots,n-1\}$. 
		If a torus $T^r$ of rank $r\geq k+1$ acts by isometries on $M^n$, then there is a codimension-$k$ torus subgroup $T^{r-k}\subset T^r$ such that the $T^{r-k}$-action on $M^n$ has a fixed point.
	\end{propositionresult}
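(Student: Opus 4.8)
The plan is to rephrase the conclusion and then induct on $n=\dim M$. Since $(T^r)_p$ is a closed subgroup of a torus, its identity component is a subtorus of dimension $\rank(T^r)_p=r-\dim(T^r\cdot p)$; thus a point whose orbit has dimension $\le k$ has isotropy containing an $(r-k)$-dimensional subtorus, and conversely a codimension-$k$ subtorus with a fixed point produces such an orbit. So it suffices to show that a closed manifold with $\Ric_k>0$ and an isometric $T^r$-action, $r\ge k+1$, has an orbit of dimension at most $k$. One may first take $M$ connected and the action effective (the ineffective kernel, if of dimension $\ge r-k$, contains a suitable subtorus; otherwise the effective quotient has rank $\ge k+1$ and its codimension-$k$ subtori pull back). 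The induction then examines an orbit $T^r\cdot p$ of minimal dimension $d$: if $d\le k$ there is nothing to prove, so the content lies in the range $k+1\le d\le r$.

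If $d<r$, I would set $H=\big((T^r)_p\big)_0$, a subtorus of dimension $r-d\ge1$, and pass to the component $F$ of the fixed point set $M^H$ through $p$. This $F$ is closed and totally geodesic, $T^r$-invariant since $T^r$ is connected, of dimension $<n$ by effectiveness, and of dimension $\ge d\ge k+1$ as it contains the orbit of $p$; hence $\Ric_k(F)>0$. The $T^r$-action on $F$ factors through an action of $T^r/H\cong T^d$ of rank $d\ge k+1$, so the inductive hypothesis gives a codimension-$k$ subtorus $\bar T\subset T^d$ with a fixed point $q\in F$. Its full preimage in $T^r$ is a subtorus of dimension $(d-k)+(r-d)=r-k$ fixing $q$, so $\dim(T^r\cdot q)\le k$, contradicting the minimality of $d$. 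Hence $d=r$, i.e.\ the action is almost free, and the Proposition reduces to excluding this.

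The heart of the argument --- and the step I expect to be the main obstacle --- is thus to show that $T^{k+1}$ cannot act almost freely and isometrically on a closed $M^n$ with $\Ric_k>0$ (an almost free $T^r$ with $r\ge k+1$ restricts to one, since isotropy groups only shrink). Here I would use Wilhelm's generalization of Berger's theorem \cite{wilhelminterm}: a closed manifold with $\Ric_k>0$ and $n-k$ odd admits no nowhere-vanishing Killing field. When $n-k$ is odd this applies directly to the Killing field of any circle in $T^{k+1}$, which must then have a fixed point, contradicting almost-freeness. When $n-k$ is even, I would choose a circle $S^1\subset T^{k+1}$ acting freely (possible: only the circles passing through one of the finitely many nontrivial isotropy elements fail to act freely) and pass to the closed quotient manifold $M^\ast=M/S^1$ of dimension $n-1$; by O'Neill's formula sectional curvature does not decrease along horizontal $2$-planes of the submersion $M\to M^\ast$, so $\Ric_k(M^\ast)>0$, the residual $T^{k+1}/S^1\cong T^k$-action on $M^\ast$ is again almost free, hence $M^\ast$ carries a nowhere-vanishing Killing field --- and since $(n-1)-k$ is odd this contradicts the same theorem applied to $M^\ast$.

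In short, everything outside the almost-free case is bookkeeping around a minimal orbit and its fixed-point submanifold, while the real input sits in the almost-free case, where the intermediate Ricci bound enters through the generalized Berger theorem (and, for $n-k$ even, through the O'Neill formula); this is the direct analogue of the classical fact that a $2$-torus cannot act almost freely by isometries on a closed positively curved manifold. The subsidiary point to watch throughout is that each reduction --- restriction to $F$, or passage to $M/S^1$ --- leaves a torus of rank $\ge k+1$, so that the inductive hypothesis and the Berger input stay applicable.
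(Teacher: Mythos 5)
Your reduction to the almost-free case (via a minimal orbit, its isotropy subtorus $H$, and the fixed-point component $F=M^H$) is sound bookkeeping and plays the same structural role as the inductive step in the paper's proof; the paper reaches the same base case by inducting on the rank $r$ rather than on $n$. The fatal gap is in the almost-free case itself. The fact you attribute to Wilhelm, that a closed manifold with $\Ric_k>0$ and $n-k$ odd admits no nowhere-vanishing Killing field, is false for $k\geq2$: the round sphere $S^n$ with $n$ odd has $\sec\equiv 1$, hence $\Ric_2>0$, and $n-2$ is odd, yet the Hopf field is a nowhere-vanishing Killing field. Structurally, a Berger-type argument for a \emph{single} Killing field $X$ at a minimum of $\tfrac12|X|^2$ only yields $X\in\ker\nabla X$ together with the parity constraint that $\dim\ker\nabla X\equiv n\pmod 2$ (so $\geq 2$ when $n$ is even); it never gives $\dim\ker\nabla X\geq k+1$, which is exactly what the second-variation argument for $\Ric_k$ requires. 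This is precisely why the paper does not argue one Killing field at a time: Proposition~\ref{prop:lindep} runs Sugahara's variant on all $k+1$ commuting Killing fields simultaneously, minimizing $\tfrac12|X^\perp|^2$ rather than $\tfrac12|X|^2$, and using $X\notin\Image\nabla X+\mathcal Y$ to force $\ker(P^\perp\circ\nabla X)$ to have dimension $\geq k+1$ with no parity hypothesis at all, so that no case split and no quotient is needed.

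There is also a secondary hole in your fallback for $n-k$ even: an almost free $T^{k+1}$-action need not admit a circle subgroup acting freely. If some point has isotropy $(\mathbb Z_2)^{k+1}$, which is compatible with almost-freeness once $n\geq 2k+2$, then every circle subgroup $S^1\subset T^{k+1}$ contains its nontrivial $2$-torsion element (the image of $t=\tfrac12$), so none acts freely and $M/S^1$ is only an orbifold; one would then need an orbifold version of the fixed-point theorem, an extra and nontrivial step. This is moot, however, in view of the first issue.
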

	
	Berger proved in \cite{BergerKilling} that any Killing field on a closed, even-dimensional manifold with positive sectional curvature must have a zero. 
	It follows that an isometric circle action on such a manifold must have a fixed point. 
	Sugahara in \cite{sugahara} extended Berger's argument to prove that on a closed manifold of any dimension with positive sectional curvature, any two commuting Killing fields  must be linearly dependent at some point. 
	It follows that any isometric torus action on such a manifold must have a circle sub-action with non-empty fixed point set, a result that was established independently by Grove and Searle in \cite{maxsymrank}. 
	Grove refers to these collective results as the Isotropy Rank Lemma in \cite{geomofandviasym}.
	To prove Proposition \ref{result:torusfixedpt}, we use a generalization of Sugahara's approach; see Proposition \ref{prop:lindep} below.

%	Now, recall that given an isometric action by a Lie group $G$ on a manifold $M$, the \textit{cohomogeneity} of action is the dimension of the orbit space $M/G$. Equivalently, the cohomogeneity is codimension of the principal orbits in $M$. We denote the cohomogeneity of the $G$-action on $M$ by $\cohom(M,G)$. P\"uttmann proved that if a Lie group $G$ acts isometrically on a closed, positively curved manifold $M$ with principal isotropy subgroup $H\subset G$, then $\rank(G)-\rank(H)\leq \cohom(M,G)+1$ \cite{homrank}. As consequence of Theorem \ref{result:torusfixedpt}, we generalize P\"uttmann's result to manifolds with positive intermediate Ricci curvature as follows:
%	
%	\begin{corollaryresult}\label{cor:cohom}
%		Suppose $(M,g)$ is a closed Riemannian manifold with $\Ric_k(M,g)>0$. 
%		If a Lie group $G$ acts isometrically on $(M,g)$ with principal isotropy subgroup $H\subset G$, then 
%		\[
%			\rank(G)-\rank(H)\leq \cohom(M,G)+k.
%		\]
%	\end{corollaryresult}
	
	As mentioned above, Grove and Searle proved in \cite{maxsymrank} that any closed, connected $n$-dimensional manifold with positive sectional curvature cannot have symmetry rank greater than $\lfloor\frac{n+1}{2}\rfloor$.
	The author proved in \cite{localsymrank} that any connected $n$-manifold with $\Ric_k>0$ at a point cannot have symmetry rank greater than $\lfloor\frac{n+k}{2}\rfloor$, without assuming that the manifold is closed. 
	For our second new tool, we apply Proposition \ref{result:torusfixedpt} to obtain the following refined symmetry rank bounds for closed manifolds with $\Ric_k>0$:
	
	\begin{propositionresult}\label{result:symrankbound}
		Suppose $(M^n,g)$ is a closed, connected, $n$-dimensional Riemannian manifold with $\Ric_k>0$ for some $k\in\{3,\dots,n-1\}$. 
		Then 
		\[
			\symrank(M^n,g)\leq \left\lfloor\frac{n+k}{2}\right\rfloor-1.
		\]
	\end{propositionresult}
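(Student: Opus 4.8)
The plan is to argue by contradiction, using the bound $\symrank(M^n,g)\le\lfloor\tfrac{n+k}{2}\rfloor$ from \cite{localsymrank} as the starting point: suppose a torus $T^r$ of rank $r=\lfloor\tfrac{n+k}{2}\rfloor$ acts effectively and by isometries on $M^n$, and aim for a contradiction. I would first dispose of the extremal case $n=k+1$, where $r=k$ and $\Ric_k=\Ric_{n-1}>0$ is just positive Ricci curvature. Passing to the universal cover — a finite cover by Bonnet--Myers, still with $\Ric_k>0$, hence still of symmetry rank $k$ by \cite{localsymrank}, so the lifted torus action has rank $k$ — gives an effective, isometric, cohomogeneity-one action of $T^k$ on a closed, simply connected manifold of dimension $k+1\ge 4$. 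The standard fundamental-group computation for cohomogeneity-one manifolds then forces a contradiction: the group being a torus, the singular isotropy groups are abelian and the normal spheres $H_\pm/H$ have dimension $\le 1$, so $\pi_1$ of the cover is a quotient of $\pi_1(T^k)=\mathbb Z^k$ by the normal closure of at most two circles, hence has rank $\ge k-2\ge 1$.

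Assume now $n\ge k+2$, so $r\ge k+1$, and apply Proposition \ref{result:torusfixedpt} to obtain a codimension-$k$ subtorus $T^{r-k}\subset T^r$ fixing a point $p$; let $F$ be the component of $\mathrm{Fix}(T^{r-k})$ through $p$. Then $F$ is closed and totally geodesic, $T^r$ preserves $F$ (being connected it permutes the components of $\mathrm{Fix}(T^{r-k})$ trivially), and $c:=\codim F$ is even with $c\ge 2(r-k)$, since the normal isotropy representation of $T^{r-k}$ at $p$ is faithful with no trivial summand. Letting $\widetilde K\supseteq T^{r-k}$ be the ineffective kernel of the $T^r$-action on $F$, faithfulness of $\widetilde K$ on $\nu_pF\cong\mathbb R^c$ gives $\dim\widetilde K\le c/2$, hence $\symrank(F)\ge r-c/2$; with $\symrank(F)\le\dim F=n-c$ this forces $c\le 2(n-r)$. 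Since $2(n-r)-2(r-k)=2\bigl((n+k)-2\lfloor\tfrac{n+k}{2}\rfloor\bigr)\in\{0,2\}$, the even number $c$ is essentially pinned down, and one computes $\dim F\in\{k-1,k,k+1\}$, with $\dim F=k+1$ possible only when $n+k$ is odd. If $\dim F=k+1$, then $F$ has positive Ricci curvature while, unwinding the inequalities, $\symrank(F)\ge k>k-1=\lfloor\tfrac{(k+1)+k}{2}\rfloor-1$, contradicting the extremal case applied to $F$. Hence $\dim F\in\{k-1,k\}$, and then all the inequalities above become equalities: $\dim\widetilde K=c/2$ and $\symrank(F)=\dim F\ge 2$. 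Since a closed connected manifold carrying an effective torus action of rank equal to its dimension is a torus acted on by translations, $F$ is a flat, totally geodesic torus $T^{\dim F}\subset M^n$; moreover $\widetilde K$ is a maximal torus of $\mathrm{SO}(\nu_pF)$, and because the normal connection of $F$ is $T^r$-invariant — so its connection form is valued in the abelian centralizer of $\widetilde K$ and is constant-coefficient along the translation action on $F$ — it is flat.

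The remaining task, which I expect to be the main obstacle, is to show that a closed manifold with $\Ric_k>0$ contains no flat totally geodesic torus of dimension $k$ or $k-1$; this is exactly where $k\ge3$ is needed, since for $k\le 2$ the torus $F$ could be a single closed geodesic, which exists on every closed manifold. The plan here is a second-variation argument. Take a closed geodesic $\gamma\subset F$; flatness of $F$ together with total geodesy makes $R_M(\,\cdot\,,\gamma')\gamma'$ vanish on $TF$, so $\gamma$ carries $\dim F-1$ globally parallel orthonormal vector fields tangent to $F$ and perpendicular to $\gamma'$, all lying in the null space of the index form of $\gamma$; flatness of the normal connection provides, after replacing $\gamma$ by a sufficiently high iterate, a periodic unit normal field $W$ with $\int|\nabla_{\gamma'}W|^2$ as small as we like. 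Feeding $\Ric_k>0$ into the index form with $u=\gamma'$ and $k$ orthonormal vectors taken from the parallel tangent fields together with $W$ forces $\sec_M(W,\gamma')>0$, bounded below along $\gamma$ — directly when $\dim F=k$, and using the $\widetilde K$-invariance of the normal curvature (which diagonalizes it over the normal $2$-planes with positive eigenvalues) when $\dim F=k-1$. Hence $I(W,W)=\int|\nabla_{\gamma'}W|^2-\int\sec_M(W,\gamma')<0$ for the chosen iterate, producing a closed geodesic of positive index whose null space has dimension at least $\dim F-1\ge1$. The concluding step — deriving an outright contradiction from such a geodesic under $\Ric_k>0$ — I expect to follow either from a Synge--Weinstein-type argument in the spirit of \cite{wilhelminterm} together with the focal-radius estimates of \cite{focalradius}, or from the generalized connectedness principle (Theorem \ref{result:connprinc}) applied to the highly connected inclusion $F\hookrightarrow M$, which would force $\pi_1(M)\cong\pi_1(F)=\mathbb Z^{\dim F}$, contradicting the finiteness of $\pi_1(M)$ guaranteed by Bonnet--Myers.
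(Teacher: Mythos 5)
Your strategy is genuinely different from the paper's (which runs an induction on the dimension $n$, splitting into the cases $\dim F \geq k+1$ versus $\dim F \leq k$ for a carefully chosen fixed-point component, and in the latter case moves up to a larger fixed-point component $F'$ to apply the inductive hypothesis). Much of your argument is sound: the reduction via \cite{localsymrank}, the cohomogeneity-one base case via Pak--Parker, the codimension sandwich $2(r-k)\le c\le 2(n-r)$ forcing $\dim F\in\{k-1,k,k+1\}$, the disposal of $\dim F=k+1$ by the extremal case, and the identification of $F$ as a flat totally geodesic torus in the equality case are all correct.

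The genuine gap is the final step, ruling out the flat totally geodesic torus $F\cong T^{k-1}$ or $T^{k}$. Neither fallback you propose closes it. The connectedness principle (Theorem~\ref{result:connprinc}) applied to $F\hookrightarrow M$ with $G=\widetilde K$ gives connectivity $n-2c+2-k+\delta(\widetilde K)$; with $c\approx n-k$ and $\delta(\widetilde K)\le c/2$ this is roughly $\tfrac{1}{2}(k-n)+2$, which is $\le 1$ once $n\ge k+2$, so you cannot conclude $\pi_1(M)\cong\pi_1(F)$. (The issue is that $\codim F \approx n-k$ is large precisely when $n$ is large relative to $k$, which is the range you need.) The second-variation route establishes at most that a suitable iterate of a closed geodesic in $F$ has strictly positive index; but a closed geodesic of positive index, or even a large family of them with large null spaces, is not itself a contradiction in a simply connected manifold with $\Ric_k>0$—all of your candidate geodesics are contractible since $\pi_1(M)$ is finite, and contractible geodesics of positive index exist on essentially every closed simply connected manifold. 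Without a concrete mechanism turning ``positive-index closed geodesic inside a flat totally geodesic $T^{k}$'' into a topological or minimizing contradiction, the argument does not terminate, and I do not see from your sketch that such a mechanism exists; in particular, the simple obstruction from $\Ric_k>0$ only forbids flat totally geodesic submanifolds of dimension $\geq k+1$, so dimension $k$ itself is not a priori excluded.
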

	
	It follows from Theorem \ref{result:main} and Proposition \ref{result:symrankbound} that if $k\leq 3$, or if $k=4$ and $n$ is odd, then the symmetry rank of $M^n$ is at most $\left\lfloor\frac{n+1}{2}\right\rfloor$, the same bound as for manifolds with positive sectional curvature.
	In particular, the symmetry rank bounds are optimal in these cases; see Examples \ref{ex:GS}, \ref{ex:prod}, and \ref{ex:S3xS3symrank} for more information.
	It also follows from these examples that the symmetry rank bounds are optimal in dimension $n=6$ for all values of $k$.
	
%	For examples of Riemannian manifolds that achieve the symmetry rank upper bounds in Proposition \ref{result:symrankbound}, see Examples \ref{ex:GS}, \ref{ex:prod}, and \ref{ex:S3xS3symrank} below.
%	From these examples, it follows that the symmetry rank bounds in Theorem \ref{result:main} and Proposition \ref{result:symrankbound} are optimal in the following cases:
%	\begin{enumerate}[\hspace{1em}(a)]
%		\item $\Ric_k>0$ for $k\leq 3$,
%		\item odd dimension $n$ and $\Ric_4>0$,
%		\item dimension $n=6$.
%	\end{enumerate}
	
	Recall that for an $n$-dimensional manifold, $\Ric_{n-1}>0$ is equivalent to positive Ricci curvature.
	It follows from work of Pak \cite{pak} and Parker \cite{parker} that closed manifolds of dimension $n\geq 4$ cannot have positive Ricci curvature while having symmetry rank $\geq n-1$; see Remark \ref{rmk:PakParker} below.
	Thus, the symmetry rank for closed manifolds of dimension $n\geq 4$ with positive Ricci curvature is bounded above $n-2$.
	This fact is reflected in Proposition \ref{result:symrankbound} in the case $k=n-1$.
	In dimensions $n\leq 6$, spheres or their Riemannian products provide examples of Ricci positive manifolds with symmetry rank $n-2$.
	Corro and Galaz-Garc\'ia show in \cite{riccisymrank} that for dimensions $n\geq 7$, there exist closed, simply connected $n$-dimensional manifolds that admit metrics of positive Ricci curvature with symmetry rank $n-4$. 
	It is still unknown whether it is possible to find $n$-dimensional manifolds that admit metrics of positive Ricci curvature with symmetry rank $n-2$ for $n\geq 7$.
	
	Our last tool generalizes Wilking's Connectedness Principle from \cite{connprinc} for fixed point sets of isometric group actions on closed manifolds with $\Ric_k>0$: 
	
	\begin{result}\label{result:connprinc}
		Let $M^n$ be a compact, $n$-dimensional manifold with $\Ric_k>0$ for some $k\in\{2,\dots,n-1\}$. 
		Suppose $N^{n-d}$ is a compact embedded submanifold of codimension $d$ in $M^n$. If there is a Lie group $G$ that acts by isometries on $M^n$ and fixes $N^{n-d}$ point-wise, then the inclusion 
		\[
			N^{n-d}\hookrightarrow M^n\text{ is }(n-2d+2-k+\delta(G))\text{-connected},
		\] 
		where $\delta(G)$ is the dimension of the principal $G$-orbits in $M^n$.
	\end{result}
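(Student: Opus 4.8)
The plan is to generalize Wilking's proof of the connectedness principle from \cite{connprinc}, isolating the single place where positive sectional curvature is used — an index estimate for a space of parallel Jacobi fields along a geodesic — and replacing ``$\sec>0$'' there by ``$\Ric_k>0$'', which weakens that estimate by exactly $k-1$. First I would record the one consequence of the hypotheses that is needed: since $N^{n-d}$ is fixed point-wise by the isometric $G$-action, it is a totally geodesic submanifold of $M^n$ (the fixed-point set of a group of isometries is totally geodesic). Next I would reduce to a path-space Morse theory problem. Fixing $p\in N$, the space $\Omega(M;p,N)=\{c\colon[0,1]\to M \mid c(0)=p,\ c(1)\in N\}$ is the fiber of the evaluation map $\Omega(M;N,N)\to N$, $c\mapsto c(0)$, a fibration split by the inclusion of $N$ as the constant paths; moreover the path--loop fibration over $M$ gives $\pi_i(M,N)\cong\pi_{i-1}(\Omega(M;p,N))$. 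Hence it suffices to show that the pair $(\Omega(M;N,N),N)$ — with $N\subset\Omega(M;N,N)$ the constant paths — is $(n-2d+1-k+\delta(G))$-connected.

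I would then study the energy functional $E$ on $\Omega(M;N,N)$ via its broken-geodesic approximations. Its critical set consists of $N$ itself (the absolute minimum, a nondegenerate critical submanifold of index $0$) together with the nonconstant geodesics $\gamma\colon[0,\ell]\to M$ meeting $N$ orthogonally at both endpoints. The crux is to bound the index of such a $\gamma$ from below using $\Ric_k>0$. Because $N$ is totally geodesic the second-fundamental-form boundary terms in the index form $I$ vanish, and one may restrict to variation fields perpendicular to $\gamma'$. The space of parallel fields $E$ along $\gamma$ with $E\perp\gamma'$ has dimension $n-1$; requiring $E(0)\in T_{\gamma(0)}N$ imposes $d-1$ conditions (as $\gamma'(0)\perp N$) and requiring $E(\ell)\in T_{\gamma(\ell)}N$ imposes at most $d-1$ more, so the space $W$ of parallel, $\gamma'$-orthogonal fields that are admissible at both endpoints satisfies $\dim W\ge n-2d+1$. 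For parallel $E\in W$ one has $I(E,E)=-\int_0^\ell\langle R(E,\gamma')\gamma',E\rangle\,dt$, so for any orthonormal fields $E_1,\dots,E_k$ in $W$,
\[
\sum_{i=1}^{k} I(E_i,E_i)=-\int_0^\ell|\gamma'|^2\sum_{i=1}^{k}\sec\!\left(\tfrac{\gamma'}{|\gamma'|},E_i\right)dt<0,
\]
since $\{\gamma'/|\gamma'|,E_1,\dots,E_k\}$ is orthonormal and $\Ric_k>0$. Thus the symmetric form $I|_W$ has the property that the sum of its $k$ largest eigenvalues is negative, so it has at most $k-1$ non-negative eigenvalues; hence $I$ is negative definite on a subspace of $W$ of dimension $\ge\dim W-(k-1)\ge n-2d+2-k$, giving $\index\gamma\ge n-2d+2-k$.

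With the index estimate in hand, standard Morse--Bott theory of $E$ shows that $\Omega(M;N,N)$ has, up to homotopy, $N$ as a subcomplex with all remaining cells of dimension $\ge n-2d+2-k$; hence $(\Omega(M;N,N),N)$ is $(n-2d+1-k)$-connected, and unwinding the split fibration and then $\pi_i(M,N)\cong\pi_{i-1}(\Omega(M;p,N))$ yields $\pi_i(M,N)=0$ for $i\le n-2d+2-k$. This is Theorem \ref{result:connprinc} when $\delta(G)=0$ (and reduces to Wilking's bound when $k=1$). To recover the general $\delta(G)$, I would run the same argument with $\Omega(M;N,N)$ replaced by the space of paths from a principal orbit $G\!\cdot q$ (for generic $q$) to $N$, following \cite{connprinc}: the minimal geodesics from $G\!\cdot q$ to $N$ form a critical submanifold diffeomorphic to $G\!\cdot q$, of dimension $\delta(G)$, on which evaluation at $t=0$ restricts to a diffeomorphism, so the evaluation fibration is split by this minimal set; the index estimate above, applied to the non-minimal critical geodesics, then feeds into the same Morse-theoretic bookkeeping and produces the $\delta(G)$-shift.

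I expect the main obstacle to be the equivariant refinement — the passage that introduces $\delta(G)$. The index estimate itself is robust: $\Ric_k>0$ controls precisely the sum $\sum_{i=1}^k I(E_i,E_i)$ over a $k$-frame of parallel fields, and the eigenvalue inequality then costs only $k-1$ dimensions relative to the $\sec>0$ case, so the genuinely new input is just this linear-algebra observation. What needs care is checking that Wilking's mechanism for the $\delta(G)$-improvement survives the weakening of the curvature hypothesis — concretely, that the minimal-geodesic critical submanifold over a principal orbit still behaves as in \cite{connprinc} and that the Killing-field Jacobi fields along the relevant geodesics still lie in the nullspace of the index form (which they should, since this depends on the group action and on $N$ being totally geodesic rather than on the full curvature tensor), so that the Morse-theoretic count reproduces the stated connectivity $n-2d+2-k+\delta(G)$. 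A smaller technical point is to confirm that in possibly degenerate configurations of $\gamma$ relative to $N$ the space $W$ can still be taken $\gamma'$-orthogonal with $\dim W\ge n-2d+1$.
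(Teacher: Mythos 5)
Your index estimate via parallel fields is correct as far as it goes, but it reproduces exactly the bound $n-2d+2-k$ that Wilking already records in \cite[Remark 2.4]{connprinc} (and that the paper explicitly cites as known): for any orthonormal $k$-frame $E_1,\dots,E_k$ of parallel $N$-admissible fields, $\sum_i I(E_i,E_i)<0$ forces $I$ to be negative definite on a subspace of codimension $\le k-1$ in $W$. That part is fine. The genuine content of Theorem~\ref{result:connprinc} is the $\delta(G)$-shift, and there your proposal breaks down.

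First, the plan to ``follow \cite{connprinc}'' for the $\delta(G)$-improvement does not exist as a route here: as the paper notes in its introduction, Wilking's mechanism for the equivariant gain is a Cheeger deformation argument, which depends on $\sec>0$ and does not carry over to $\Ric_k>0$; it is not a paths-from-an-orbit argument. Your alternative --- replacing $\Omega(M;N,N)$ by paths from a principal orbit $G\cdot q$ to $N$ --- is not set up in a way that could deliver the stated conclusion: the minimal set is $G$-invariant but not in general a copy of $G\cdot q$ (multiple minimal geodesics may emanate from a single point), and, more fundamentally, you never explain how connectivity of that pair translates into connectivity of $(M,N)$, nor how changing the endpoint would add $\delta(G)$ to a bound whose critical-point indices you have only estimated from below by $n-2d+2-k$. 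Second, the remark that the Killing-field Jacobi fields ``lie in the nullspace of the index form'' identifies the right objects but misdiagnoses their role. A Jacobi field generated by the action and vanishing at both endpoints contributes to the \emph{nullity} of the free index form, not to its index, and in the Hingston--Kalish formula its $+1$ to the focal count at $t=b$ is exactly offset by the $-\dim(\mathcal{K}_b(b)\cap TN^\perp)$ correction; nullity alone buys nothing. What actually has to happen --- and what the paper's Lemma~\ref{lem:indexofgeodesic} does --- is to pass to the fixed-point component $F$ of the common isotropy $H=\bigcap_t G_{\gamma(t)}$, use the space $\Delta$ of action fields of $N(H)$ on $F$ to enlarge $U_0=\mathcal{K}_+(0)^\perp\cap T_{\gamma(0)}N$ by $\dim\Delta(0)=\delta(N(H))$ (via $\Delta(0)\subseteq\mathcal{K}_+(0)\cap T_{\gamma(0)}N^\perp$), and then run Guijarro--Wilhelm's transverse Jacobi/Riccati comparison (Lemma~\ref{lem:comparison}) and the trace-to-eigenvalue lemma (Lemma~\ref{lem:negdef}) to make the index form $A$ negative definite on a large subspace of $U_b=\mathcal{V}(b)^\perp\cap T_{\gamma(b)}N$; the final count is delivered by the Hingston--Kalish Morse index theorem for two endmanifolds, together with the inequality $\delta(N(H))+l\ge\delta(G)$, and there is a case split on $\dim\mathcal{K}_+$ that your proposal does not anticipate. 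None of this is visible from the parallel-field argument, which cannot see the orbit directions at all (the action fields that carry $\delta(G)$ vanish at both endpoints and are never parallel). So the proposal, while correctly identifying where the difficulty lies, leaves precisely the new part of the theorem unproved.
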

	
	Recall that a map $f:N\to M$ is \textit{$j$-connected} if the induced map $\pi_i(f):\pi_i(N)\to\pi_i(M)$ is an isomorphism for $i<j$ and an epimorphism for $i=j$. 
	The proof of Theorem \ref{result:connprinc} relies on a Morse-theoretic argument on the space of curves in $M$ that start and end in $N$.
	Wilking's proof of the positive sectional curvature ($k=1$) case does not extend to the setting of $\Ric_k>0$ for $k\geq 2$.
	This is because he uses a Cheeger deformation argument that relies on the assumption of positive sectional curvature.
	However, Wilking does observe in \cite[Remark 2.4]{connprinc} that for a totally geodesic submanifold $N^{n-d}$ of a closed manifold $M^n$ with $\Ric_k(M^n)>0$, the inclusion $N^{n-d}\hookrightarrow M^n$ is $(n-2d+2-k)$-connected, without assuming existence of an isometric group action. 
	For a generalization of Wilking's result to submanifolds that are not necessarily totally geodesic in manifolds with $\Ric_k>0$, see the work of Fang, Mendon\c{c}a, and Rong in \cite{FangMendoncaRong}.
	Guijarro and Wilhelm established a quantitative version of Wilking's Connectedness Principle for manifolds with positive intermediate Ricci curvature in \cite{softconnprinc} using a Jacobi field comparison result.
	Again, they do not assume existence of an isometric group action.
	Our argument for Theorem \ref{result:connprinc} is adapted from Guijarro and Wilhlelm's approach.
	
	\subsection{Organization}
		
		In Section \ref{sec:examples}, we present examples of closed manifolds with $\Ric_k>0$ and discuss their symmetry rank.
		In Section \ref{sec:fixedpointsets}, we study zero sets of commuting Killing fields on closed manifolds with $\Ric_k>0$, and we prove Proposition \ref{result:torusfixedpt}. 
		In Section \ref{sec:symrankbound}, we use Proposition \ref{result:torusfixedpt} to prove our symmetry rank bounds in Theorem \ref{result:main} and Proposition \ref{result:symrankbound}.
		In Section \ref{sec:connprinc}, we establish our connectedness principle, Theorem \ref{result:connprinc}.
		Finally, in Sections \ref{sec:odd} and \ref{sec:even}, we study closed manifolds with $\Ric_2>0$ and large symmetry rank, focusing on odd dimensions in Section \ref{sec:odd} and even dimensions in Section \ref{sec:even}.
		We prove the following results in the corresponding subsections: 
		the odd dimensional case of Theorem \ref{result:main} in Section \ref{sec:odddiffeo}, Theorem \ref{result:odd} in Section \ref{sec:oddhomeo}, Theorem \ref{result:evenEuler} in Section \ref{sec:posEuler}, the even dimensional case of Theorem \ref{result:main} in Section \ref{sec:evenmaxsymrank}, and Theorem \ref{result:even} in Section \ref{sec:even3/4}.
		
	\subsection{Acknowledgements} 
		
		Some of the results in this article are part of my doctoral thesis at the University of California, Riverside.
		I thank my advisor, Fred Wilhelm, for his valuable guidance and feedback.
		Other parts of this article were completed while I was employed as a part-time postdoctoral researcher by the Research Foundation of the City University of New York.
		I thank Christina Sormani for that opportunity.
		I acknowledge Luis Guijarro and Fred Wilhelm for their consultation on the proof of Theorem \ref{result:connprinc}.
		I also thank Manuel Amann for suggestions on improving Theorem \ref{result:even}.
		I thank Fernando Galaz-Garc\'ia for several helpful suggestions and for bringing the article \cite{MontgomeryYang} to my attention.
		I also thank the referee for many helpful suggestions which helped me to improve the exposition of this article.
		Finally, I thank Lee Kennard for many stimulating discussions concerning this project and for his guidance in the preparation of this article.
		I thank Syracuse University for hosting my visit to collaborate with Lee.
	
	\section{Examples}\label{sec:examples}

In this section, we present a few examples of closed manifolds with $\Ric_k>0$ and large torus actions.
First, we recall maximal torus actions for manifolds with positive sectional curvature:

\begin{example}\label{ex:grovesearle}
	Grove and Searle proved in \cite{maxsymrank} that any closed, connected, $n$-dimensional Riemannian manifold $(M,g)$ with positive sectional curvature must have $\symrank(M,g)\leq\lfloor\frac{n+1}{2}\rfloor$, and in the case of equality, the manifold must be diffeomorphic to a sphere, complex or real projective space, or a lens space. 
	Here, $\lfloor x\rfloor$ denotes the largest integer less than or equal to the quantity $x$.
	A maximal torus action $T^m\times S^{2m-1}\to S^{2m-1}$ on the odd-dimensional unit sphere $S^{2m-1}\subset\mathbb{C}^m$ is given in complex coordinates by 
	\[
		(e^{i\theta_1},\dots,e^{i\theta_m}) \cdot (z_1,\dots,z_m) \defeq (e^{i\theta_1}z_1,\dots,e^{i\theta_m}z_m).
	\]
	This action induces an effective $T^m$-action on all lens spaces $S^{2m-1}/\mathbb{Z}_q$.
	Furthermore, letting $\Delta S^1$ denote the diagonal embedding of the circle in $T^m$, this $T^m$-action on $S^{2m-1}$ descends to an effective action by $T^{m-1}\cong T^m/\Delta S^1$ on $\mathbb{C}\mathrm{P}^{m-1} = S^{2m-1}/\Delta S^1$.
	Finally, a maximal $T^m$-action on the even-dimensional sphere $S^{2m}\subset\mathbb{C}^m\oplus\mathbb{R}$ is given by suspending the action above, i.e. with the action on the $\mathbb{R}$-factor being trivial.
	This action on $S^{2m}$ induces an effective $T^m$-action on $\mathbb{R}\mathrm{P}^{2m}$.
\end{example}

Next, we discuss the elementary class of examples given by Riemannian products:

\begin{example}\label{ex:product}
	Consider a collection of Riemannian manifolds $\{(M_i,g_i)\}_{i=1}^N$, each of dimension at least $2$ and with $\Ric_{k_i}(M_i,g_i)>0$ for some $k_i\in\{1,\dots,\dim M_i-1\}$.
	It is a straightforward exercise to check that, with respect to the product metric, 
	\begin{equation}\label{eq:product}
		\Ric_k(M_1\times \dots\times M_N)>0\;\text{ only for }\;k\geq\max_{i=1,\dots,N}\left\{k_i+{\textstyle\sum}_{j\neq i}\dim M_j\right\}.
	\end{equation}
	For example, consider spheres $S^n$ and $S^m$ of dimension at least $2$ with the standard round metrics. 
	Then with respect to the product metric, $S^n\times S^m$ has $\Ric_k>0$ only for $k\geq\max\{n+1,m+1\}$ and has symmetry rank equal to $\lfloor\frac{n+1}{2}\rfloor+\lfloor\frac{m+1}{2}\rfloor$.
	
\end{example}

Notice it follows from \eqref{eq:product} that no Riemannian product will have $\Ric_2>0$.
Finally, we present a metric on $S^3\times S^3$ that has $\Ric_2>0$ and maximal symmetry rank:

\begin{example}\label{ex:S3xS3}
	Consider the sphere $S^3\subset \mathbb{H}$ as the Lie group of unit quaternions.
	Let $\Delta S^3$ denote the diagonal embedding of $S^3$ as a subgroup of the product $S^3\times S^3\times S^3$.
	Suppose $S^3\times S^3\times S^3$ is equipped with the standard Riemannian product metric, $g_\mathrm{prod}$, where each factor of $S^3$ has the standard (round) biinvariant metric.
	Then the $\Delta S^3$-action on $S^3\times S^3\times S^3$ by right multiplication is free and by isometries, and hence the quotient $(S^3\times S^3\times S^3)/\Delta S^3$ inherits a Riemannian metric $g_\mathrm{quot}$ such that the quotient map $(S^3\times S^3\times S^3,g_\mathrm{prod}) \to ((S^3\times S^3\times S^3)/\Delta S^3,g_\mathrm{quot})$ is a Riemannian submersion.
	Now, the quotient is diffeomorphic to $S^3\times S^3$ via the map $(S^3\times S^3\times S^3)/\Delta S^3\to S^3\times S^3$ given by $(a,b,c)\Delta S^3 \mapsto (ac^{-1},bc^{-1})$.
	Let $g_*$ denote the pushforward of the metric $g_\mathrm{quot}$ through this map.
	Then the metric $g_*$ on $S^3\times S^3$ is left-invariant, it is invariant under the diagonal action of $S^3$ by right multiplication, and $\Ric_2(S^3\times S^3,g_*)>0$.
	For more information about this construction, including a generalization to products of compact semisimple Lie groups, see Theorem E in \cite{DavidLawrenceMiguel}.
	It follows that $\symrank(S^3\times S^3,g_*) = 3$, which is maximal for closed $6$-dimensional manifolds with $\Ric_2>0$ by Theorem \ref{result:main}.
	In particular, a maximal isometric torus action $T^3\times (S^3\times S^3)\to S^3\times S^3$ with respect to $g_*$ is given in quaternionic coordinates by 
	\[
		(p,q,r)\cdot(a,b) \defeq (par^{-1},qbr^{-1}).
	\]
	By taking quotients of free isometric circle actions, it also follows that $S^3\times S^2$ inherits a metric with $\Ric_2>0$ and symmetry rank $2$, and $S^2\times S^2$ inherits a metric with $\Ric_2>0$ and symmetry rank $1$.

\end{example}

Example \ref{ex:S3xS3} shows that the classification result by Grove and Searle in \cite{maxsymrank} fails to generalize to $\Ric_2>0$ in dimension $6$.
Also, the induced metric on $S^3\times S^2$ shows that the main result by Rong in \cite{Rong_2002} fails to generalize to $\Ric_2>0$.
Furthermore, the induced metric on $S^2\times S^2$ shows that the Hsiang-Kleiner theorem from \cite{HsiangKleiner} fails to generalize to $\Ric_2>0$.

%Recall Theorem \ref{result:evenhalfmaxsymrank} asserts if $M$ is closed, is of even dimension at least $8$, and admits a metric with $\Ric_2>0$ and symmetry rank greater than $\frac{1}{4}(\dim M + 6)$, then $M$ must have positive Euler characteristic.
%Example \ref{ex:productoddspheres} shows that Theorem \ref{result:evenhalfmaxsymrank} is optimal. 
%Namely, the products $S^{2m-1}\times S^{2m-1}$ admit metrics with $\Ric_2>0$ and symmetry rank at least $m+1=\frac{1}{4}(\dim(S^{2m-1}\times S^{2m-1})+6)$, and yet they have Euler characteristic zero. 
%This shows that the symmetry rank of these metrics is in fact equal to $m+1$, and the assumption that symmetry rank be greater than $\frac{1}{4}(\dim M + 6)$ in Theorem \ref{result:evenhalfmaxsymrank} cannot be weakened in dimensions congruent to $2\mod 4$.

	\section{Fixed point sets of torus actions}\label{sec:fixedpointsets}
	
	In this section, we establish Proposition \ref{result:torusfixedpt}, which asserts that a closed manifold with $\Ric_k>0$ and a large isometric torus action must have points with non-trivial isotropy. Because the action fields induced by isometric torus actions are commuting Killing fields, we will prove Proposition \ref{result:torusfixedpt} by first proving the following:
	
	\begin{proposition}\label{prop:lindep}
		Suppose $M^n$ is a closed $n$-manifold and $\Ric_k(M^n,g)>0$ for some $k\in\{1,\dots,n-1\}$. If there are $k+1$ commuting Killing fields on $M^n$, then they must be linearly dependent at some point in $M^n$.
	\end{proposition}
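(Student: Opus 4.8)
The plan is to argue by contradiction, adapting the Berger--Sugahara technique. Write the commuting Killing fields as $X_1,\dots,X_{k+1}$ and suppose they are linearly independent at every point of $M$. Then they span a $(k+1)$-dimensional abelian subalgebra $\mathfrak a\subseteq\mathfrak{iso}(M)$, and no nonzero $Z\in\mathfrak a$ can have a zero, since a point where $Z=\sum a_iX_i$ vanishes would be a point of linear dependence of the $X_i$. I will derive a contradiction from this, together with $\Ric_k>0$ and compactness of $M$.

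First I would pick out the right function to study. Fix an inner product on $\mathfrak a$ with unit sphere $S(\mathfrak a)$ and consider $\Phi(Z,p)=|Z(p)|^2$ on the compact space $S(\mathfrak a)\times M$; by the preceding observation its minimum $\mu$ is positive, attained at some $(Z_0,p_0)$. Minimality in $p$ (with $Z_0$ fixed) forces $p_0$ to minimize $|Z_0|^2$, so $\grad\tfrac12|Z_0|^2=-\nabla_{Z_0}Z_0$ vanishes at $p_0$ --- i.e.\ the $Z_0$-flow line $c(t)=\phi^{Z_0}_t(p_0)$ is a geodesic --- and the standard second-order Killing-field identity gives $\Hess(\tfrac12|Z_0|^2)(v,v)=|\nabla_vZ_0|^2-\langle R(Z_0,v)v,Z_0\rangle\ge0$ for all $v\in T_{p_0}M$. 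Minimality in $Z$ (with $p_0$ fixed) is a constrained extremum on $S(\mathfrak a)$, which yields $\langle W(p_0),Z_0(p_0)\rangle=0$ for every $W\in\mathfrak a$ orthogonal to $Z_0$.

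The next input is an elementary identity: for commuting Killing fields the trilinear form $(A,B,C)\mapsto\langle\nabla_AB,C\rangle$ on $\mathfrak a$ is symmetric in $(A,B)$ (since $\nabla_AB=\nabla_BA$) and antisymmetric in $(A,C)$ (skew-symmetry of the operator $\nabla B$), hence vanishes identically; likewise $\langle X_i,X_j\rangle$ is constant along $\mathfrak a$-orbits. Combining the Hessian inequality with $\Ric_k>0$ now bounds the kernel of $\nabla Z_0|_{p_0}$: on $\ker(\nabla Z_0|_{p_0})\cap Z_0(p_0)^{\perp}$ the inequality forces $\sec(Z_0(p_0),v)\le0$, so if that space were $k$-dimensional it would be a $k$-plane perpendicular to $Z_0(p_0)$ with nonpositive sectional-curvature sum, contradicting $\Ric_k>0$; hence $\dim\ker(\nabla Z_0|_{p_0})\le k$. (For $n$ even and $k=1$ this already contradicts skew-symmetry of $\nabla Z_0|_{p_0}$, recovering Berger's theorem; when $n$ is odd one needs the extra commuting fields, exactly as in Sugahara's argument.)

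Finally I would carry the analysis along the geodesic $c$. For $W\in\mathfrak a$ orthogonal to $Z_0$, the restriction $J_W=W\circ c$ is a Jacobi field, and since $[Z_0,W]=0$ the field $W$ is $\phi^{Z_0}_t$-invariant, so $|J_W|$ is constant; because $\langle X_i,X_j\rangle$ is constant along $\mathfrak a$-orbits one may choose a basis $W_1,\dots,W_k$ of $Z_0^{\perp}\cap\mathfrak a$ making $J_{W_1}(t),\dots,J_{W_k}(t)$ orthonormal for all $t$ and perpendicular to $c'$. Their pairwise Wronskians $\langle J_{W_i}',J_{W_j}\rangle$ are constant in $t$ and, by the identity above, vanish at $t=0$, so the family is isotropic; one then gets $\langle R(J_{W_i},c')c',J_{W_i}\rangle=|J_{W_i}'|^2$ along all of $c$, with $\sum_i\langle R(J_{W_i},c')c',J_{W_i}\rangle>0$ by $\Ric_k>0$. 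The claim is that a $k$-dimensional isotropic family of everywhere-independent, constant-norm Jacobi fields perpendicular to $c'$ cannot live along a complete geodesic in a closed manifold with $\Ric_k>0$: such a manifold has a uniform positive lower Ricci bound, so Bonnet--Myers forces conjugate points along $c$, and an index-form comparison played against the $J_{W_i}$ (whose index forms over subintervals, cut off to vanish at the endpoints, are diagonal and positive definite) produces the contradiction; in the extreme case $k=n-1$ this is immediate, since there the $J_{W_i}$ would have to be parallel and hence make every $\sec(c',J_{W_i})$ vanish. I expect this last step --- converting the isotropic constant-norm Jacobi family into a genuine contradiction with $\Ric_k>0$ --- to be the main obstacle, with the quantitative Jacobi-field comparisons available for positive intermediate Ricci curvature as the natural tool.
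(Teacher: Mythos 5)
Your overall strategy is in the right spirit --- a Berger--Sugahara style minimization followed by a second-variation estimate at the critical point --- but you chose an auxiliary function that is too weak to close the argument, and you acknowledge this yourself in the last paragraph. Let me be concrete about where the gap is and how the paper's proof avoids it.

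You minimize $\Phi(Z,p)=|Z(p)|^2$ over $S(\mathfrak a)\times M$, and at the minimizer $(Z_0,p_0)$ the standard identity $\Hess\tfrac12|Z_0|^2(v,v)=|\nabla_v Z_0|^2-R(v,Z_0,Z_0,v)\ge 0$ gives $R(v,Z_0,Z_0,v)\le 0$ for $v\in\ker(\nabla Z_0|_{p_0})$, hence $\dim\ker(\nabla Z_0|_{p_0})\le k$. That is only an \emph{upper} bound; nothing forces that kernel to be larger than $1$, so for $k\ge 2$ there is no contradiction yet at $p_0$. The parity trick you invoke for $n$ even, $k=1$ is exactly the Berger case and does not extend. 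At this point you switch to a Jacobi-field argument along the geodesic orbit $c$ and, as you write, you "expect this last step to be the main obstacle" --- i.e., the proof stops here. The family $J_{W_1},\dots,J_{W_k}$ does have the nice properties you list (isotropic, constant norm, orthonormal, perpendicular to $c'$), but merely deducing $\sum_i\sec(c',J_{W_i})=\sum_i|J_{W_i}'|^2>0$ is not itself a contradiction, since it is consistent with there being a positive lower Ricci bound. You would need a genuinely new comparison estimate here, and you have not supplied one.

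The paper avoids any global argument by choosing a better function: $f^\perp=\tfrac12|X^\perp|^2$, where $X^\perp$ is the component of one Killing field $X$ orthogonal to the span $\mathcal Y$ of the other $k$. At a (positive) minimum of $f^\perp$, one can normalize so that $Y_1,\dots,Y_k$ are orthonormal at $p$ and $X|_p\perp\mathcal Y|_p$, and then one computes the extra term
\[
\Hess f^\perp(v,v)=|\nabla_v X|^2 - R(v,X,X,v) - 4\sum_{i=1}^k\langle\nabla_v X,Y_i\rangle^2,
\]
which, after decomposing $|\nabla_v X|^2$, yields $R(v,X,X,v)\le |P^\perp(\nabla_v X)|^2 - 3\sum_i\langle\nabla_v X,Y_i\rangle^2$. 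The point is that the relevant kernel is now $\ker(P^\perp\circ\nabla X)$, not $\ker\nabla X$. A rank--nullity count --- using $X\in\ker\nabla X$, skew-symmetry (so $\Image\nabla X\perp X$), and $X\perp\mathcal Y$ --- gives $\dim(\Image\nabla X+\mathcal Y)\le n-1$ and hence $\dim\ker(P^\perp\circ\nabla X)\ge k+1$. Picking $v_1,\dots,v_k$ orthonormal in this kernel and orthogonal to $X$ then produces $\sum_i R(v_i,X,X,v_i)\le 0$, a genuine violation of $\Ric_k>0$, purely at the point $p$. The extra $-4\sum$ term in the Hessian and the projection-enlarged kernel are precisely the ingredients your proposal is missing; without them, $\dim\ker\nabla Z_0\le k$ has nothing to collide with.
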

	
	As discussed in the introduction, Proposition \ref{prop:lindep} generalizes Sugahara's result from \cite{sugahara} stating that any two commuting Killing fields on a closed manifold with positive sectional curvature must be linearly dependent at some point; see also the proof of Theorem 8.3.5 in \cite{petersen}.

	We now setup notation for the proof of Proposition \ref{prop:lindep}. Given vector fields $Y_1,\dots,Y_k$ on $M$ that are linearly independent in each tangent space, define the distribution
	\[
		\mathcal{Y}\defeq\mathrm{span}\{Y_1,\dots,Y_k\}.
	\]
	Given a Riemannian metric on $M$ and a vector field $X$ on $M$, define the vector field $X^\perp$ at each point $p$ to be the projection of $X|_p$ onto the orthogonal complement of $\mathcal{Y}|_p$, and define the function $f^\perp:M\to \mathbb[0,\infty)$ by
	\[
		f^\perp \defeq \tfrac{1}{2} \left| X^\perp \right|^2. 
	\]
	We relate the Hessian of $f^\perp$ to the curvature tensor as follows:
	\begin{lemma}\label{lem:hess}
		Let $X,Y_1,\dots,Y_k$ be commuting Killing fields on $M$ that are linearly independent in each tangent space. Suppose there is a point $p\in M$ at which $Y_1|_p,\dots,Y_k|_p$ are orthonormal and $X|_p$ is orthogonal to the subspace $\mathcal{Y}|_p\subseteq T_pM$. Then for all $v\in T_pM$, the Hessian of the function $f^\perp \defeq \frac{1}{2} | X^\perp |^2$ at the point $p$ is given by
		\[
			\Hess f^\perp(v,v)=|\nabla_vX|^2-R(v,X,X,v)-4\sum_{i=1}^k\langle\nabla_vX,Y_i\rangle^2.
		\]
	\end{lemma}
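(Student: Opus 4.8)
The plan is to rewrite $f^\perp$ so that the orthogonal projection onto $\mathcal Y$ appears explicitly, and then differentiate twice along a geodesic through $p$. Set $g_{ij}\defeq\langle Y_i,Y_j\rangle$, let $(g^{ij})$ denote the (everywhere defined and smooth) inverse matrix, and put $\phi_i\defeq\langle X,Y_i\rangle$. Since the orthogonal projection of $X$ onto $\mathcal Y$ is $\sum_{i,j}g^{ij}\phi_j\,Y_i$, a short computation gives
\[
	f^\perp=\tfrac{1}{2}|X|^2-\tfrac{1}{2}\sum_{i,j}g^{ij}\phi_i\phi_j .
\]
Write $h\defeq\tfrac{1}{2}|X|^2$ and $\psi\defeq\tfrac{1}{2}\sum_{i,j}g^{ij}\phi_i\phi_j$, so $f^\perp=h-\psi$. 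Fix $v\in T_pM$, let $\gamma$ be the geodesic with $\gamma(0)=p$ and $\dot\gamma(0)=v$, and recall $\Hess F(v,v)=(F\circ\gamma)''(0)$ for any smooth $F$. Thus it suffices to compute $(h\circ\gamma)''(0)$ and $(\psi\circ\gamma)''(0)$ separately.

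For the $h$-term I would invoke the standard Bochner identity for a Killing field. Differentiating $(h\circ\gamma)'(t)=\langle\nabla_{\dot\gamma}X,X\rangle$ once more gives $(h\circ\gamma)''(t)=|\nabla_{\dot\gamma}X|^2+\langle\nabla_{\dot\gamma}\nabla_{\dot\gamma}X,X\rangle$ (using $\nabla_{\dot\gamma}\dot\gamma=0$); substituting the Jacobi equation $\nabla_{\dot\gamma}\nabla_{\dot\gamma}X+R(X,\dot\gamma)\dot\gamma=0$ satisfied by a Killing field along a geodesic, using the symmetries of $R$, and evaluating at $t=0$ yields $(h\circ\gamma)''(0)=|\nabla_vX|^2-R(v,X,X,v)$. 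This produces the first two terms of the formula and uses only that $X$ is Killing.

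The heart of the matter is the $\psi$-term, and this is where the hypotheses at $p$ and the commutativity of the Killing fields enter. Applying the product rule twice to $\psi\circ\gamma=\tfrac{1}{2}\sum g^{ij}\phi_i\phi_j$ produces many terms, but every term that still carries an undifferentiated factor $\phi_i$ vanishes at $t=0$, since $\phi_i(p)=\langle X|_p,Y_i|_p\rangle=0$ by the assumption $X|_p\perp\mathcal Y|_p$; using in addition $g^{ij}(p)=\delta^{ij}$ (orthonormality of the $Y_i$ at $p$), what remains is
\[
	(\psi\circ\gamma)''(0)=\sum_{i=1}^k\big((\phi_i\circ\gamma)'(0)\big)^2 .
\]
To evaluate $(\phi_i\circ\gamma)'(0)=d\langle X,Y_i\rangle(v)$, I would polarize the identity $\grad(\tfrac{1}{2}|Z|^2)=-\nabla_ZZ$, valid for any Killing field $Z$, by applying it to $X$, $Y_i$, and $X+Y_i$; this gives $\grad\langle X,Y_i\rangle=-(\nabla_XY_i+\nabla_{Y_i}X)$, which equals $-2\nabla_{Y_i}X$ because $[X,Y_i]=0$. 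Combining with the skew-symmetry of $w\mapsto\nabla_wX$ yields $d\langle X,Y_i\rangle(v)=-2\langle\nabla_{Y_i}X,v\rangle=2\langle\nabla_vX,Y_i\rangle$. Squaring and summing gives $(\psi\circ\gamma)''(0)=4\sum_{i=1}^k\langle\nabla_vX,Y_i\rangle^2$, and then $\Hess f^\perp(v,v)=(h\circ\gamma)''(0)-(\psi\circ\gamma)''(0)$ is precisely the claimed formula.

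The step I expect to be the main obstacle is the bookkeeping in the previous paragraph: organizing the second derivative of $\tfrac{1}{2}\sum g^{ij}\phi_i\phi_j$ so that it is transparent which terms vanish at $p$, and correctly tracking the factor $2$ — and hence the final coefficient $4$ — coming from the commuting-Killing-field identity $\grad\langle X,Y_i\rangle=-2\nabla_{Y_i}X$. By contrast, the $h$-term and the matching of curvature-sign conventions are routine.
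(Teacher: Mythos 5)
Your proof is correct and follows essentially the same route as the paper's: both split $f^\perp = \tfrac12|X|^2 - \tfrac12|X^\top|^2$, use the standard Killing-field Hessian identity for the first term, and for the second term exploit that $\langle X,Y_i\rangle$ vanishes at $p$ so that only the squared first derivatives $\bigl(D_v\langle X,Y_i\rangle\bigr)^2$ survive, together with the commuting-Killing-field identity $D_v\langle X,Y_i\rangle = 2\langle\nabla_v X,Y_i\rangle$. The only cosmetic difference is that you write the projection via the inverse Gram matrix $g^{ij}$ whereas the paper Gram--Schmidts to an orthonormal frame $\hat Y_i$; at $p$ these manipulations coincide term by term.
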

	
	\begin{proof}
		Let $X^\top$ denote the projection of $X$ onto $\mathcal{Y}$, and define the functions $f:M\to \mathbb[0,\infty)$ and $f^\top:M\to \mathbb[0,\infty)$ by
		\[
			f \defeq \tfrac{1}{2} \left| X \right|^2, \quad \text{and} \quad f^\top \defeq \tfrac{1}{2} \left| X^\top \right|^2.
		\]
		Then $f^\perp = f - f^\top$, and because $X$ is a Killing field, 
		\[
			\Hess f (v,v) = |\nabla_v X|^2 - R(v,X,X,v).
		\]
		Thus, it suffices to show that at the point $p$, we have $\Hess f^\top(v,v) = 4 \sum_{i=1}^k \langle \nabla_v X , Y_i \rangle^2$.
		Because $Y_1,\dots,Y_k$ are linearly independent in each tangent space, we may perform a Gram-Schmidt process to produce the following (not necessarily Killing) orthonormal vector fields on $M$:
		\[
			\hat{Y}_i \defeq \bar{Y}_i / |\bar{Y}_i|, \quad \text{where} \quad \bar{Y}_i \defeq Y_i - \sum_{j<i} \langle Y_i, \hat{Y}_j \rangle \hat{Y}_j.
		\]
		Then it follows that 
		\[
			f^\top = \tfrac{1}{2} \sum_{i=1}^k \langle X, \hat{Y}_i \rangle^2, \quad \text{and} \quad \grad f^\top = \sum_{i=1}^k \langle X , \hat{Y}_i \rangle \grad \langle X , \hat{Y}_i \rangle.
		\]
		Thus, the Hessian of $f^\top$ is given by
		\[
			\Hess f^\top (v,v) = \sum_{i=1}^k \left[ (D_v \langle X , \hat{Y}_i \rangle)^2 + \langle X , \hat{Y}_i \rangle \Hess \left( \langle X , \hat{Y}_i \rangle \right) (v,v) \right].
		\]
		Now at the point $p$, because $Y_1|_p,\dots,Y_k|_p$ are orthonormal and $X|_p$ is orthogonal to $\mathcal{Y}|_p$, it follows that $\hat{Y}_i|_p = Y_i|_p$, $\langle X, \hat{Y}_i \rangle |_p = 0$, and $D_v \langle X , \hat{Y}_i \rangle|_p = D_v \langle X , Y_i \rangle|_p$ for all $i$.
%		Furthermore, because $X|_p$ is orthogonal to $\mathcal{Y}|_p$, we have that $\langle X, \hat{Y}_i \rangle |_p = 0$, and $D_v \langle X , \hat{Y}_i \rangle|_p = D_v \langle X , Y_i \rangle|_p$.
		Furthermore, because $X,Y_1,\dots,Y_k$ are commuting Killing fields, $D_v \langle X , Y_i \rangle = 2 \langle \nabla_v X , Y_i \rangle$ for all $i$.
		Therefore, the result follows.
	\end{proof}
	
	We can now use Lemma \ref{lem:hess} to prove Proposition \ref{prop:lindep}:
	
	\begin{proof}[Proof of Proposition \ref{prop:lindep}]
		We will prove the contrapositive of Proposition \ref{prop:lindep}.
		Suppose $X,Y_1,\dots,Y_k$ are $k+1$ commuting Killing fields on $M$ that are linearly independent in each tangent space. 
		We will show there exists a point $p\in M$ and orthonormal vectors $u,e_1,\dots,e_k\in T_pM$ such that $\sum_{i=1}^k \sec(u,e_i)\leq 0$. 
		
		Because $X,Y_1,\dots,Y_k$ are linearly independent, $f^\perp \defeq \frac{1}{2}|X^\perp|^2$ must attain a positive minimum at some point $p$. By performing a Gram-Schmidt process at $p$, we can find commuting Killing fields $Y_1,\dots,Y_k$ that span the same distribution $\mathcal{Y}$ and are orthonormal at $p$, and doing so does not change the values of $f^\perp $. Similarly, we can replace $X$ with the Killing field that commutes with $Y_1,\dots,Y_k$ such that $X|_p$ is orthogonal to $\mathcal{Y}|_p$, and this too will not change the values of $f^\perp $. 
		
		Now, with these new choices of $X,Y_1,\dots,Y_k$, by Lemma \ref{lem:hess}, we know for $v\in T_pM$,
		\begin{equation}\label{eq:Hess}
			\Hess f^\perp (v,v)=|\nabla_vX|^2-R(v,X,X,v)-4\sum_{i=1}^k\langle\nabla_vX,Y_i\rangle^2.
		\end{equation}
		Let $P^\perp$ denote the projection onto the orthogonal complement of $\mathcal{Y}|_p\subseteq T_pM$.
		Then at $p$, we have
		\begin{align*}
			| \nabla_v X |^2 &= \langle \nabla_v X , P^\perp ( \nabla_v X ) \rangle + \sum_{i=1}^k \big\langle \nabla_v X , \langle \nabla_v X , Y_i \rangle Y_i \big\rangle\\
			&= | P^\perp ( \nabla_v X ) |^2 + \sum_{i=1}^k \langle \nabla_v X , Y_i \rangle^2.
		\end{align*}
		Also, because $f^\perp $ attains a minimum at $p$, we have $\Hess f^\perp (v,v)\geq 0$.
		Thus by Equation \eqref{eq:Hess}, for all $v\in T_pM$,
		\begin{equation}\label{eq:curv}
			R(v,X,X,v)\leq | P^\perp ( \nabla_v X ) |^2 - 3 \sum_{i=1}^k\langle\nabla_vX,Y_i\rangle^2.
		\end{equation}
		Hence, it suffices to show that $\ker ( P^\perp \circ \nabla X)$ is at least $k+1$ dimensional, where $\nabla X : T_p M \to T_p M$ denotes the linear map given by $v \mapsto \nabla_v X$.
		First, note that
		\begin{align}\label{eq:kernel}
			\dim ( \ker ( P^\perp \circ \nabla X) ) &= \dim ( \ker \nabla X ) + \dim ( \Image \nabla X \cap \ker P^\perp ) \nonumber\\
			&= \dim ( \ker \nabla X ) + \dim ( \Image \nabla X \cap \mathcal{Y} )\nonumber\\
			&= \dim ( \ker \nabla X ) + \dim ( \Image \nabla X ) + \dim ( \mathcal{Y} ) - \dim ( \Image \nabla X + \mathcal{Y} )\nonumber\\
			&= n + k - \dim ( \Image \nabla X + \mathcal{Y} ).
		\end{align}
		Now recall that, because $X$ is a Killing field, $\nabla X$ is a skew-symmetric linear map. 
		In particular, given $u\in \ker \nabla X$ and $v\in T_pM$, 
		\[
			\langle \nabla_v X , u \rangle = - \langle \nabla_u X , v \rangle = 0.
		\]
		Thus $\Image \nabla X \subseteq ( \ker \nabla X )^\perp$.
		Finally, we show that $X\in\ker\nabla X$. 
		With our choices of $X,Y_1,\dots,Y_k$, we have 
		\[
			f^\perp = \tfrac{1}{2}|X^\perp|^2\leq\tfrac{1}{2}|X|^2, 
		\]
		with equality at $p$. 
		So defining $f \defeq\frac{1}{2}|X|^2$, we know $f $ also attains a minimum at $p$. 
		Hence $\grad f =-\nabla_XX=0$ at $p$, and thus $X\in \ker\nabla X$. 
%		Also notice, given $u\in \ker \nabla X$ and $v\in T_pM$, because $X$ is a Killing field,
%		\[
%			\langle\nabla_vX,u\rangle=-\langle\nabla_uX,v\rangle=0.
%		\]
%		Thus $\Image \nabla X =( \ker \nabla X )^\perp$.
%		, and in fact, $\nabla X|_{( \ker \nabla X )^\perp}:( \ker \nabla X )^\perp\to( \ker \nabla X )^\perp$ is an isomorphism. 
		Now because $X$ is not an element of $( \ker \nabla X )^\perp$ or $\mathcal{Y}$ at $p$, we have 
		\[
			\dim(\Image \nabla X + \mathcal{Y} ) \leq \dim( ( \ker \nabla X )^\perp + \mathcal{Y} ) \leq n - 1.
		\]
		Thus, applying this inequality to Equation \eqref{eq:kernel}, we have that $\ker ( P^\perp \circ \nabla X)$ is at least $k+1$ dimensional.
%		Therefore,
%		\begin{align*}
%			\dim ( \ker ( P^\perp \circ \nabla X) ) &= \dim ( \ker \nabla X ) + \dim ( \ker P^\perp \cap \Image \nabla X ) \\
%			&= \dim ( \ker \nabla X ) + \dim ( \mathcal{Y} \cap ( \ker \nabla X )^\perp )\\
%			&= \dim ( \ker \nabla X ) + \dim ( \mathcal{Y} ) + \dim ( ( \ker \nabla X )^\perp ) - \dim ( \mathcal{Y} + ( \ker \nabla X )^\perp )\\
%			&\geq \dim ( \ker \nabla X ) + k + ( n - \dim ( \ker \nabla X ) ) - ( n - 1 )\\
%			&= k + 1.
%		\end{align*}
		Hence, we can choose orthonormal vectors $v_1,\dots, v_k$ so that $X,v_1,\dots, v_k$ is an orthogonal basis of $\ker ( P^\perp \circ \nabla X)$, and by Inequality \eqref{eq:curv}, we have $\sum_{i=1}^k R(v_i,X,X,v_i)\leq 0$.
		Therefore, we have proven Proposition \ref{prop:lindep} by contraposition.
	\end{proof}
	
	We now use Proposition \ref{prop:lindep} to prove Proposition \ref{result:torusfixedpt}, which we restate here for convenience:
	
	\begin{proposition}\label{prop:torusfixedpt}
		Suppose $M^n$ is a closed $n$-dimensional Riemannian manifold with $\Ric_k>0$ for some $k\in\{2,\dots,n-1\}$. 
		If a torus $T^r$ of rank $r\geq k+1$ acts by isometries on $M^n$, then there is a codimension-$k$ torus subgroup $T^{r-k}\subset T^r$ such that the $T^{r-k}$-action on $M^n$ has a fixed point.
	\end{proposition}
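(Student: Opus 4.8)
The plan is to show that the conclusion is equivalent to producing a \emph{single} point of $M^n$ whose $T^r$-orbit has dimension at most $k$, and then to produce such a point via Proposition \ref{prop:lindep}. For the reduction: if $p\in M^n$ satisfies $\dim(T^r\cdot p)\le k$, then the identity component $(T^r_p)_0$ of the isotropy group at $p$ is a subtorus of $T^r$ of rank $r-\dim(T^r\cdot p)\ge r-k$, so it contains a subtorus $T^{r-k}\subseteq(T^r_p)_0$ of rank exactly $r-k$. This $T^{r-k}$ fixes $p$, and since the quotient of a torus by a connected closed subgroup is again a torus, $T^r/T^{r-k}\cong T^k$, so $T^{r-k}$ has codimension $k$. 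Thus it suffices to find an orbit of dimension $\le k$.

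Suppose, toward a contradiction, that every $T^r$-orbit has dimension $\ge k+1$. Let $\mathfrak t$ denote the Lie algebra of $T^r$; for $X\in\mathfrak t$ let $X^*$ be the induced Killing field on $M^n$, and for $p\in M^n$ set $\mathfrak t_p=\{X\in\mathfrak t:X^*|_p=0\}$, so $\dim\mathfrak t_p=r-\dim(T^r\cdot p)\le r-k-1$. Because $T^r$ is abelian, two points have the same isotropy subgroup as soon as they have the same orbit type, and a smooth action of a compact Lie group on a compact manifold has only finitely many orbit types; hence $\{\mathfrak t_p:p\in M^n\}$ is a finite collection of subspaces of $\mathfrak t$, each of dimension at most $r-k-1$. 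Since $r\ge k+1$, the Grassmannian $\mathrm{Gr}(k+1,\mathfrak t)$ is nonempty and irreducible, and for each $p$ the locus of $(k+1)$-planes meeting $\mathfrak t_p$ nontrivially is a proper subvariety (the dimensions satisfy $(k+1)+\dim\mathfrak t_p\le r$, so a generic $(k+1)$-plane $V$ has $V\cap\mathfrak t_p=\{0\}$). A finite union of proper subvarieties cannot cover $\mathrm{Gr}(k+1,\mathfrak t)$, so there is a $(k+1)$-dimensional subspace $V\subseteq\mathfrak t$ with $V\cap\mathfrak t_p=\{0\}$ for every $p\in M^n$.

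Now fix a basis $X_1,\dots,X_{k+1}$ of $V$. These are commuting Killing fields, since the $T^r$-action is abelian, and at every $p\in M^n$ the evaluation map $V\to T_pM^n$, $X\mapsto X^*|_p$, has kernel $V\cap\mathfrak t_p=\{0\}$, so $X_1^*|_p,\dots,X_{k+1}^*|_p$ are linearly independent. This contradicts Proposition \ref{prop:lindep}, which asserts that $k+1$ commuting Killing fields on a closed manifold with $\Ric_k>0$ must be linearly dependent at some point. Hence some orbit has dimension $\le k$, and by the first paragraph the proposition follows.

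The only ingredient beyond the reduction and Proposition \ref{prop:lindep} is the finiteness of the set of isotropy subgroups of a torus action, which is what lets one replace ``avoid $\mathfrak t_p$ for all $p\in M^n$'' by ``avoid finitely many subspaces''; this is the step I would state most carefully, and it is the only real obstacle. One can also avoid it by induction on $r$: the base case $r=k+1$ is Proposition \ref{prop:lindep} applied directly to a basis of $\mathfrak t$; for the inductive step, if no orbit has dimension $\le k$, apply Proposition \ref{prop:lindep} to $k+1$ action fields that are independent at a principal point to obtain $0\ne Y\in\mathfrak t$ with $Y^*$ vanishing at some $q$, let $S^1$ be a circle in the (torus) closure of $\exp(\mathbb R Y)$ — which fixes $q$ — and pass to the totally geodesic submanifold $N=\mathrm{Fix}(S^1,M^n)\ni q$. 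If $\dim N\le k$ one concludes directly; otherwise $\dim N\ge k+1$ forces $\Ric_k(N)>0$, and the induced $T^r/S^1\cong T^{r-1}$-action on $N$ (with $r-1\ge k+1$) has, by the inductive hypothesis, a codimension-$k$ subtorus fixing a point of $N$, whose preimage in $T^r$ is the desired codimension-$k$ subtorus.
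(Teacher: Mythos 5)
Your main argument is correct, and it is a genuinely different route from the paper's.  The paper proves this by induction on $r$: the base case $r=k+1$ applies Proposition~3.1 directly, and the inductive step produces a circle $T^1$ with a fixed point, passes to a totally geodesic fixed-point component $N$, and distinguishes the cases $\dim N\le k$ (conclude directly) versus $\dim N\ge k+1$ (so $\operatorname{Ric}_k(N)>0$, apply the inductive hypothesis to $T^r/T^1$ acting on $N$, and lift).  You instead argue globally and non-inductively: reduce the claim to finding one orbit of dimension $\le k$; assume all orbits have dimension $\ge k+1$, so every isotropy algebra $\mathfrak t_p$ satisfies $\dim\mathfrak t_p\le r-k-1$; invoke finiteness of orbit types for a compact group acting smoothly on a compact manifold, so (since $T^r$ is abelian) there are only finitely many isotropy algebras; and then a generic $(k+1)$-plane $V\subset\mathfrak t$ misses all of them, yielding $k+1$ commuting Killing fields that are independent at every point, contradicting Proposition~3.1.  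The trade-off: your argument sidesteps the induction and the dimension case analysis on $N$, but it leans on the (not entirely elementary) finiteness-of-orbit-types theorem and a general-position argument in the Grassmannian; the paper's inductive proof uses only Proposition~3.1 and the fact that fixed-point sets of isometric actions are totally geodesic, at the cost of a more careful case split.  Your closing ``alternative by induction'' paragraph is essentially the paper's argument, so you have in effect found both proofs.
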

	
	\begin{proof}
		Fix $k\geq 2$. We will prove Proposition \ref{prop:torusfixedpt} by induction on $r$.
		Given a natural number $r$, let $\mathfrak{t}^r$ denote the Lie algebra for the torus $T^r$. 
		Given a $T^r$-action an a manifold $M$ and a point $p\in M$, let $K_p:\mathfrak{t}^r\to T_pM$ denote the linear map such that for any $z\in \mathfrak{t}^r$, $K_p(z)$ is the evaluation at $p$ of Killing field induced by $z$ on $M$ via the $T^r$-action.  
		
		For the base case, $r=k+1$, suppose that $T^{k+1}$ acts isometrically on $(M^n,g)$. Choose a basis $x_1,\dots,x_{k+1}$ for $\mathfrak{t}^{k+1}$. Then the action fields $X_1,\dots,X_{k+1}$ are commuting Killing fields on $M^n$. So by Proposition \ref{prop:lindep}, the fields $X_1,\dots,X_{k+1}$ must be linearly dependent at some point $p\in M^n$. Thus, the kernel of the linear map $K_p:\mathfrak{t}^{k+1}\to T_pM^n$ must be at least $1$-dimensional.
		Thus, there is a circle subgroup $T^1\subset T^{k+1}$ that fixes the point $p$.
		
		For the sake of induction, suppose there exists $r_0\geq k+2$ such that the statement of Proposition \ref{prop:torusfixedpt} holds for all $r\leq r_0-1$. We will now show the same conclusion holds for $r=r_0$.
		
		Suppose $T^{r_0}$ acts isometrically on $(M^n,g)$. Choose a linearly independent set of vectors $x_1,\dots,x_{k+1}\in\mathfrak{t}^{r_0}$. By Proposition \ref{prop:lindep}, the action fields $X_1,\dots,X_{k+1}$ must be linearly dependent at some point $p\in M^n$. 
		As before, the kernel of the map $K_p:\mathfrak{t}^{r_0}\to T_pM^n$ is at least $1$-dimensional, and we have a circle subgroup $T^1\subset T^{r_0}$ that fixes $p$. 
		Let $N$ be a connected component of this fixed point set for the $T^1$-action on $M^n$. Then $N$ is totally geodesic in $(M^n,g)$ and is invariant under the $T^{r_0}$-action. If $\dim(N)\leq k$, then because $r_0>k$, the kernel of the $T^{r_0}$-action on $N$ must contain a torus subgroup of dimension at least $r_0-k$, and the result follows. If $\dim(N)\geq k+1$, then because $N$ is totally geodesic in $(M^n,g)$, we have $\Ric_k(N)>0$. Thus induction hypothesis applies to the action of $T^{r_0-1}\defeq T^{r_0}/T^1$ on $N$. So there is a codimension-$k$ subtorus $T^{r_0-1-k}\subset T^{r_0-1}$ that has a fixed point in $N$. Therefore, because $N$ is fixed by $T^1$, the torus $T^{r_0-k}\defeq T^1\times T^{r_0-1-k}$ has a fixed point in $M^n$. 
	\end{proof}

	\section{Symmetry rank bound}\label{sec:symrankbound}
	
	In this section, we prove the symmetry rank bounds from Theorem \ref{result:main} and Proposition \ref{result:symrankbound}.
	In particular, we intend to prove the following:
	
	\begin{proposition}\label{prop:symrankbound}
		Suppose $(M^n,g)$ is a closed, connected, $n$-dimensional Riemannian manifold with $\Ric_k>0$ for some $k\in\{1,\dots,n-1\}$.
		\begin{enumerate}
			\item \label{item:GS} If $k=1$, then $\symrank(M^n,g)\leq\left\lfloor\frac{n+1}{2}\right\rfloor$. \cite{maxsymrank}
			
			\item \label{item:2odd} If $n$ is odd and $k=2$, then $\symrank(M^n,g)\leq\left\lfloor\frac{n+1}{2}\right\rfloor$. \cite{localsymrank}
			
			\item \label{item:odd} If $n$ is odd and $k\geq 3$, then $\symrank(M^n,g)\leq \left\lfloor\frac{n+k}{2}\right\rfloor-1$.
			
			\item \label{item:even} If $n$ is even and $k\geq 2$, then $\symrank(M^n,g)\leq \left\lfloor\frac{n+k}{2}\right\rfloor-1$.
		\end{enumerate}
	\end{proposition}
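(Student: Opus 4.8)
Parts \ref{item:GS} and \ref{item:2odd} are exactly the theorems of \cite{maxsymrank} and \cite{localsymrank} --- for the latter, note that $n$ odd and $k=2$ give $\lfloor\tfrac{n+k}{2}\rfloor=\lfloor\tfrac{n+1}{2}\rfloor$, so this is just the pointwise bound of \cite{localsymrank}. For parts \ref{item:odd} and \ref{item:even} the plan is an induction on $n$, run in parallel for the odd and the even cases. First dispose of the boundary values $k\in\{n-2,n-1\}$: there $\Ric_k>0$ forces $\Ric_{n-1}=\Ric>0$, so the result of Pak and Parker (Remark \ref{rmk:PakParker}) gives $\symrank(M^n)\le n-2=\lfloor\tfrac{n+(n-2)}{2}\rfloor-1\le\lfloor\tfrac{n+k}{2}\rfloor-1$; these cases also anchor the induction. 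So assume $3\le k\le n-3$ (resp. $2\le k\le n-3$) and let $T^r$ act effectively and by isometries on $M^n$. If $r\le k$ then $r\le k\le\lfloor\tfrac{n+k}{2}\rfloor-1$, so assume $r\ge k+1$.

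By Proposition \ref{result:torusfixedpt} there is a subtorus $T^{r-k}\subseteq T^r$ with a fixed point $p$; let $N$ be the component through $p$ of its fixed-point set, a closed, connected, totally geodesic, $T^r$-invariant submanifold. Since $T^{r-k}$ acts on $\nu_pN$ with no nonzero fixed vector, $d:=\codim N$ is even (so $\dim N\equiv n\pmod 2$) and $r-k\le d/2$. Let $T^s$ be the identity component of the kernel of the $T^r$-action on $N$; then $T^{r-k}\subseteq T^s$ and $T^s$ fixes $N$ pointwise, and since the component through $p$ of the fixed-point set of $T^s$ lies between $N$ and that of $T^{r-k}$, it equals $N$, so $T^s$ too acts on $\nu_pN\cong\mathbb R^{d}$ with no trivial summand and $s\le d/2$. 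As the quotient torus $T^{r-s}$ acts effectively on $N$, we get $r\le s+\symrank(N^m)$, where $m:=\dim N$.

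If $m\ge k+1$, then $\Ric_k(N^m)>0$ (totally geodesic), and $\dim N\equiv n\pmod 2$ places $N^m$ in the scope of the induction hypothesis (using the Pak--Parker anchor when $k\in\{m-2,m-1\}$, which applies since then $m\ge 4$), giving $\symrank(N^m)\le\lfloor\tfrac{m+k}{2}\rfloor-1$; if $m\le k$, trivially $\symrank(N^m)\le m$. Feeding this into $r\le s+\symrank(N^m)\le d/2+\symrank(N^m)$ and using that $d=n-m$ is an even integer, a short arithmetic check (in which the parities of $n-m$ and of $n+k$ interlock) yields $r\le\lfloor\tfrac{n+k}{2}\rfloor-1$ in all cases except possibly when $r=\lfloor\tfrac{n+k}{2}\rfloor$; in that case the same check forces $m\le k$, $m\in\{k-1,k\}$, and $s=d/2$.

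It remains to rule out this tight case. Since $s=d/2$ equals the number of two-dimensional irreducible summands of the $T^s$-representation on $\nu_pN$, the weights of that representation form a basis of $(\mathfrak t^s)^*$, so some circle $T^1\subseteq T^s$ acts nontrivially on exactly one of those summands. Then the component $N'$ through $p$ of the fixed-point set of $T^1$ is a closed, totally geodesic, $T^r$-invariant submanifold of codimension exactly $2$, and since $k\le n-3=\dim N'-1$ we have $\Ric_k(N')>0$. The kernel of the $T^r$-action on $N'$ contains $T^1$ and acts faithfully on $\nu_pN'\cong\mathbb R^2$, hence has rank exactly $1$, so $T^{r-1}$ acts effectively on $N'$; the induction hypothesis (dimension $n-2<n$, same parity, same $k$) gives $r-1\le\lfloor\tfrac{(n-2)+k}{2}\rfloor-1=\lfloor\tfrac{n+k}{2}\rfloor-2$, contradicting $r=\lfloor\tfrac{n+k}{2}\rfloor$. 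This closes the induction. The routine content is the inequality $r\le s+\symrank(N^m)$ together with the elementary bounds on $\symrank(N^m)$; the main obstacle is extracting the last unit of improvement over the pointwise bound $\lfloor\tfrac{n+k}{2}\rfloor$ of \cite{localsymrank}, and within that the tight case --- where $N^m$ carries no useful curvature lower bound and one must instead exploit the rigidity of a maximal torus acting on the normal representation to manufacture a codimension-$2$ totally geodesic submanifold on which to close the induction.
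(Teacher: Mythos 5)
Your proof is correct, and it follows the same overall scheme as the paper: induction on $n$, with the Pak--Parker cohomogeneity-one obstruction anchoring the base cases and handling the boundary values $k\in\{n-2,n-1\}$, Proposition~\ref{result:torusfixedpt} producing a totally geodesic fixed-point component, a bound on the rank of the torus acting on its normal space, a dichotomy on whether the component is large enough to carry $\Ric_k>0$, and a final codimension-two trick to shave off the last unit in the tight case. The differences are in the bookkeeping and in that last step. Where you pass directly to the subtorus $T^{r-k}$ and then enlarge it to the full identity-component kernel $T^s$ of the $T^r$-action on $N$, the paper instead takes $T^{r-l}$ with $l=\min\{m:M^{T^{r-m}}\neq\varnothing\}$ minimal, which automatically ensures that no circle in the residual torus $T^l$ has a fixed point in $F$ and gives the cleaner inequality $l\leq f$ in the low-dimensional case. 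For the equality case, your argument extracts from the normal representation of $T^s$ (whose weights form a basis of $(\mathfrak t^s)^*$) a circle $T^1\subseteq T^s$ whose fixed component $N'$ has codimension two in $M^n$, and applies the induction hypothesis to $N'$ at dimension $n-2$ with the torus $T^r/\ker$ of rank $r-1$; the paper instead goes \emph{up} from $F^f$, choosing a corank-one subtorus $T^{r-l-1}\subset T^{r-l}$ whose fixed component $F'\supset F$ has dimension $f+2$, and applies the induction hypothesis to $F'$ with the torus $T^{l+1}$. Both variants rest on the same representation-theoretic input (a maximal-rank torus acting on the normal space has independent weights), and both land on the same estimate; your version has the mild advantage that the dimension $n-2\geq 4$ bound for the induction is automatic, while the paper has to check $f+2\geq 4$ separately, at the cost of needing the observation that the kernel of the $T^r$-action on $N'$ has rank exactly one.
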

	
	Notice if $k=2$, then the symmetry rank bound provided in Item \eqref{item:even} of Proposition \ref{prop:symrankbound} is equal to $\lfloor\frac{n+1}{2}\rfloor$.
	It follows that Proposition \ref{prop:symrankbound} is equivalent to the symmetry rank bounds stated in Theorem \ref{result:main} and Proposition \ref{result:symrankbound}.

	\begin{example}\label{ex:GS}
		Recall if a Riemannian manifold has positive sectional curvature, then it has $\Ric_k>0$ for any $k\geq 1$.
		As shown in \cite{maxsymrank}, spheres, real or complex projective spaces, and lens spaces all admit metrics with positive sectional curvature and symmetry rank equal to $\lfloor\frac{n+1}{2}\rfloor$; see Example \ref{ex:grovesearle} for more information.
		The symmetry rank upper bounds provided in Proposition \ref{prop:symrankbound} are equal to $\lfloor\frac{n+1}{2}\rfloor$ if $n$ is odd and $k\leq 4$, or if $n$ is even and $k\leq 3$. 
		Thus, the symmetry rank bound from Proposition \ref{prop:symrankbound} is optimal in these cases.
	\end{example}
	
	\begin{example}\label{ex:prod}
		Given a product of spheres $S^n\times S^m$, let $g_{n,m}$ denote the Riemannian product metric associated with the standard round metrics on each factor.
		As mentioned in Example \ref{ex:product}, the Riemannian product $(S^n\times S^m,g_{n,m})$ has $\Ric_k>0$ only for $k\geq \max\{n+1,m+1\}$ and $\symrank=\lfloor\frac{n+1}{2}\rfloor+\lfloor\frac{m+1}{2}\rfloor$.
		Thus the following have maximal symmetry rank for the appropriate dimension and value of $k$ by Proposition \ref{prop:symrankbound}:
		\begin{itemize}
			\item $\Ric_3(S^2\times S^2,g_{2,2})>0$, and $\symrank(S^2\times S^2,g_{2,2})=2$,  
			\item $\Ric_4(S^3\times S^2,g_{3,2})>0$, and $\symrank(S^3\times S^2,g_{3,2})=3$, 
			\item $\Ric_4(S^3\times S^3,g_{3,3})>0$, and $\symrank(S^3\times S^3,g_{3,3})=4$.
		\end{itemize}
		From the last example, it follows that the symmetry rank bound in Proposition \ref{prop:symrankbound} is also optimal in dimension $6$ with $k=4,5$.
	\end{example}
	
	\begin{example}\label{ex:S3xS3symrank}
		As we described in Example \ref{ex:S3xS3}, $S^3\times S^3$ admits a metric which has $\Ric_2>0$ and $\symrank=3$, which is maximal symmetry rank in dimension $6$ for $\Ric_2>0$ by Proposition \ref{prop:symrankbound}.
	\end{example}
	
	Now, we will prove Proposition \ref{prop:symrankbound} for fixed values of $k$ by using induction on the dimension $n$. For the base cases, we will use the following:
	
	\begin{lemma}\label{lem:Riccohom2}
		A closed manifold of dimension $n\geq 4$ cannot support a metric of positive Ricci curvature that is invariant under an effective action by a torus of rank $n-1$. 
	\end{lemma}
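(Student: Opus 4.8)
\emph{Proof proposal.} The plan is to reduce the statement to the structure theory of cohomogeneity-one torus actions and then rule out every possibility using Bochner's theorem, which gives $b_1(M^n) = 0$ for a closed Riemannian manifold with $\Ric > 0$. We may assume $M^n$ is connected, since the connected group $T^{n-1}$ preserves each component. Thus it suffices to show: \emph{if $T^{n-1}$ acts effectively on a closed connected manifold $M^n$ with $n \ge 4$, then $b_1(M^n) \ge 1$.}

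First I would show such an action has cohomogeneity one. Let $H$ be the isotropy group at a principal point $p$. By the slice theorem an invariant tubular neighborhood of the principal orbit is equivariantly diffeomorphic to the twisted product $T^{n-1} \times_H D^m$, and since at a principal point the slice representation is trivial, $H$ acts trivially on $D^m$. As $T^{n-1}$ is abelian, a direct computation in the twisted product shows that $H$ then acts trivially on this entire neighborhood; the fixed-point set of a compact group action is a closed submanifold, so, containing a nonempty open set, it is all of $M^n$, and effectiveness forces $H$ to be trivial. Hence the principal orbits are embedded copies of $T^{n-1}$ of codimension one, and the orbit space $M^n/T^{n-1}$ is a compact connected $1$-manifold, i.e. a circle or a closed interval.

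If the orbit space is a circle, then all orbits are principal, $M^n \to M^n/T^{n-1} = S^1$ is a fiber bundle with fiber $T^{n-1}$, and the homotopy exact sequence yields a surjection $\pi_1(M^n) \twoheadrightarrow \pi_1(S^1) \cong \mathbb{Z}$, so $b_1(M^n) \ge 1$. If the orbit space is an interval, I would write $M^n = M_- \cup M_+$, where $M_\pm$ is an invariant tubular neighborhood of the non-principal orbit $B_\pm$ lying over the corresponding endpoint. Then $M_\pm$ deformation retracts onto $B_\pm$, and $M_- \cap M_+$ deformation retracts onto a principal orbit $P \cong T^{n-1}$, which the unit normal sphere bundle of $B_\pm$ exhibits as an $S^{l_\pm - 1}$-bundle over $B_\pm$, where $l_\pm \defeq \codim B_\pm$ and the fibers carry the isotropy action of $K_\pm \defeq G_q$ for $q \in B_\pm$. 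Since a principal orbit near $B_\pm$ is free, this $K_\pm$-action on $S^{l_\pm - 1}$ is free; since the orbit space of $M^n$ near $B_\pm$ is a half-open interval, the quotient $S^{l_\pm - 1}/K_\pm$ is a single point, so the action is transitive. Hence $S^{l_\pm - 1}$ is diffeomorphic to the compact abelian Lie group $K_\pm$, which forces $l_\pm \in \{1, 2\}$, with $K_\pm \cong \mathbb{Z}/2$ or $K_\pm \cong S^1$ respectively. In either case $B_\pm \cong T^{n-1}/K_\pm$ is a torus of dimension $n - l_\pm$ and $H_1(P;\mathbb{Q}) \to H_1(B_\pm;\mathbb{Q})$ is surjective, so the Mayer--Vietoris sequence identifies $H_1(M^n;\mathbb{Q})$ with the cokernel of $H_1(P;\mathbb{Q}) \to H_1(B_-;\mathbb{Q}) \oplus H_1(B_+;\mathbb{Q})$, whose dimension is at least $(n - l_-) + (n - l_+) - (n - 1) = n + 1 - l_- - l_+ \ge n - 3 \ge 1$ using $n \ge 4$. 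In every case $b_1(M^n) \ge 1$, contradicting $\Ric > 0$.

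The technical heart is the middle step — verifying that the action is cohomogeneity one and pinning down the codimensions $l_\pm \in \{1, 2\}$; the remainder is a routine application of Mayer--Vietoris and the long exact homotopy sequence. This is precisely the information carried by the classifications of $T^{n-1}$-actions on $M^n$ due to Pak \cite{pak} and Parker \cite{parker} (the latter in dimension $4$), which one could cite directly instead; compare Remark \ref{rmk:PakParker}.
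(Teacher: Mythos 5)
Your proposal is correct, and it tracks the same conceptual route as the paper: reduce to the structure of a cohomogeneity-one torus action, and then use a curvature-implies-small-$\pi_1$ theorem to derive a contradiction. The difference is that where the paper delegates the key topological fact --- a closed $n$-manifold, $n\geq 4$, admitting a cohomogeneity-one torus action has infinite fundamental group --- to citations of Pak and Parker, and then closes with Bonnet--Myers, you reprove that fact from scratch: principal isotropy is trivial because the slice representation at a principal point is trivial and the torus is abelian, so the orbit space is $S^1$ or an interval; the $S^1$ case surjects $\pi_1(M)$ onto $\mathbb{Z}$; the interval case forces the singular isotropy groups $K_\pm$ to be compact abelian Lie groups diffeomorphic to spheres, hence $\mathbb{Z}/2$ or $S^1$, and a Mayer--Vietoris count then gives $b_1(M;\mathbb{Q})\geq n+1-l_--l_+\geq n-3\geq 1$. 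You then close with Bochner ($\Ric>0\Rightarrow b_1=0$) rather than Bonnet--Myers, which is an equally valid final move since your orbit-space analysis produces $b_1\geq 1$ directly. What your version buys is a self-contained, citation-free argument; what the paper's version buys is brevity. Your final paragraph already notes the equivalence to citing Pak and Parker, so you are aware you have reconstructed their theorem.
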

	
	\begin{remark}\label{rmk:PakParker}
		Lemma \ref{lem:Riccohom2} follows from the work of Pak in \cite{pak} and Parker in \cite{parker}, who showed that in dimensions $\geq 4$, closed manifolds which admit cohomogeneity-one torus actions must have infinite fundamental group. Thus by the Bonnet-Myers theorem, such manifolds cannot admit invariant metrics of positive Ricci curvature. 
	\end{remark}
	
	Next, we recall the following:
	
	\begin{proposition}[Proposition 8.3.8 in \cite{petersen}]\label{prop:mincodim}
		Let $M$ be compact and assume that $X$ and $Y$ are commuting Killing fields on $M$. 
		\begin{enumerate}
			\item $Y$ is tangent to the level sets of $|X|^2$ and, hence, to the zero set of $X$.
			\item If $X$ and $Y$ both vanish on a totally geodesic submanifold $N\subset M$, then some linear combination of $X$ and $Y$ vanishes on a submanifold in $M$ of dimension larger than $N$.
		\end{enumerate}
	\end{proposition}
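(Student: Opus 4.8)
The plan is to handle the two parts separately, using standard facts about Killing fields and their zero sets.

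For part (1) I would resist computing $Y\bigl(|X|^2\bigr)$ by hand with covariant derivatives: the skew-symmetry of $\nabla X$ together with $\nabla_YX=\nabla_XY$ only reproduces the identity one is trying to prove. Instead I would pass to Lie derivatives. Since $Y$ is Killing, $\mathcal L_Yg=0$, and since $X$ and $Y$ commute, $\mathcal L_YX=[Y,X]=0$; hence
\[
	Y\bigl(|X|^2\bigr)=\mathcal L_Y\bigl(g(X,X)\bigr)=(\mathcal L_Yg)(X,X)+2\,g(\mathcal L_YX,X)=0 ,
\]
so $Y$ is tangent to every level set of $|X|^2$. The zero set $\{X=0\}=\{|X|^2=0\}$ is one such level set, and it is a totally geodesic submanifold whose tangent space at a zero $p$ is $\ker(\nabla X|_p)$; tangency of $Y$ there also follows directly from $\nabla_{Y_p}X=\nabla_{X_p}Y+[Y,X]_p=0$ (using $X_p=0$), which gives $Y_p\in\ker(\nabla X|_p)$.

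For part (2), fix $p\in N$ and set $V\defeq(T_pN)^{\perp}\subseteq T_pM$, with $d\defeq\dim V=\codim N\geq1$ (otherwise $N=M$ and there is nothing to prove). Because $X$ vanishes identically on $N$, the skew-symmetric endomorphism $A\defeq\nabla X|_p:T_pM\to T_pM$, $v\mapsto\nabla_vX$, annihilates $T_pN$, hence maps $T_pM$ into $(T_pN)^{\perp}=V$ and restricts to a skew-symmetric endomorphism of $V$; likewise $B\defeq\nabla Y|_p$. These restrictions commute: the flows of $X$ and $Y$ fix $p$ and act on $T_pM$ as the one-parameter groups $\exp(tA)$ and $\exp(sB)$, and these commute because the flows do. The heart of the argument is then a linear-algebra claim: there is $(a,b)\neq(0,0)$ with $\ker(aA+bB)\cap V\neq0$. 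To get it, simultaneously normalize the commuting skew pair on $V$, writing $V=V_0\oplus V_1\oplus\cdots\oplus V_m$, where $V_0=\ker A\cap\ker B\cap V$ and each $V_j$ ($j\geq1$) is a $2$-plane carrying a complex structure $J_j$ with $A|_{V_j}=\alpha_jJ_j$, $B|_{V_j}=\beta_jJ_j$ and $(\alpha_j,\beta_j)\neq(0,0)$. If $V_0\neq0$ (in particular whenever $d$ is odd, since a skew map on an odd-dimensional space is singular) take $(a,b)=(1,0)$; otherwise $d\geq2$, and for a block $V_1$ the choice $(a,b)=(\beta_1,-\alpha_1)\neq(0,0)$ gives $(aA+bB)|_{V_1}=0$.

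To conclude, set $Z\defeq aX+bY$, a Killing field vanishing on all of $N$. If $Z\equiv0$ the statement is trivial (take $M$ itself). Otherwise $Z$ has a zero at $p$, and the component of $\{Z=0\}$ through $p$ is a totally geodesic submanifold with tangent space $\ker(\nabla Z|_p)=T_pN\oplus\bigl(\ker(aA+bB)\cap V\bigr)$, which strictly contains $T_pN$; its dimension therefore exceeds $\dim N$. The only step with any real content is the even-codimension case of the linear-algebra claim: since the zero set of a single Killing field always has even codimension, when $\codim N$ is even one genuinely cannot use $X$ or $Y$ alone and must exhibit a combination — which the simultaneous block-normalization of $(A,B)$ supplies, reducing the matter to finding a real zero of a product of linear forms in $(a,b)$. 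Everything else (that $\mathcal L_YX=[Y,X]$, the structure of zero sets of Killing fields, and the linearization of an isometric flow at a fixed point) is standard.
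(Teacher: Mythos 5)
This proposition is not proved in the paper; it is quoted verbatim from Petersen's textbook (Proposition 8.3.8), so there is no ``paper's own proof'' to compare against. That said, your argument is correct, and it matches the standard proof in spirit. For part (1), the Lie-derivative computation $Y(|X|^2)=(\mathcal L_Yg)(X,X)+2g(\mathcal L_YX,X)=0$ is exactly the textbook move, equivalent to Petersen's observation that the flow of $Y$, being an isometry commuting with the flow of $X$, preserves $|X|^2$; your remark that the naive $\nabla$-computation is circular is a fair warning. For part (2), Petersen's argument is usually phrased via the closure in $\Iso(M)$ of the $2$-parameter group generated by $X$ and $Y$ and its linear isotropy representation on the normal space $V=(T_pN)^\perp$, whereas you work purely at the Lie-algebra level: $A=\nabla X|_p$ and $B=\nabla Y|_p$ are commuting skew-symmetric endomorphisms of $V$, and simultaneous block-normalization produces either a joint kernel ($V_0\neq0$, take $a X$) or a $2$-plane $V_1$ with $A|_{V_1}=\alpha_1J_1$, $B|_{V_1}=\beta_1J_1$, on which $\beta_1 A-\alpha_1 B$ vanishes; either way $\ker\nabla(aX+bY)|_p$ strictly contains $T_pN$, so the (totally geodesic) zero set of $aX+bY$ through $p$ is strictly larger than $N$. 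The two approaches are equivalent, but yours is slightly cleaner in that it never needs the closure of the flow group to be a torus; you do implicitly use completeness of $M$ (via the standard structure theorem that the zero set of a Killing field is totally geodesic with tangent space $\ker\nabla Z|_p$), which is where the compactness hypothesis enters. One small imprecision: the parenthetical ``$V_0\neq0$ whenever $d$ is odd, since a skew map on an odd-dimensional space is singular'' is not by itself enough (singularity of $A$ and of $B$ separately does not give a \emph{joint} kernel); the correct justification is that $d=\dim V_0+2m$, so odd $d$ forces $\dim V_0$ odd, hence nonzero. This does not affect the proof, since you dispatch the $V_0=0$ case via the blocks anyway.
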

	
	It follows from Proposition \ref{prop:mincodim} that if $N\subset M$ is a connected component of a fixed point set for an isometric $T^2$-action on $M$, then there is a circle subgroup $S^1\subset T^2$ such that the component of its fixed point set which contains $N$ has codimension $<\codim(N)$.
	
	We are now ready to prove Proposition \ref{prop:symrankbound}
	
	\begin{proof}[Proof of Proposition \ref{prop:symrankbound}]
		Part \eqref{item:GS} was established by Grove and Searle in \cite{maxsymrank}. 
		Part \eqref{item:2odd} follows from \cite{localsymrank}, in which the author shows that if a manifold has $\Ric_k>0$ at a point, then $\symrank(M^n,g)\leq\lfloor\frac{n+k}{2}\rfloor$.
		
		We will prove Parts \eqref{item:odd} and \eqref{item:even} of Proposition \ref{prop:symrankbound} using induction on the dimension $n$. First, we establish the base cases, dimensions $n=4$ and $n=5$. If a $4$-dimensional manifold $M^4$ has $\Ric_2>0$ or $\Ric_3>0$ and $T^3$ acts isometrically on $M^4$, then the kernel of the action must contain a circle subgroup by Lemma \ref{lem:Riccohom2}. Similarly, if a $5$-dimensional manifold $M^5$ has $\Ric_3>0$ or $\Ric_4>0$ and $T^4$ acts isometrically on $M^5$, then again the kernel of the action must contain a circle by Lemma~\ref{lem:Riccohom2}. 
		
		Now, for the sake of induction, suppose that for some $n\geq 6$, Proposition \ref{prop:symrankbound} holds for all dimensions $\dim(M)\in\{4,5,\dots,n-1\}$. We wish to show that it holds for $\dim(M)=n$. So suppose $M^n$ is either odd-dimensional with $\Ric_k>0$ for some $k\geq 3$, or $M^n$ is even-dimensional with $\Ric_k>0$ for some $k\geq 2$, and assume a torus $T^r$ of rank $r$ acts isometrically and effectively on $M^n$.
		We will show that $r \leq \lfloor \frac{ n + k }{ 2 } \rfloor - 1$.
		
		If $k\geq n-2$, then $\lfloor \frac{ n + k }{ 2 } \rfloor - 1 = n-2$, and $r \leq n-2$ by Lemma \ref{lem:Riccohom2}.
		
		Now assume instead that $k\leq n-3$.
		Then $\lfloor \frac{ n + k }{ 2 } \rfloor - 1 \geq k$, so if $r\leq k$, then we are done.
		So assume $r\geq k+1$.
		Then by Proposition \ref{result:torusfixedpt}, there exists a codimension $k$ torus of $T^r$ which has a nonempty fixed point set in $M^n$.
		Define 
		\[
		l = \min\{ m : \text{there exists }T^{r-m}\subseteq T^r \text{ such that } M^{T^{r-m}}\neq \varnothing\},
		\]
		and let $T^{r-l}$ be a torus subgroup that realizes this minimum.
		Then by Proposition \ref{result:torusfixedpt}, $l\leq k$.
		Let $F^f$ be a component of the fixed point set $M^{T^{r-l}}$.
		Then $T^l \defeq T^r/T^{r-l}$ acts effectively on $F^f$ such that no circle subgroup of $T^l$ has a fixed point.
		Furthermore, $T^{r-l}$ acts on the normal sphere $S^{n-f-1}$ to $F^f$, and because the codimension of $F^f$ is even and $S^{n-f-1}$ has positive sectional curvature, it follows from \cite{maxsymrank} that 
		\[
		r-l\leq \frac{n-f}{2}.
		\]
		
		First, assume $f\geq k+1$, so that $\Ric_k(F^f)>0$.
		Then because $k\geq 2$, we have $f\geq 3$.
		But in the case that $k=2$, $M^n$ is assumed to be even-dimensional, and hence, $F^f$ is also even-dimensional.
		Thus, we have $f\geq 4$ in all cases, and by the induction hypothesis, $l\leq \lfloor \frac{f+k}{2}\rfloor - 1$.
		Then it follows that $r \leq \frac{n-f}{2} + \lfloor \frac{f+k}{2}\rfloor - 1 = \lfloor \frac{ n + k }{ 2 } \rfloor - 1$.
		
		Now assume instead that $f\leq k$.
		Then because no circle subgroup of $T^l$ has a fixed point, we have $l\leq f$.
		Thus $r\leq \frac{n-f}{2} + f \leq \lfloor\frac{n+k}{2}\rfloor$.
		What remains to show is that the equality case cannot occur.
		This case is equivalent to having $r-l = \frac{n-f}{2}$, $l=f$, and $f\geq k-1$.
		In this case, we may choose a subgroup $T^{r-l-1}\subset T^{r-l}$ with a fixed point component $F'$ containing $F$ such that $\dim(F') = f+2$.
		Thus $k+1 \leq \dim(F') \leq n-2$, so $\Ric_k(F')>0$, and $F'$ has an effective $T^{l+1}$-action.
		Recall that $f\geq k-1$ and $k\geq 2$, and in the case that $k=2$, $M^n$ is assumed to be even-dimensional, so $f\geq 2$.
		Thus, in all cases, it follows that $\dim(F') = f+2 \geq 4$.
		So by the induction hypothesis, $l+1 \leq \lfloor\frac{f+2+k}{2}\rfloor - 1$, which implies that $l\leq \lfloor\frac{f+k}{2}\rfloor - 1$.
		Thus, $r \leq \frac{n-f}{2} + \lfloor\frac{f+k}{2}\rfloor - 1 \leq  \lfloor\frac{n+k}{2}\rfloor - 1$.
		Therefore, the result follows.
	\end{proof}	
	
	\section{Connectedness Principle}\label{sec:connprinc}
	
	In this section, we prove Theorem \ref{result:connprinc}. 
	Our argument is a modification of Guijarro and Wilhelm's approach in \cite{softconnprinc}. 
	We begin by setting up notation and terminology.
	Given an embedded submanifold $N$ of $M$, let $\Omega_N$ denote the space of piecewise-smooth curves in $M$, parametrized on the unit interval $[0,1]$, that start and end in $N$. 
	Now define the \textit{energy functional}
	\[
		E:\Omega_N\to [0,\infty), \qquad E(\gamma)\defeq \tfrac{1}{2}\int_0^1|\gamma'(t)|^2\;dt.
	\]
	It follows from the first variation of energy that critical points of the energy functional $E$ are geodesics that start and end perpendicular to $N$.
	To prove Theorem \ref{result:connprinc}, we will apply a Morse theoretic argument using a lower bound for the index of critical points of the energy functional $E$. 
	
	For the remainder of this section, let $\gamma:[0,b]\to M$ be a unit speed geodesic that starts and ends perpendicular to $N$. 
	We will use the ``\textit{index}'' of $\gamma$ to mean the index of $\gamma$ as a critical point of $E$.
	If we vary $\gamma$ by geodesics that start perpendicular to $N$, the corresponding collection of Jacobi fields forms a Lagrangian subspace of the collection of all Jacobi fields along $\gamma$ that are perpendicular to $\gamma'$.
	For this reason, we will now review Lagrangian subspaces of Jacobi fields, the Riccati operator on such subspaces, and the Transverse Jacobi Field Comparison from \cite{softconnprinc}.
	
	\subsection{Transverse Jacobi Field Comparison}
		
%		Let $\gamma:[a,b]\to M^n$ be a unit speed geodesic. 
%		Given Jacobi fields $J_1$ and $J_2$ along $\gamma$ that are orthogonal to $\gamma'$, it follows from the symmetries of the curvature tensor that
%		\[
%			\omega(J_1,J_2)\defeq \langle J_1',J_2\rangle - \langle J_1, J_2'\rangle
%		\]
%		is constant along $\gamma$. 
%		Hence, $\omega$ defines a symplectic form on the vector space of Jacobi fields along $\gamma$ that are orthogonal to $\gamma'$.
%		We say that a subspace of such Jacobi fields is \textit{Lagrangian} if $\omega$ vanishes on the subspace.
%		Notice that the vector space of Jacobi fields orthogonal to $\gamma'$ is $(2n-2)$-dimensional, where $n$ is the dimension of $M^n$.
%		Thus, Lagrangian subspaces are $(n-1)$-dimensional.
%		The Lagrangian 
		
		Let $\gamma:[0,b]\to M$ be a unit speed geodesic that starts and ends perpendicular to an embedded submanifold $N$. 
		Considering geodesic variations of $\gamma$ that leave $N$ orthogonally at $t=0$, we will let $\mathcal{J}_N$ denote the vector space of corresponding Jacobi fields:
		\begin{align}
			\mathcal{J}_N &\defeq \{\text{Jacobi fields along }\gamma\text{ corresponding to variations}\nonumber \\
			&\hspace{21pt}\text{by geodesics that start perpendicular to }N\}.\label{eq:JN}
		\end{align}
		In particular, if we let $S_{\gamma'(0)}:T_{\gamma(0)}N \to T_{\gamma(0)}N$ denote the shape operator of $N$ determined by $\gamma'(0)$, then $J\in \mathcal{J}_N$ if and only if 
		\[
			J(0) \in T_{\gamma(0)}N, \qquad \text{and} \qquad (J'(0))^\top = S_{\gamma'(0)} J(0).
		\]
		Here, $(J'(0))^\top$ denotes the projection of $J'(0)$ to $T_{\gamma(0)}N$. For more information, see Chapter 10 Section 4 of \cite{docarmo}. 
		
		The set of times $t\in[0,b]$ for which $\{ J(t): J\in\mathcal{J}_N\}=\gamma'(t)^\perp$ is open and dense in $[0,b]$.
		Namely, these are the times at which no nontrivial Jacobi fields in $\mathcal{J}_N$ vanish. 
		For these values of $t$, there is a well-defined Riccati operator
		\[
			S_t:\gamma'(t)^\perp \to \gamma'(t)^\perp, \qquad S_t(v)=J_v'(t),
		\]
		where $J_v$ is the unique Jacobi field in $\mathcal{J}_N$ such that $J_v(t)=v$.
		The Jacobi equation can then be decomposed into two first-order equations:
		\[
			S_t J = J', \qquad S_t' + S_t^2 + R_{\gamma'} = 0.
		\]
		Here, $S_t'$ denotes the covariant derivative of $S$ along $\gamma$, and $R$ denotes the directional curvature operator along $\gamma$, namely $R_{\gamma'}(v) \defeq R(v,\gamma')\gamma'$.
		
		Now, given a subspace $\mathcal{V}\subseteq \mathcal{J}_N$ and a time $t\in[0,b]$, we can obtain a subspace $\mathcal{V}(t)\subseteq \gamma'(t)^\perp$ by setting
		\begin{equation}\label{eq:V(t)}
			\mathcal{V}(t) \defeq \{J(t):J\in\mathcal{V}\} \oplus \{J'(t):J\in\mathcal{V}\text{ and }J(t) = 0\}.
		\end{equation}
		The second summand, $\{J'(t):J\in\mathcal{V}\text{ and }J(t) = 0\}$, is trivial for almost all values of $t$, and the subspaces $\mathcal{V}(t)$ vary smoothly along $\gamma(t)$. 
		Next, we recall the following terminology introduced by Guijarro and Wilhelm in \cite{softconnprinc}:
		\begin{definition}
			Given $t\in[0,b]$, we say that a subspace $\mathcal{V}\subseteq \mathcal{J}_N$ is of \textit{full index} at $t$ if any field in $\mathcal{J}_N$ that vanishes at time $t$ is an element of $\mathcal{V}$. We say $\mathcal{V}$ is of \textit{full index} on an interval $I$ if it is of full index at $t$ for all $t\in I$.
		\end{definition}
		Spaces of Jacobi fields of full index are useful for finding subspaces on which the Riccati operator has negative trace.
		For simplicity of notation, given a time $t\in[0,b]$ and a subspace $W\subseteq \gamma'(t)^\perp$, we let $\trace S_t|_W$ denote the trace of the Riccati operator restricted to $W$ composed with the projection onto $W$.
		In the proof of Theorem \ref{result:connprinc}, we will use the following:
		
		\begin{lemma}[Lemma 1.5 in \cite{softconnprinc}]\label{lem:comparison}
			Let $\gamma:[0,b]\to M$ be a unit speed geodesic in a complete Riemannian manifold $M$ with $\Ric_k\geq k$. Let $\mathcal{J}_N$ be the space Jacobi fields along $\gamma(t)$ defined in Equation \eqref{eq:JN}, let $S$ be the associated Riccati operator, and let $W_0\perp \gamma'(0)$ be a $k$-dimensional subspace such that $\trace S_0|_{W_0}\leq k\cdot \cot(s_0)$ for some $s_0\in(0,\pi)$. Let $\mathcal{V}$ denote the subspace of $\mathcal{J}_N$ formed by those Jacobi fields that are orthogonal to $W_0$ at $t=0$. Let $\mathcal{V}(t)^\perp$ denote the subspace of $\gamma'(t)^\perp$ that is orthogonal to $\mathcal{V}(t)$. If $\mathcal{V}$ is of full index on $[0,b]$, then for all $t\in[0,b]$,
			\[
				\trace S_t|_{\mathcal{V}(t)^\perp}\leq k\cdot \cot(t+s_0).
			\]
		\end{lemma}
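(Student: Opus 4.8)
The plan is to reduce the inequality to a scalar Riccati comparison for $\phi(t)\defeq\trace S_t|_{\mathcal{V}(t)^\perp}$. I would show that wherever $\phi$ is defined it satisfies $\phi'\le-\tfrac1k\phi^2-k$ with $\phi(0)\le k\cot(s_0)$, and then compare with $\psi(t)\defeq k\cot(t+s_0)$, which solves $\psi'=-\tfrac1k\psi^2-k$ exactly and has $\psi(0)=k\cot(s_0)$. Before doing so I would record the bookkeeping. Since $\mathcal{J}_N$ is generated by geodesic variations leaving $N$ orthogonally, it is a Lagrangian family, so every Riccati operator $S_t$ is self-adjoint. The hypothesis that $\mathcal{V}$ is of full index on $[0,b]$ forces $\dim\mathcal{V}(t)=\dim\mathcal{J}_N-k$ for all $t\in[0,b]$: at a time where some field of $\mathcal{J}_N$ vanishes, full index puts that field in $\mathcal{V}$, and the second summand in \eqref{eq:V(t)} exactly makes up for the drop of $\{J(t):J\in\mathcal{V}\}$. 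Thus $\mathcal{V}(t)^\perp\subseteq\gamma'(t)^\perp$ is $k$-dimensional and varies smoothly, $\mathcal{V}(0)^\perp=W_0$, and the hypothesis on $W_0$ becomes $\phi(0)\le k\cot(s_0)$. On the open dense set of times at which $S_t$ is defined, I would fix a smooth orthonormal frame $E_1(t),\dots,E_k(t)$ of $\mathcal{V}(t)^\perp$ and complete it to an orthonormal frame of $\gamma'(t)^\perp$ by fields $F_1(t),\dots,F_m(t)$ spanning $\mathcal{V}(t)$.

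The key point, and the heart of the argument, is an identity describing how $\mathcal{V}(t)^\perp$ rotates out of itself. For $J\in\mathcal{V}$ one has $J'(t)=S_t(J(t))$; differentiating the relation $\langle E_i(t),J(t)\rangle\equiv 0$ and using self-adjointness of $S_t$ yields $\langle\nabla_{\gamma'}E_i,J(t)\rangle=-\langle S_tE_i,J(t)\rangle$ for every $J\in\mathcal{V}$, that is, the $\mathcal{V}(t)$-component of $\nabla_{\gamma'}E_i$ equals minus the $\mathcal{V}(t)$-component of $S_tE_i$. With this in hand I would differentiate $\phi=\sum_i\langle S_tE_i,E_i\rangle$, substitute the Riccati equation $S_t'=-S_t^2-R_{\gamma'}$, and use self-adjointness once more. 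The terms coming from the rotation of the frame within $\mathcal{V}(t)^\perp$ cancel by skew-symmetry, and the cross terms --- after completing the square and invoking the identity above --- collapse to the manifestly nonpositive expression $-3\sum_{i,l}\langle S_tE_i,F_l\rangle^2$. What remains is
\[
\phi'=-\sum_{i,j}\langle S_tE_i,E_j\rangle^2-3\sum_{i,l}\langle S_tE_i,F_l\rangle^2-\sum_i\langle R_{\gamma'}E_i,E_i\rangle .
\]
Discarding the middle term, using Cauchy--Schwarz in the form $\sum_{i,j}\langle S_tE_i,E_j\rangle^2\ge\tfrac1k\big(\sum_i\langle S_tE_i,E_i\rangle\big)^2=\tfrac1k\phi^2$, and using $\Ric_k\ge k$ to get $\sum_i\langle R_{\gamma'}E_i,E_i\rangle=\sum_i\sec(\gamma',E_i)\ge k$ (the set $\{\gamma',E_1,\dots,E_k\}$ being orthonormal), I arrive at $\phi'\le-\tfrac1k\phi^2-k$.

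The comparison step is then standard: $\psi(t)=k\cot(t+s_0)$ solves the associated Riccati ODE with $\psi(0)=k\cot(s_0)\ge\phi(0)$, so the usual Riccati comparison gives $\phi\le\psi$ on $[0,b]$. The full-index hypothesis is what makes this globally valid: a blow-up of $S_t$ occurs only where a field of $\mathcal{J}_N$ vanishes, which by full index is a field of $\mathcal{V}$, so the singularity is confined to the $\mathcal{V}(t)$-part and $\trace S_t|_{\mathcal{V}(t)^\perp}$ stays finite on all of $[0,b]$; in particular $b+s_0$ is forced to the left of the first pole of $\cot(\cdot+s_0)$, so the comparison function is continuous there. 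I expect the main obstacle to be precisely the middle part: establishing the rotation identity and arranging the frame computation so that every error term is sign-definite, together with the routine but delicate verification that $\trace S_t|_{\mathcal{V}(t)^\perp}$ is the restriction of a function smooth on all of $[0,b]$ --- this is the single place where ``full index'' is used in an essential way.
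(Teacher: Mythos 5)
The paper does not prove this lemma; it is cited verbatim as Lemma~1.5 of Guijarro--Wilhelm \cite{softconnprinc}, so there is no ``paper's own proof'' to compare against. That said, your sketch is a sound reconstruction of the argument behind the cited result, and the core computation is correct: differentiating $\phi(t)=\trace S_t|_{\mathcal{V}(t)^\perp}$ along $\gamma$, substituting $S_t'=-S_t^2-R_{\gamma'}$, using self-adjointness to kill the $\mathcal{V}(t)^\perp$-internal rotation terms, and using the identity $\langle\nabla_{\gamma'}E_i,J(t)\rangle=-\langle S_tE_i,J(t)\rangle$ (for $J\in\mathcal{V}$, valid on the dense set where $J'(t)=S_tJ(t)$) does produce
\[
\phi'=-\sum_{i,j}\langle S_tE_i,E_j\rangle^2-3\sum_{i,l}\langle S_tE_i,F_l\rangle^2-\sum_i\langle R_{\gamma'}E_i,E_i\rangle,
\]
after which discarding the middle term, applying Cauchy--Schwarz, and invoking $\Ric_k\ge k$ on the orthonormal set $\{\gamma',E_1,\dots,E_k\}$ gives the Riccati differential inequality $\phi'\le-\tfrac1k\phi^2-k$. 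The dimensional bookkeeping for $\mathcal{V}(t)$ is also right: the second summand in \eqref{eq:V(t)} always compensates for the drop in $\{J(t):J\in\mathcal{V}\}$ (Jacobi fields and their derivatives never vanish simultaneously), so $\dim\mathcal{V}(t)^\perp=k$ for all $t$, and full index is precisely what keeps the blow-up of $S_t$ confined to the $\mathcal{V}(t)$-directions so that $\phi$ extends continuously to $[0,b]$ and the scalar comparison with $k\cot(t+s_0)$ can be run globally. One small caveat: the claim $\mathcal{V}(0)^\perp=W_0$ implicitly uses $W_0\subseteq T_{\gamma(0)}N$, which is how the lemma is applied in the paper but is not literally stated in the hypothesis; the argument survives regardless, since the relevant hypothesis $\trace S_0|_{W_0}\le k\cot(s_0)$ is what feeds the initial condition.
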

		We now define a few subspaces of $\mathcal{J}_N$ to be used later which have full index on different subsets of $[0,b]$:
		\begin{align}
			\mathcal{K} &\defeq \mathrm{span} \{J\in\mathcal{J}_N : J(t) = 0 \text{ for some } t\in [0,b] \}\label{eq:K}\\
			\mathcal{K}_+ &\defeq \mathrm{span} \{J\in\mathcal{J}_N : J(t) = 0 \text{ for some } t\in (0,b] \}\label{eq:K+}\\
			\mathcal{K}_b &\defeq \{J\in\mathcal{J}_N: J(b)=0\}\label{eq:Kb}
		\end{align}
		Notice $\mathcal{K}_+$ are the Jacobi fields that create focal points for $N$ on $(0,b]$. 
		Because a given Jacobi field in $\mathcal{K}_+$ can vanish multiple times in $(0,b]$, the number of focal points for $N$ on $(0,b]$ is bounded below by the dimension of $\mathcal{K}_+$. 
		Furthermore, notice the Riccati operator $S_t$ is well-defined on $\mathcal{K}(t)^\perp$ for every $t\in[0,b]$. 
		In particular, if there is a Jacobi field $\mathcal{J}_N$ that vanishes at $t\in[0,b]$, then for any vector $v\in\mathcal{K}(t)^\perp$, there could be many possible choices of Jacobi field $J_v\in\mathcal{J}_N$  such that $J_v(t)=v$. However, $J_v'(t)$ does not depend on the choice of $J_v$. 
		Finally, notice it follows from Equation \eqref{eq:V(t)} that $\mathcal{K}_b(b)=\{J'(b):J\in\mathcal{J}_N \text{ and }J(b)=0\}$.

	\subsection{Morse index theorem for endmanifolds}
		
		As mentioned earlier in this section, given a unit speed geodesic $\gamma:[0,b]\to M$ that starts and ends perpendicular to $N$, we will use variations of $\gamma$ by geodesics that start and end perpendicular to $N$ to obtain a lower bound on the index of $\gamma$.
		So define 
		\begin{align*}
			\mathcal{J}_{N,N}&\defeq\{\text{Jacobi fields along }\gamma\text{ corresponding to variations}\nonumber \\
			&\hspace{21pt}\text{by geodesics that start and end perpendicular to }N\}.
		\end{align*}
		
		%	\[
		%	J\in \mathcal{L}_N \quad \Longleftrightarrow \quad J(0) \in T_{\gamma(0)}N, \quad \text{and} \quad (J'(0))^\top = S_{\gamma'(0)} J(0),
		%	\]
		%	where $S_{\gamma'(0)}:T_{\gamma(0)}N \to T_{\gamma(0)}N$ denotes the shape operator of $N$ determined by $\gamma'(0)$, and $(J'(0))^\top$ denotes the projection of $J'(0)$ to $T_{\gamma(0)}N$. Now let
		%	\begin{equation*}
		%		\mathcal{J}_{N,N}\defeq\{J\in \mathcal{L}_N:J(b)\in T_{\gamma(b)}N\}.
		%	\end{equation*}
		Recall $S_{\gamma'(b)}:T_{\gamma(b)}N \to T_{\gamma(b)}N$ denotes the shape operator of $N$ determined by $\gamma'(b)$.
		%	It follows from Chapter 10, Section 4 of \cite{docarmo} that
		%	\[
		%		\mathcal{J}_{N,N} = \{\text{Jacobi fields }J\text{ along }\gamma:J(0) \in T_{\gamma(0)}N, \; (J'(0))^\top = S_{\gamma'(0)} J(0)\}
		%	\]
		Now, define $A:\mathcal{J}_{N,N}\times\mathcal{J}_{N,N}\to\mathbb{R}$ to be the symmetric bilinear form given by 
		\begin{equation}\label{eq:A}
			A(J_1,J_2)\defeq\left\langle J_1'(b)-S_{\gamma'(b)}J_1(b),J_2(b)\right\rangle.
		\end{equation}
		In particular, $A$ is the difference between the Riccati operator and the shape operator. 
		Hingston and Kalish proved the Morse index theorem for two endmanifolds in the case when each submanifold lies at a focal point of the other \cite{HingstonKalish}.
		Formulated for our current setting, their result can be written as follows:
		\begin{lemma}[\cite{HingstonKalish}]\label{lem:HK}
			Given a geodesic $\gamma:[0,b]\to M$ that starts and ends perpendicular to $N$, 
			\[
			\index\gamma=\index A+\text{number of focal points in }(0,b]-\dim(\mathcal{K}_b(b)\cap TN^\perp),
			\]
			where the number of focal points is counted with multiplicities. 
			In particular, because a given Jacobi field in $\mathcal{K}_+$ can vanish multiple times in $(0,b]$, 
			\[
				\index\gamma\geq \index A+\dim \mathcal{K}_+ -\dim(\mathcal{K}_b(b)\cap TN^\perp).
			\]
		\end{lemma}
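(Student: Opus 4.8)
The plan is to deduce the equality directly from the Morse index theorem of Hingston and Kalish \cite{HingstonKalish} for a geodesic joining two submanifolds, applied with both submanifolds equal to $N$, and then to read off the displayed inequality as an immediate consequence.

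First I would identify $\index\gamma$, the index of $\gamma$ as a critical point of $E$, with the index of the Hessian of $E$ at $\gamma$. On the space of piecewise-smooth vector fields $V\perp\gamma'$ along $\gamma$ with $V(0),V(b)\in TN$, the second variation of $E$ is the index form
\[
	I(V,W)=\int_0^b\left(\langle V',W'\rangle-\langle R(V,\gamma')\gamma',W\rangle\right)dt+\langle S_{\gamma'(0)}V(0),W(0)\rangle-\langle S_{\gamma'(b)}V(b),W(b)\rangle,
\]
and, because $M$ is compact and $N$ is compact and embedded, $\index\gamma=\index I$ is finite by the usual reduction to broken geodesics. Hingston and Kalish's theorem then computes $\index I$ with no assumption that $\gamma(0)$ or $\gamma(b)$ avoids the focal points of $N$: it equals the number of $N$-focal points of $\gamma$ in $(0,b]$, counted with multiplicity, plus the index of the reduced symmetric form obtained by restricting $I$ to the Jacobi-field space $\mathcal{J}_{N,N}$, minus a correction for the focal data that sits exactly at $t=b$. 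The remaining task is to match these three pieces with the notation of this section. Integrating $I$ by parts along a field in $\mathcal{J}_{N,N}$ and using the Jacobi equation together with the $t=0$ boundary condition built into $\mathcal{J}_N$ collapses the restriction $I|_{\mathcal{J}_{N,N}}$ to the boundary expression $A$ of Equation \eqref{eq:A}; and the correction term is exactly $\dim(\mathcal{K}_b(b)\cap TN^\perp)$, because a Jacobi field in $\mathcal{J}_N$ vanishing at $t=b$ is injectively recorded by its derivative there and contributes to this correction precisely when that derivative is normal to $N$. When $\gamma(b)$ is not an $N$-focal point, both the correction term and the $t=b$ contribution to the focal count vanish, and the formula reduces to the classical Morse index theorem for a geodesic ending on a submanifold. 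This bookkeeping---pinning down the correction term and the shape-operator sign conventions so that Hingston and Kalish's formulation lines up with ours---is the only point requiring genuine care; everything else is standard Morse theory.

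Granting the equality, the displayed inequality is immediate: writing the number of $N$-focal points in $(0,b]$ counted with multiplicity as $\sum_{t_0\in(0,b]}\dim\{J\in\mathcal{J}_N:J(t_0)=0\}$, this sum is at least the dimension of the span of all of these subspaces, which is $\dim\mathcal{K}_+$ by the definition \eqref{eq:K+}, so $\index\gamma\geq\index A+\dim\mathcal{K}_+-\dim(\mathcal{K}_b(b)\cap TN^\perp)$.
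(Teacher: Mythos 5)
Your proposal is correct and follows the paper's approach: this lemma is stated in the paper as a direct citation of Hingston and Kalish's Morse index theorem with no proof supplied beyond the one-line observation that the inequality follows from the equality. Your identification of the three terms (the restriction of the index form to $\mathcal{J}_{N,N}$ collapsing by parts to the boundary form $A$, the focal count, and the correction $\dim(\mathcal{K}_b(b)\cap TN^\perp)$ accounting for focal data exactly at $t=b$) is consistent with how the paper uses these quantities, and your derivation of the inequality---that the number of focal points counted with multiplicity equals $\sum_{t_0\in(0,b]}\dim\{J\in\mathcal{J}_N:J(t_0)=0\}$, which dominates the dimension of the span $\mathcal{K}_+$---is actually a cleaner phrasing than the paper's one-sentence remark about Jacobi fields vanishing multiple times, which only explains why the inequality may fail to be an equality.
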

		
		To obtain a lower bound for the index of $\gamma$, we will bound the index of $A$ by combining Lemma \ref{lem:comparison} with the following algebraic result:
		
		\begin{lemma}[Proposition 4.1 in \cite{softconnprinc}]\label{lem:negdef}
			Let $A:U\to U$ be a self-adjoint endomorphism on an $l$-dimensional inner product space. Suppose there exist $k\in\{1,2,\dots,l-1\}$ and $\lambda\in\mathbb{R}$ such that, for all $k$-dimensional subspaces $W\subset U$, $\trace A|_W\leq k\cdot \lambda$. Then there is an $(l-k+1)$-dimensional subspace $V\subseteq U$ such that for all unit $v\in V$, 
			\[
				\langle Av,v\rangle\leq \lambda.
			\]
		\end{lemma}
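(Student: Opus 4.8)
The plan is to reduce the statement to a one-line eigenvalue inequality. Since $A$ is self-adjoint, I would first choose an orthonormal basis $v_1,\dots,v_l$ of $U$ consisting of eigenvectors of $A$, with $Av_i=\mu_i v_i$ and the eigenvalues ordered so that $\mu_1\geq\mu_2\geq\cdots\geq\mu_l$.

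Next I would apply the hypothesis to the single $k$-dimensional subspace $W\defeq\mathrm{span}\{v_1,\dots,v_k\}$. Since this $W$ is $A$-invariant, $\trace A|_W=\mu_1+\cdots+\mu_k$, so the assumption gives $\mu_1+\cdots+\mu_k\leq k\lambda$. As $\mu_k$ is the smallest among $\mu_1,\dots,\mu_k$, this yields $k\mu_k\leq\mu_1+\cdots+\mu_k\leq k\lambda$, and hence $\mu_k\leq\lambda$.

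Finally, I would take $V\defeq\mathrm{span}\{v_k,v_{k+1},\dots,v_l\}$, which is $(l-k+1)$-dimensional. For any unit $v=\sum_{i=k}^l c_i v_i\in V$, so that $\sum_{i=k}^l c_i^2=1$, one has $\langle Av,v\rangle=\sum_{i=k}^l c_i^2\mu_i\leq\mu_k\sum_{i=k}^l c_i^2=\mu_k\leq\lambda$, using $\mu_i\leq\mu_k$ for $i\geq k$. This is exactly the desired conclusion.

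There is essentially no serious obstacle; the only point requiring a little care is bookkeeping with multiplicities of eigenvalues, but since the argument works with a fixed orthonormal eigenbasis and its induced ordering rather than with ``the eigenspace of $\mu_k$,'' repeated eigenvalues cause no trouble. It is also worth noting that the proof uses the hypothesis only for the single subspace $W=\mathrm{span}\{v_1,\dots,v_k\}$; by the Ky Fan maximum principle this $W$ realizes $\max_W\trace A|_W$, so the full strength of the assumption is equivalent to the scalar inequality $\mu_1+\cdots+\mu_k\leq k\lambda$.
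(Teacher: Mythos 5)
Your proof is correct and is the standard argument: pass to an orthonormal eigenbasis with eigenvalues $\mu_1\geq\cdots\geq\mu_l$, apply the hypothesis to the top-$k$ eigenspace $W=\mathrm{span}\{v_1,\dots,v_k\}$ to get $\mu_k\leq\lambda$, and take $V=\mathrm{span}\{v_k,\dots,v_l\}$. Note that the paper itself does not reprove this lemma but simply cites it as Proposition 4.1 of Guijarro--Wilhelm; your spectral argument is exactly the proof given there, and your closing remark that the hypothesis is (by Ky Fan) equivalent to the single scalar inequality $\mu_1+\cdots+\mu_k\leq k\lambda$ is a correct and clarifying observation.
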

		
	\subsection{Connectedness principle for fixed point sets and $\mathbf{Ric_k>0}$}
	
		We are now ready to begin the proof of Theorem \ref{result:connprinc}, which we restate here for convenience:
		
		\begin{theorem}\label{thm:connprinc}
			Let $M^n$ be a compact, $n$-dimensional manifold with $\Ric_k>0$ for some $k\in\{2,\dots,n-1\}$. 
			Suppose $N^{n-d}$ is a compact embedded submanifold of codimension $d$ in $M^n$. If there is a Lie group $G$ that acts by isometries on $M^n$ and fixes $N^{n-d}$ point-wise, then the inclusion 
			\[
				N^{n-d}\hookrightarrow M^n\text{ is }(n-2d+2-k+\delta(G))\text{-connected},
			\] 
			where $\delta(G)$ is the dimension of the principal $G$-orbits in $M^n$.
		\end{theorem}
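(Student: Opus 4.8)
The plan is to run a Morse-theoretic argument on the energy functional $E$ of the path space $\Omega_N$, as in \cite{connprinc} and \cite{softconnprinc}, and to extract the improvement $\delta(G)$ from the fact that a geodesic which is critical for $E$ is automatically orthogonal to every $G$-orbit it meets. First, since $M$ is compact and $\Ric_k>0$, after rescaling the metric we may assume $\Ric_k\geq k$, so that Lemma~\ref{lem:comparison} applies; we may also assume $G$ is a closed connected subgroup of $\Iso(M^n)$, since replacing $G$ by the closure of its identity component changes neither the action fields nor $\delta(G)$. Note that $N$, being a fixed point set of a group of isometries, is totally geodesic, so its shape operators $S_{\gamma'(0)}$ and $S_{\gamma'(b)}$ vanish for any geodesic $\gamma$ meeting $N$ orthogonally. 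By the standard Morse theory of $E$ on $\Omega_N$ — whose critical points are precisely the geodesics $\gamma\colon[0,b]\to M$ meeting $N$ orthogonally at both endpoints, with the constant curves forming the minimum set $N\subseteq\Omega_N$ — it suffices to prove that every \emph{nonconstant} such geodesic $\gamma$ has $\index\gamma\geq n-2d+2-k+\delta(G)$; the stated connectivity of $N\hookrightarrow M^n$ then follows exactly as in \cite{connprinc}.

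So fix a nonconstant unit speed geodesic $\gamma\colon[0,b]\to M^n$ meeting $N$ orthogonally at $t=0$ and $t=b$. For $X$ in the Lie algebra $\mathfrak{g}$ of $G$, the action field $X^*$ restricts along $\gamma$ to a Jacobi field $J_X\in\mathcal{J}_{N,N}$; since $G$ fixes $N$ point-wise, $J_X$ vanishes at $\gamma(0)$ and $\gamma(b)$, is perpendicular to $\gamma'$, and satisfies $J_X'(0),J_X'(b)\in TN^\perp$. Consequently $\gamma'(t)$ is orthogonal to the $G$-orbit through $\gamma(t)$ for every $t$. Let $\mathcal{G}\defeq\{J_X:X\in\mathfrak{g}\}\subseteq\mathcal{J}_{N,N}$. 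Its dimension equals that of the $G$-orbit through a generic point of $\gamma$: if that point is principal this is $\delta(G)$, and if not, one reduces to the fixed point set of the generic isotropy subgroup $H$ of $\gamma$, a proper totally geodesic submanifold containing $N$ on which the residual $N_G(H)/H$-action has smaller principal orbits — a reduction discussed below — so we may assume $\dim\mathcal{G}=\delta(G)$. One checks that $\mathcal{G}$ lies in the kernel of the bilinear form $A$ of \eqref{eq:A} (because $J_X(b)=0$ while $J(b)\in T_{\gamma(b)}N\perp J_X'(b)$ for every $J\in\mathcal{J}_{N,N}$) and that $\mathcal{G}\subseteq\mathcal{K}_b\subseteq\mathcal{K}_+$, with the evaluation $J\mapsto J'(b)$ mapping $\mathcal{G}$ injectively into $\mathcal{K}_b(b)\cap TN^\perp$.

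Now apply the Hingston--Kalish index theorem (Lemma~\ref{lem:HK}): $\index\gamma\geq\index A+\dim\mathcal{K}_+-\dim(\mathcal{K}_b(b)\cap TN^\perp)$. The key point is that, because $\gamma$ is orthogonal to all $G$-orbits, the Guijarro--Wilhelm comparison (Lemma~\ref{lem:comparison}) and the algebraic Lemma~\ref{lem:negdef} can be run along the \emph{orbit-transverse} part of $\mathcal{J}_{N,N}$, a subspace of dimension $n-1-\delta(G)$: there the relevant curvature comparison persists (equivalently, by O'Neill's formula the principal part of $M/G$ satisfies $\Ric_k\geq k$), and the effective codimension of $N$ drops from $d$ to $d-\delta(G)$. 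Choosing $W_0$ to be a $k$-dimensional subspace of $T_{\gamma(0)}N$ transverse to the orbit directions (so $\trace S_0|_{W_0}=0$ and one may take $s_0=\pi/2$) and carrying the focal-point bookkeeping through $\mathcal{K}_+$ and $\mathcal{K}_b(b)\cap TN^\perp$ — noting that the orbit-tangent fields $\mathcal{G}$ contribute equally to the focal count at $t=b$ and to the correction term, hence cancel there while contributing nonnegatively overall — then yields
\[
	\index\gamma\ \geq\ \bigl(n-\delta(G)\bigr)-2\bigl(d-\delta(G)\bigr)+2-k\ =\ n-2d+2-k+\delta(G),
\]
as required.

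The step I expect to be the main obstacle is this passage to the orbit-transverse geometry: since $M/G$ is not a manifold, the reduction of the effective codimension by $\delta(G)$ must be carried out directly at the level of Jacobi fields along $\gamma$ — decomposing $\mathcal{J}_{N,N}$ compatibly with the orbit directions along the whole of $\gamma$, verifying that the relevant spaces remain of full index on $[0,b]$ after this choice of $W_0$, checking that the O'Neill-type correction to the transverse Riccati operator has the favourable sign, and reconciling the accounting of $\dim\mathcal{K}_+$ with $\dim(\mathcal{K}_b(b)\cap TN^\perp)$. Handling geodesics that meet only singular orbits, via reduction to the totally geodesic fixed point set of the generic isotropy group along $\gamma$ and comparison of the resulting bound with the target, is a further point requiring care.
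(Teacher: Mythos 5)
Your proposal correctly assembles the toolkit and the broad Morse-theoretic strategy the paper uses: bound the index of each nonconstant $N$-orthogonal geodesic $\gamma$ below by $n-2d+2-k+\delta(G)$ using the action fields $J_X$, the Hingston--Kalish index theorem (Lemma~\ref{lem:HK}), the Guijarro--Wilhelm comparison (Lemma~\ref{lem:comparison}), and the algebraic Lemma~\ref{lem:negdef}, then run broken-geodesic Morse theory on $\Omega_N$. Your observations that $J_X$ vanishes at both endpoints, that $J_X'(0),J_X'(b)\in TN^\perp$, that the space of action fields sits inside $\mathcal{K}_b\subseteq\mathcal{K}_+$, and that it lies in the kernel of $A$ are all correct.

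However, the step where you extract the $+\delta(G)$ improvement is exactly where the proposal has a genuine gap, and you flag it yourself: the ``orbit-transverse part of $\mathcal{J}_{N,N}$ of dimension $n-1-\delta(G)$ and effective codimension $d-\delta(G)$'' justified by an O'Neill analogy on $M/G$ is not a legitimate move, since $M/G$ is not a manifold and in any case the orbit stratification degenerates at $N$ (there are no ``orbit directions'' in $T_{\gamma(0)}N$, since $G$ fixes $N$). The paper never passes to a quotient. Instead, writing $\Delta$ for the action fields and $U_0\defeq\mathcal{K}_+(0)^\perp\cap T_{\gamma(0)}N$, it uses $\Delta(0)\subseteq\mathcal{K}_+(0)\cap T_{\gamma(0)}N^\perp$ (your derivative observation) to get $\dim(\mathcal{K}_+(0)^\perp+T_{\gamma(0)}N)\leq\dim F-\dim\Delta(0)$, hence $\dim U_0\geq\dim N+\dim\Delta(0)-\dim\mathcal{K}_+$, and then splits into cases on the size of $\dim\mathcal{K}_+$: when it is large, Hingston--Kalish alone gives the bound; when it is small, $\dim U_0\geq k$, one runs the comparison with $\mathcal{V}$ the Jacobi fields orthogonal to $U_0$ at $t=0$ (which is of full index because $U_0\subseteq\mathcal{K}_+(0)^\perp$), bounds $\dim U_b$ from below by a parallel count at $t=b$, and applies Lemma~\ref{lem:negdef} to make $A$ negative definite on a subspace of the right size. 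Your ``contributes equally to the focal count and the correction term'' observation is one ingredient, but it does not, by itself, produce the $+\delta(G)$; the case split and the explicit $U_0,U_b$ dimension counts are what close the argument. For the singular case the paper also fixes $H$ as the intersection of all isotropy groups along $\gamma$, works inside the totally geodesic $F^{n-l}=M^H_x$, takes all orthogonal complements in $TF$, and uses $\delta(N(H))+l\geq\delta(G)$ to convert the bound for $F$ into the bound for $M$; your sketch gestures at this reduction but does not set up the comparison.
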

		
		First, we obtain the appropriate lower bound on the index of an energy-minimizing geodesic:
		
		\begin{lemma}\label{lem:indexofgeodesic}
			Let $M^n$, $N^{n-d}$, $G$, and $\delta(G)$ be defined as in Theorem \ref{thm:connprinc}. 
			Suppose $\gamma:[0,b]\to M$ is a unit speed geodesic that begins and ends perpendicular to $N^{n-d}$.
			Then
			\[
				\index \gamma \geq n - 2d + 2 - k + \delta(G).
			\] 
		\end{lemma}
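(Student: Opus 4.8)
The plan is to estimate $\index\gamma$ by combining the Hingston--Kalish Morse index theorem for two endmanifolds (Lemma \ref{lem:HK}) with the Transverse Jacobi Field Comparison (Lemma \ref{lem:comparison}) and the linear-algebra Lemma \ref{lem:negdef}. There are two sources for the bound: a curvature estimate that forces the bilinear form $A$ from \eqref{eq:A} to be negative-definite on a subspace of large dimension, and a $\delta(G)$-dimensional space of Jacobi fields along $\gamma$ produced by the $G$-action.

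First, rescaling the metric changes neither $\gamma$ as an unparametrized geodesic nor its index, so I may assume $\Ric_k\geq k$ and thereby invoke Lemma \ref{lem:comparison}. Next I extract Jacobi fields from the symmetry: for $X$ in the Lie algebra $\mathfrak g$ of $G$, the action field $X^\ast$ is a Killing field vanishing identically on $N$, since $G$ fixes $N$ point-wise; hence its restriction $X^\ast|_\gamma$ is a Jacobi field lying in $\mathcal J_N$ (the condition at $t=0$ is automatic because $X^\ast|_{\gamma(0)}=0$) that vanishes again at $t=b$. Thus $\mathcal G\defeq\{X^\ast|_\gamma:X\in\mathfrak g\}$ is a subspace of $\mathcal K_b$; choosing $\gamma$ to meet a principal $G$-orbit gives $\dim\mathcal G=\delta(G)$, so $\dim\mathcal K_b\geq\delta(G)$. (If $\gamma$ meets no principal orbit it lies in the totally geodesic fixed-point set of a strictly larger isotropy subgroup, and a short induction on dimension reduces to this case.) Moreover, since $X^\ast$ is Killing and vanishes on $N$, the skew-symmetric endomorphism $\nabla X^\ast$ annihilates $T_{\gamma(b)}N$, so $(X^\ast|_\gamma)'(b)=\nabla_{\gamma'(b)}X^\ast\in T_{\gamma(b)}N^\perp$; this locates $\mathcal G$ inside $\mathcal K_b(b)\cap TN^\perp$, which will be needed to match the two correction terms in Lemma \ref{lem:HK}.

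The heart of the argument is the lower bound on $\index A$. Choose a $k$-dimensional subspace $W_0\subseteq T_{\gamma(0)}N$ adapted to the shape operator $S_{\gamma'(0)}$ so that $\trace S_0|_{W_0}\leq k\cdot\cot(s_0)$ for a suitable $s_0\in(0,\pi)$, and let $\mathcal V\subseteq\mathcal J_N$ be the subspace of fields orthogonal to $W_0$ at $t=0$; note that $\mathcal G\subseteq\mathcal V$ automatically. On the subinterval on which $\mathcal V$ is of full index, Lemma \ref{lem:comparison} yields $\trace S_t|_{\mathcal V(t)^\perp}\leq k\cdot\cot(t+s_0)$, which becomes strictly negative once $t+s_0$ exceeds $\pi/2$. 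Transporting this bound to the form $A$ at $t=b$ via \eqref{eq:A} and feeding it into Lemma \ref{lem:negdef} produces a negative-definite subspace for $A$ of dimension $\dim\mathcal J_{N,N}-k+1$, less a defect that accounts for the directions on which $\mathcal V$ fails to be of full index, that is, for the focal points of $N$ along $\gamma$. Inserting the resulting estimate for $\index A$, together with $\dim\mathcal K_b\geq\delta(G)$ and $\dim(\mathcal K_b(b)\cap TN^\perp)\leq d$, into Lemma \ref{lem:HK} gives $\index\gamma\geq n-2d+2-k+\delta(G)$; and if $\gamma$ is so long that $\mathcal V$ cannot have full index on all of $[0,b]$, then the focal-point count alone already exceeds the desired bound.

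I expect the main obstacle to be the bookkeeping in this last step. Lemma \ref{lem:comparison} applies only where $\mathcal V$ is of full index, so one must keep track of the dimensions of $\mathcal K$, $\mathcal K_+$, and $\mathcal K_b$ and organize the subspace carrying the trace estimate for $A$ so that the defect subtracted from $\dim\mathcal J_{N,N}-k+1$ never exceeds $d-2$ and, most importantly, so that the $\delta(G)$ coming from the group fields actually survives in the Hingston--Kalish formula rather than cancelling against the term $\dim(\mathcal K_b(b)\cap TN^\perp)$. Dispatching the non-principal orbit types and the long-geodesic case cleanly is the remaining technical point.
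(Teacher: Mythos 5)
Your proposal identifies the right ingredients — the Hingston–Kalish index theorem, the transverse Jacobi field comparison, and the negative-definiteness lemma — and the reduction to $\mathrm{Ric}_k\geq k$ by rescaling and the detour through $H=\bigcap_t G_{\gamma(t)}$ and $N(H)$ match the paper's outline. But the step you flag as ``the main obstacle'' is precisely where the proof actually lives, and your sketch leaves it unresolved in a way that does not obviously close.

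The crux is your choice of $W_0$. You pick ``a $k$-dimensional subspace $W_0\subseteq T_{\gamma(0)}N$ adapted to the shape operator,'' but with no control on $W_0$ the space $\mathcal V$ of fields orthogonal to $W_0$ at $t=0$ need not be of full index on $[0,b]$: $\mathcal K_+$ need not be contained in $\mathcal V$, and then Lemma \ref{lem:comparison} simply does not apply up to $t=b$, so there is no trace estimate for $A$ to transport. Your fallback — ``then the focal-point count alone already exceeds the desired bound'' — is the correct instinct but is not quantified, and the two regimes you gesture at need a sharp dichotomy. The paper organizes the proof around $\dim\mathcal K_+$: if $\dim\mathcal K_+\geq n-d-k+\delta(N(H))+1$, Lemma \ref{lem:HK} together with $\mathcal K_b(b)\subseteq\gamma'(b)^\perp$ gives the bound outright; otherwise one sets $U_0\defeq\mathcal K_+(0)^\perp\cap T_{\gamma(0)}N$ and proves $\dim U_0\geq k$ by exploiting both the smallness of $\mathcal K_+$ and the inclusion $\Delta(0)\subseteq\mathcal K_+(0)\cap T_{\gamma(0)}N^\perp$ for the action fields $\Delta$. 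Taking $W_0\subseteq U_0$ then forces $\mathcal K\subseteq\mathcal V$ (so $\mathcal V$ has full index on all of $[0,b]$), and since $N$ is a fixed-point component it is totally geodesic, which gives the clean input $s_0=\pi/2$ rather than the unspecified ``suitable $s_0$'' you invoke. You also do not notice the totally geodesic fact, which matters: without it, $\cot(b+s_0)$ need not be negative.

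Finally, you worry — rightly — about $\delta(G)$ ``surviving'' against the term $-\dim(\mathcal K_b(b)\cap TN^\perp)$ in Lemma \ref{lem:HK}, but you don't resolve it. In the paper the resolution is a careful dimension count for $U_b\defeq\mathcal V(b)^\perp\cap T_{\gamma(b)}N$: one shows $\dim U_b\geq n-2d+1+\delta(N(H))+l-\dim\mathcal K_+ +\dim(\mathcal K_b(b)\cap TN^\perp)$, and the $+\dim(\mathcal K_b(b)\cap TN^\perp)$ here cancels the troublesome $-\dim(\mathcal K_b(b)\cap TN^\perp)$ in Lemma \ref{lem:HK}, while $-\dim\mathcal K_+$ is absorbed by the focal-point count. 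None of this bookkeeping is in your sketch; as written, the proposal describes the scaffolding of the argument but omits the dichotomy and the $U_0$/$U_b$ dimension counts that actually make the numbers come out.
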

		
		\begin{proof}
			If $\gamma$ passes through a principal orbit at some point, then the action fields for the $G$-action will contribute a value of $\delta (G)$ to the index count for $\gamma$ in the argument below.
			However, we must account for the possibility that $\gamma$ does not pass through any principal orbits. 
			Define the closed subgroup $H<G$ to be the intersection of the isotropy groups $G_{\gamma(t)}$ for all $t\in[0,b]$.
			Then $\gamma$ lies in a component $F^{n-l}$ of the fixed point set of the $H$-action on $M^n$.
%			It follows from Kleiner's Lemma that there exists a subgroup $H < G$ such that $\gamma$ lies in a component $F^{n-l}$ of the fixed point set of the $H$-action on $M^n$.
%			Namely, $H$ is the intersection of the isotropy groups $G_{\gamma(t)}$ for all $t\in[0,b]$.
			In the case that $H$ is trivial, $F^{n-l}$ is the entire manifold $M^n$, and $\gamma$ passes through principal orbits on an open dense subset of $(0,b)$. 
			In the general case, because $F^{n-l}$ is fixed point-wise by $H$, $F^{n-l}$ is invariant under the action of the normalizer $N(H)$ of $H$ in $G$. 
			Let $\delta(N(H))$ denote the dimension of the principal $N(H)$-orbits in $F^{n-l}$.
%			
%			If $N^{n-d}$ is fixed point-wise by an isometric $G$-action, then it follows from the first variation of energy that there exists a subgroup $H<G$ such that $\gamma$ lies in a component $F^{n-l}$ of the fixed point set of the $H$-action on $M^n$. 
%			In the case $H$ is trivial, $F^{n-l}$ is the entire manifold $M^n$, and $\gamma$ passes through principal orbits on an open dense subset of $(0,b)$. 
%			Because $F^{n-l}$ is fixed point-wise by $H$, $F^{n-l}$ is invariant under the action of $N(H)< G$, the normalizer of $H$ in $G$. 
%			Let $\delta(N(H))$ denote the dimension of the principal $N(H)$-orbits in $F^{n-l}$.
%			Throughout this proof, all orthogonal complements, like $\mathcal{K}(t)^\perp$ and $T_pN^\perp$, will be taken in the tangent bundle of $F$.
%			
			Notice that $\delta(N(H)) + l \geq \delta(G)$.
			Thus to show that the index of $\gamma$ is bounded from below by $n-2d+2-k+\delta(G)$, it suffices to show that the index is at least  $n-2d+2-k+\delta(N(H))+l$.
			Throughout this proof, all orthogonal complements (e.g. $\mathcal{K}(t)^\perp$, $T_pN^\perp$, etc.) will be taken in the tangent bundle of $F^{n-l}$.
			
			Recall the definition of $\mathcal{K}_+$ from Equation \eqref{eq:K+}.
			If $\dim(\mathcal{K}_+)\geq n-d-k+\delta(N(H))+1$, then by Lemma \ref{lem:HK} and because $\mathcal{K}_b(b)\subseteq \gamma(b)^\perp$, we have
			\begin{align*}
				\index\gamma&=\index A+\text{number of focal points in }(0,b]-\dim(\mathcal{K}_b(b)\cap T_{\gamma(b)}N^\perp)\nonumber \\ 
				&\geq\dim\mathcal{K}_+-\dim(\gamma'(b)^\perp\cap T_{\gamma(b)}N^\perp)\nonumber\\ 
				&\geq (n-d-k+\delta(N(H))+1)-(d-l-1)\nonumber\\ 
				&=n-2d+2-k+\delta(N(H))+l.\label{eq:dimK+large}
			\end{align*}
			So if $\dim(\mathcal{K}_+)\geq n-d-k+\delta(N(H))+1$, then the result follows.
			
			Suppose now that $\dim(\mathcal{K}_+) \leq n - d - k + \delta(N(H))$.
%			Because $\mathcal{K}_+\subseteq \mathcal{K}$, it follows that $\dim(\mathcal{K}) \geq n - d - k + \delta(N(H)) + 1$.
%			Recall that the Riccati operator $S_t$ is well-defined on $\mathcal{K}(t)^\perp$ for all $t\in[0,b]$.
			Because $\mathcal{J}_N$ is the collection of Jacobi fields along $\gamma$ which correspond to variations by geodesics that leave $N$ orthogonally at $t=0$, the Riccati operator on $N$, $S_0|_{T_{\gamma(0)}N}$, is precisely the shape operator on $N$, $S_{\gamma'(0)}$. Thus, because $N$ is totally geodesic, we have $S_0|_{T_{\gamma(0)}N}\equiv 0$. So if we define 
			\[
				U_0 \defeq \mathcal{K}_+(0)^\perp \cap T_{\gamma(0)}N,
			\]
			we have $S_0|_{U_0}\equiv 0$. Now let
			\[
				\Delta \defeq \{J\in\mathcal{J}_{N}:J\text{ is an action field for the }N(H)\text{-action on }F\}.
			\]
			Notice that if $J\in\Delta$, then $J(0)=0$, $J(b)=0$, $J'(0)\in T_{\gamma(0)}N^\perp$ and $J'(b)\in T_{\gamma(b)}N^\perp$.  
			In particular, $\Delta(0)\subseteq \mathcal{K}_+(0)\cap T_{\gamma(0)}N^\perp$.
			It follows that $\dim(\mathcal{K}_+(0)^\perp + T_{\gamma(0)}N)\leq \dim F-\dim \Delta(0)$. 
			Hence,
			\begin{align}
				\dim U_0&=\dim\mathcal{K}_+(0)^\perp+\dim T_{\gamma(0)}N-\dim(\mathcal{K}_+(0)^\perp+T_{\gamma(0)}N)\nonumber\\
				&\geq (\dim F-\dim\mathcal{K}_+(0))+\dim N-\left(\dim F-\dim\Delta(0)\right)\nonumber\\
				&=\dim N+\dim\Delta(0)-\dim\mathcal{K}_+(0)\label{eq:U0}\\
				&\geq(n-d)+\delta(N(H))-(n-d-k+\delta(N(H)))\nonumber\\
				&=k.\nonumber
			\end{align}
			Then for every $k$-dimensional subspace $W_0\subset U_0$, $S_0|_{W_0}\equiv 0 = k\cdot \cot(\pi/2)$.
			Let $\mathcal{V}\subset \mathcal{J}_N$ be the collection Jacobi fields that are orthogonal to $U_0$ at $\gamma(0)$.
			Then $\mathcal{V}$ is of full index on $[0,b]$, and $\dim(\mathcal{V}(t)^\perp)=\dim(U_0)$.
			By Lemma \ref{lem:comparison}, for every $k$-dimensional subspace $W_b$ of $\mathcal{V}(b)^\perp$, we have $\trace S_b|_{W_b}\leq k\cdot \cot(b+\pi/2)<0$. 
			Define 
			\[
				U_b\defeq \mathcal{V}(b)^\perp \cap T_{\gamma(b)}N.
			\]
			Then $\trace S_b$ is also negative on every $k$-dimensional subspace of $U_b$. Notice that because $\mathcal{V}(b)^\perp\subseteq \mathcal{K}_b(b)^\perp \subseteq \gamma'(b)^\perp$, we have 
			\[
				\mathcal{V}(b)^\perp + T_{\gamma(b)}N \subseteq (\mathcal{K}_b(b) \cap TN^\perp )^\perp \cap \gamma'(b)^\perp.
			\]
			In particular, we have 
			\begin{align*}%\label{eq:intersection}
				\dim U_b &= \dim \mathcal{V}(b)^\perp + \dim T_{\gamma(b)}N - \dim(\mathcal{V}(b)^\perp + T_{\gamma(b)}N)\\
				&\geq \dim U_0 + \dim N - \dim\left((\mathcal{K}_b(b) \cap TN^\perp )^\perp \cap \gamma'(b)^\perp\right).\\
				&\geq \dim U_0 + (n - d) - \left(n - l - 1 - \dim(\mathcal{K}_b(b) \cap TN^\perp )\right)\\
				&= l - d + 1 + \dim U_0 + \dim(\mathcal{K}_b(b) \cap TN^\perp ).
			\end{align*}
			Recall from \eqref{eq:U0} that $\dim U_0 \geq \dim N+\dim\Delta-\dim\mathcal{K}_+$. Thus,
			\begin{align*}
				\dim U_b &\geq l - d + 1 + ( \dim N + \dim\Delta - \dim\mathcal{K}_+ ) + \dim(\mathcal{K}_b(b) \cap TN^\perp )\\
				&= l - d + 1 + ( n - d + \delta(N(H)) - \dim\mathcal{K}_+ ) + \dim(\mathcal{K}_b(b) \cap TN^\perp )\\
				&= n - 2d + 1 + \delta(N(H)) + l - \dim \mathcal{K}_+ + \dim(\mathcal{K}_b(b) \cap TN^\perp ).
			\end{align*}
			Hence, by Lemma \ref{lem:negdef}, there exists a subspace $\widetilde{U}_b \subseteq U_b$ with
			\[
				\dim \widetilde{U}_b \geq n - 2d + 2 - k + \delta(N(H)) + l - \dim \mathcal{K}_+ + \dim(\mathcal{K}_b(b) \cap TN^\perp )
			\]
			such that for all unit vectors $v \in \widetilde{U}_b$,
			\[
				\langle S_b(v) , v \rangle \leq 0.
			\]
			Let $\widetilde{\mathcal{U}} \subset \mathcal{J}_{N,N}$ be the subspace such that $\widetilde{\mathcal{U}}(b) = \widetilde{U}_b$. Then the bilinear form $A$ is negative-definite on $\widetilde{\mathcal{U}}$. So by Lemma \ref{lem:HK}, 
			\begin{align*}
				\index\gamma &\geq \index A+\dim \mathcal{K}_+ -\dim(\mathcal{K}_b(b)\cap TN^\perp)\\
				&\geq \dim \widetilde{\mathcal{U}} + \dim \mathcal{K}_+ -\dim(\mathcal{K}_b(b)\cap TN^\perp)\\
				&\geq n - 2d + 2 - k + \delta(N(H)) + l. 
			\end{align*}
			As mentioned before, because $\delta(N(H)) + l \geq \delta(G)$, the result follows.
		\end{proof}
		
		Now we use Lemma \ref{lem:indexofgeodesic} to prove Theorem \ref{thm:connprinc}:
		
		\begin{proof}[Proof of Theorem \ref{thm:connprinc}]
			Let $M^n$ be a compact, $n$-dimensional manifold with $\Ric_k>0$ for some $k\in\{2,\dots,n-1\}$, let $N^{n-d}$ be a compact embedded submanifold of codimension $d$ in $M^n$, and suppose a Lie group $G$ that acts by isometries on $M^n$ fixes $N^{n-d}$ point-wise.
			We must show that the inclusion $N^{n-d}\hookrightarrow M^n$ is $(n-2d+2-k+\delta(G))$-connected.
			
			As in the introduction to this section, let $\Omega_N$ denote the space of piecewise-smooth curves in $M$, parametrized on the unit interval $[0,1]$, that start and end in $N$. 
			Also recall the energy functional
			\[
				E:\Omega_N\to [0,\infty), \qquad E(\gamma)\defeq \tfrac{1}{2}\int_0^1|\gamma'(t)|^2\;dt.
			\]
			Notice $N$ embeds in $\Omega_N$ as the set of constant paths, and $E^{-1}(0) = N$.
			We claim that the inclusion $N\hookrightarrow \Omega_N$ is $(n-2d+1-k+\delta(G))$-connected.
			
			Recall that critical points of the energy functional $E$ are geodesics that start and end perpendicular to $N$.
			By Lemma \ref{lem:indexofgeodesic}, we have that the index of such a critical point is bounded below by $n-2d+2-k+\delta(G)$.
			Say that a critical point $\gamma$ has energy $E(\gamma) = e_0$. 
			$E^{-1}([0,e_0]) \subset \Omega_N$ can be approximated by a finite-dimensional submanifold of broken geodesics in $\Omega_N$. 
			Furthermore, on this approximation of $E^{-1}([0,e_0])$, one can find a Morse function that is $C^\infty$-close to $E$, is identical to $E$ on a neighborhood of $E^{-1}(0) = N$, and any critical point is non-degenerate and has index at least $n-2d+2-k+\delta(G)$.	It then follows that, up to homotopy, $\Omega_N$ can be obtained from $N$ by attaching cells of dimension at least $n-2d+2-k+\delta(G)$. 
			For more information, see \cite[Part III Section 16]{morse}.
			Thus, the inclusion $N\hookrightarrow \Omega_N$ is $(n-2d+1-k+\delta(G))$-connected.
			
			Finally, because $\pi_i(M,N)\cong\pi_{i-1}(\Omega_N,N)$, it follows from the long exact sequence of a pair in homotopy that $N\hookrightarrow M$ is $(n-2d+2-k+\delta(G))$-connected.
		\end{proof}	
	
	\section{$\mathbf{Ric_2>0}$ with large symmetry rank in odd dimensions}\label{sec:odd}
	
	In this section, we will study closed, simply connected, odd-dimensional manifolds with $\Ric_2>0$ and large symmetry rank.
	In particular, we obtain a diffeomorphism classification for those with maximal symmetry rank (the odd-dimensional case of Theorem \ref{result:main}), and we obtain a homeomorphism classification of those with approximately $3/4$-maximal symmetry rank (Theorem \ref{result:odd}).
	
	\subsection{Diffeomorphism classification}\label{sec:odddiffeo}
		
		In this section, we will prove the odd-dimensional case in Theorem \ref{result:main}, which we restate here for convenience:
		
		\begin{theorem}\label{thm:oddmaxsymrank}
			Let $M^n$ be a closed, simply connected, odd-dimensional Riemannian manifold with $\Ric_2>0$.
			If a torus $T^r$ of rank $r = \frac{n+1}{2} $ acts effectively and by isometries on $M^n$, then $M^n$ is diffeomorphic to $S^n$.
		\end{theorem}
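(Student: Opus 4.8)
The plan is to argue by induction on the odd dimension $n$, reducing $M^n$ to a totally geodesic codimension-$2$ submanifold that is itself a sphere, and then to upgrade the resulting homotopy-sphere conclusion to an honest diffeomorphism using the rigidity of maximal torus actions on homotopy spheres. Write $n=2m-1$, so that $r=m$. The case $n=3$ is immediate, since a closed simply connected $3$-manifold is diffeomorphic to $S^3$. So assume $n\ge 5$ and that the theorem holds in all smaller odd dimensions; then $m\ge 3$, and Proposition \ref{result:torusfixedpt} (applied with $k=2$) produces a circle subgroup of $T^m$ whose fixed-point set in $M^n$ is non-empty (and a proper submanifold, since the $T^m$-action is effective).

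Among all circle subgroups $S^1\subseteq T^m$ with $\varnothing\ne M^{S^1}\ne M$, I would choose one whose fixed-point set has a component $F=F^{n-d}$ of minimal codimension $d$. Since $M$ is simply connected, hence orientable, $d$ is even and $d\ge 2$, and $n-d$ is odd. The quotient torus $T^{m-1}=T^m/S^1$ acts on the totally geodesic submanifold $F$. The first claim is that $d=2$. If $d\ge 4$, then either the $T^{m-1}$-action on $F$ has non-trivial kernel — in which case the subgroup of $T^m$ fixing $F$ pointwise has rank $\ge 2$, and Proposition \ref{prop:mincodim} (via the remark following it) produces a circle with a strictly larger fixed-point component, contradicting minimality of $d$ — or $T^{m-1}$ acts effectively on $F$, so that $\symrank(F)\ge m-1=\tfrac{n-1}{2}>\lfloor\tfrac{(n-d)+1}{2}\rfloor$, contradicting Proposition \ref{prop:symrankbound}; when $\dim F=1$ the torus $T^{m-1}$ with $m-1\ge 2$ cannot act effectively on $F\cong S^1$, so we are back in the first case. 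Hence $d=2$, and the same Proposition \ref{prop:mincodim} argument (a rank-$2$ torus fixing $F$ would force a codimension-$0$, hence trivial, circle action) shows that $T^{m-1}$ acts effectively on $F^{n-2}$. Thus $F^{n-2}$ is a closed, totally geodesic, odd-dimensional submanifold with $\Ric_2>0$ carrying an effective isometric action of a torus of rank $m-1=\tfrac{(n-2)+1}{2}$.

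To invoke the induction hypothesis on $F$ I still need $F$ simply connected, and this is where Theorem \ref{result:connprinc} enters: taking $G$ to be the positive-dimensional subgroup of the isometry group fixing $F$ pointwise, the inclusion $F\hookrightarrow M$ is $(n-2d+2-k+\delta(G))$-connected, which with $k=2$, $d=2$, and $\delta(G)\ge 1$ is at least $(n-3)$-connected; since $n-3\ge 2$ this gives $\pi_1(F)\cong\pi_1(M)=0$. By induction $F^{n-2}$ is diffeomorphic to $S^{n-2}$. Feeding $F\cong S^{n-2}\hookrightarrow M^n$ being $(n-3)$-connected into the long exact homotopy sequence yields $\pi_i(M)=0$ for $i\le n-3$; as $M$ is a closed oriented $n$-manifold, Poincaré duality together with the universal coefficient theorem then forces $H_*(M;\mathbb{Z})\cong H_*(S^n;\mathbb{Z})$, so $M$ is a homotopy $n$-sphere. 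Finally, $M$ is a homotopy $n$-sphere in odd dimension carrying an effective smooth $T^{(n+1)/2}$-action of maximal rank, so the classification of such actions due to Montgomery and Yang \cite{MontgomeryYang} (together with \cite{FangRong2004}) gives that $M$ is diffeomorphic to the standard $S^n$.

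I expect the main obstacle to be precisely this last step. Everything up to ``$M$ is a homotopy sphere'' is soft and uses only the new tools of the paper (the isotropy rank lemma, the symmetry rank bound, and the connectedness principle), together with standard fixed-point arithmetic; but passing from a homotopy sphere to the \emph{standard} sphere is genuine smooth rigidity and must be imported from the theory of maximal torus actions. This is exactly the reason the conclusion here is a diffeomorphism, in contrast with the merely homeomorphic conclusion available under the weakened symmetry rank hypothesis of Theorem \ref{result:odd}. A secondary technical point to handle carefully is the bookkeeping in the ``$d=2$'' dichotomy for small $\dim F$, and the verification that the relevant fixed-point components really are components of the torus fixed-point sets to which Proposition \ref{prop:mincodim} applies.
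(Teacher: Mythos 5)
Your proposal follows the same route as the paper's proof: induction on odd dimension, extraction of a codimension-$2$ fixed-point component $N^{n-2}$ (your $d=2$ dichotomy is essentially the paper's Lemma~\ref{lem:codim2}), the connectedness principle to make $N$ simply connected and get the $(n-3)$-connected inclusion, Poincar\'e duality to conclude $M$ is a homotopy sphere, and Montgomery--Yang for the final smooth rigidity. The only structural difference is cosmetic: you argue the homotopy-sphere conclusion directly via Hurewicz plus duality rather than via Corollary~\ref{cor:homotopysphere}, and you invoke the connectedness principle \emph{before} the induction hypothesis so as to supply simple connectedness of $N$ explicitly.

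There is, however, one genuine gap in the final step. Lemma~\ref{lem:MontgomeryYang} (Montgomery--Yang) requires the \emph{entire} fixed-point set of the circle action to be a simply connected codimension-$2$ submanifold, but your argument only produces one component $F$ of minimal codimension. You must rule out additional components before the lemma applies. The paper does this at the last moment by observing that fixed-point sets of circle actions on homotopy spheres are integral cohomology spheres (Borel), hence connected, so $F$ coincides with $M^{S^1}$. Without this you have no control over other components of $M^{S^1}$, and Lemma~\ref{lem:MontgomeryYang} does not apply as stated. Also, the citation of \cite{FangRong2004} alongside Montgomery--Yang is spurious here; Fang--Rong is used only in the complex-projective-space cases, not for odd-dimensional spheres. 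Finally, a small precision: in your $d\ge 4$ dichotomy, ``non-trivial kernel'' should be ``positive-dimensional kernel'' (so that $F$ is fixed by a rank-$2$ torus and Proposition~\ref{prop:mincodim} applies); a finite kernel falls into the other branch, where $T^{m-1}/\ker$ still acts effectively with the same rank so the symmetry-rank bound carries the contradiction.
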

		
		As mentioned in the introduction, Grove and Searle used Alexandrov geometry of positively curved orbit spaces to establish their classification of closed, connected $n$-manifolds with positive sectional curvature and symmetry rank $\lfloor\frac{n+1}{2}\rfloor$. 
		Because sectional curvatures are allowed to be negative when $\Ric_2>0$, we rely on Theorem \ref{result:connprinc}, algebraic topology, and a result by Montgomery and Yang \cite{MontgomeryYang} concerning circle actions on homotopy spheres to establish the diffeomorphism classification in Theorem \ref{thm:oddmaxsymrank}, rather than Alexandrov geometry.
		
		\begin{remark}\label{rmk:hamilton}
			Hamilton used Ricci flow in \cite{hamilton} to show that all closed Ricci-positive $3$-manifolds admit metrics of constant positive sectional curvature, and hence are spherical space forms.
			Thus, Theorem \ref{thm:oddmaxsymrank} holds in dimension $n=3$.
		\end{remark}
		
		In light of Remark \ref{rmk:hamilton}, we focus on odd dimensions $n\geq 5$.
		To prove Theorem \ref{thm:oddmaxsymrank}, first we establish that torus actions of maximal rank must have a circle subaction with fixed point set of codimension $2$:
		
		\begin{lemma}\label{lem:codim2}
			Suppose $M^n$ is a closed Riemannian manifold of odd dimension $n\geq 5$ with $\Ric_2>0$. 
			If a torus $T^r$ of rank $r =  \frac{n+1}{2} $ acts effectively and by isometries on $M^n$, then there is a subgroup $S^1 \subset T^r$ whose fixed point set has a connected component $N^{n-2}$ of codimension 2, and the action by $T^{r-1} \defeq T^r/S^1$ on $N^{n-2}$ is effective.
		\end{lemma}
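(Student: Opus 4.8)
The plan is to single out, among all fixed-point components of nontrivial subtori of $T^r$, one of \emph{minimal} codimension $d$, show that its isotropy collapses to a circle, and then use the symmetry rank bound on that component to force $d=2$. Since $r=\tfrac{n+1}{2}\ge 3$, Proposition \ref{result:torusfixedpt} (applied with $k=2$) gives a corank-$2$ subtorus of $T^r$ with a fixed point, so the collection of pairs $(T',F)$ with $T'\subseteq T^r$ a nontrivial subtorus and $F$ a connected component of $M^{T'}$ is nonempty; I would choose one with $d\defeq\codim_M F$ minimal. Replacing $T'$ by the identity component of the full isotropy subgroup $K\defeq\{g\in T^r: g|_F=\mathrm{id}\}$ only shrinks $M^{T'}$, and by minimality the component through $F$ cannot shrink, so afterwards $T'=K^\circ$ and $F$ is still a component of $M^{T'}$ with $\codim_MF=d$. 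Because $T^r$ is abelian it preserves $F$, and because $T^r$ is effective on the connected manifold $M$ we get $F\neq M$, so $d\ge1$. Since $F$ is a component of $M^{T'}$, the space $T_pF$ ($p\in F$) is exactly the fixed subspace of $T'$ in $T_pM$, so $T'$ has no nonzero fixed vector on $\nu_pF\cong\mathbb{R}^d$; that representation is therefore a sum of planar rotations, $d$ is even, and $d\ge2$. As $n$ is odd, $\dim F=n-d$ is odd.

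Next I would show $\rank T'=1$. If $\rank T'\ge 2$, pick $T^2\subseteq T'$: it fixes $F$ pointwise, and since $T^2$ is nontrivial and $T^r$ is effective, $M^{T^2}\neq M$. By minimality of $d$, the component of $M^{T^2}$ containing $F$ has codimension $\ge d$, hence equals $F$, so $F$ is a connected component of $M^{T^2}$. The consequence of Proposition \ref{prop:mincodim}(2) recorded in the text then produces a circle subgroup of $T^2$ whose fixed-point component containing $F$ has codimension strictly less than $d$, contradicting minimality. Hence $\rank T'=1$; write $S^1\defeq T'$, so $S^1=K^\circ$, $\dim K=1$, and $F$ is a component of $M^{S^1}$.

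Then I would pin down $d=2$ and verify effectiveness. The quotient $T^r/K$ acts effectively and isometrically on $F$ and is a torus of dimension $r-\dim K=r-1$, so $\symrank(F)\ge r-1=\tfrac{n-1}{2}\ge 2$; since a torus of rank $\ge 2$ cannot act effectively on a $1$-manifold and $\dim F$ is odd, $\dim F\ge 3$. As $F$ is totally geodesic in $M$, $\Ric_2(F)>0$, so the odd-dimensional symmetry rank bound, Proposition \ref{prop:symrankbound}\eqref{item:2odd}, gives $\tfrac{n-1}{2}\le\symrank(F)\le\tfrac{n-d+1}{2}$, whence $d\le 2$; with $d\ge 2$ this forces $d=2$. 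Set $N\defeq F$, a codimension-$2$ component of $M^{S^1}$. For effectiveness of $T^{r-1}\defeq T^r/S^1$ on $N$ it suffices to see $K=S^1$: given $g\in K$, the differential $dg_p$ preserves $\nu_pN\cong\mathbb{R}^2$ and acts there as an element of $O(2)$ commuting with the $S^1$-action, which is a nonzero-weight rotation action and hence surjects onto $SO(2)$; thus $dg_p|_{\nu_pN}=ds_p|_{\nu_pN}$ for some $s\in S^1$, while both restrict to the identity on $T_pN$. Then $gs^{-1}$ is an isometry of $M$ with identity differential at $p$, so $gs^{-1}=\mathrm{id}$ and $g=s\in S^1$. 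Hence $K=S^1$, so $T^{r-1}=T^r/S^1$ acts effectively on $N^{n-2}$, as required.

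The main obstacle I anticipate is the passage to the totally geodesic submanifold $F$: one must be sure $\dim F$ is large enough (and of the right parity) for the symmetry rank bound to apply — handled by the $1$-manifold observation and by the fact that $d$ is even with $n$ odd — and one must invoke precisely the odd-dimensional version of that bound. A secondary point requiring care is the reduction to a circle, where one has to first verify that $F$ is genuinely a \emph{component} of the $T^2$-fixed set (not merely contained in one) before applying Proposition \ref{prop:mincodim}; this is exactly where the minimality of $d$ is used a second time.
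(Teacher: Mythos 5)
Your proof is correct, and its overall skeleton coincides with the paper's: use Proposition \ref{result:torusfixedpt} to get a nonempty isotropy locus, pick a fixed-point component $N$ of minimal codimension, and use Proposition \ref{prop:mincodim} together with the odd-dimensional symmetry rank bound (Proposition \ref{prop:symrankbound}, item \eqref{item:2odd}) to force $\codim N = 2$. You order things slightly differently — you first pin down via minimality and Proposition \ref{prop:mincodim} that the connected isotropy $K^\circ$ is a circle, and then apply the symmetry rank bound a single time to $F$ — whereas the paper runs a case analysis on $\dim N$ and derives a kernel of positive dimension as the contradiction; both are the same ideas rearranged.

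The genuine divergence is in the effectiveness step, where your argument is more elementary than the paper's. The paper shows that if the $T^{r-1}$-action on $N$ had a nontrivial (necessarily finite) kernel then $N$ would be fixed by a $\mathbb{Z}_m \times \mathbb{Z}_m$ subgroup, forces $m = 2$ from the faithful representation on the $2$-plane $\nu_x N$, and then derives a contradiction by finding a codimension-$1$ hypersurface fixed by the resulting involution and invoking the symmetry rank bound once more. You avoid all of that: you observe that $T^r$ being abelian makes any $g$ in the full isotropy $K$ commute with $S^1$, so $dg_p|_{\nu_p N}$ lies in the centralizer of the surjective image of $S^1$ in $SO(2)$ — hence in $SO(2)$ itself — and therefore matches $ds_p|_{\nu_p N}$ for some $s \in S^1$; since $g$ and $s$ both fix $T_p N$ pointwise, $gs^{-1}$ has identity differential at $p$ and is the identity. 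This directly yields $K = S^1$ without ever passing through an auxiliary codimension-$1$ submanifold or a second application of the symmetry rank bound, so it is both shorter and more self-contained, at the modest cost of requiring the standard fact that an isometry of a connected Riemannian manifold with a fixed point and identity differential there is the identity.
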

		
		% Cannot happen in dimension 6. (S^3 x S^3)
		
		\begin{proof}
			Because $n\geq 5$, $r=\frac{n+1}{2}\geq 3$. Thus by Proposition \ref{result:torusfixedpt}, there exists at least one circle subgroup of $T^r$ with non-empty fixed point set. Among the collection of components of fixed point sets of all circle subgroups of $T^r$, choose an element $N$ that has minimal codimension in $M^n$, and let $S^1$ denote the circle subgroup that fixes $N$. Then $N$ has even codimension in $M^n$, and $T^{r-1}\defeq T^r/S^1$ acts on $N$. 
			
			We first prove that the codimension of $N$ must be $2$.
			Because $r\geq 3$, we have $r-1\geq 2$. So if $\dim(N)=1$, then the $T^{r-1}$-action on $N$ has kernel of rank at least $1$, and hence $N$ is fixed by a $T^2\subset T^r$. Thus, by Proposition \ref{prop:mincodim}, the codimension of $N$ is not minimal, which is a contradiction.
			Suppose now that $3\leq \dim(N)\leq n-4$. Then because $N$ is totally geodesic, $\Ric_2(N)>0$. Thus because $r-1=\frac{n-1}{2}>\frac{(n-4)+1}{2} \geq \frac{\dim(N) +1}{2}$, the $T^{r-1}$-action on $N$ has kernel of dimension at least $1$ by Proposition \ref{result:symrankbound}. So again, by Proposition \ref{prop:mincodim}, the codimension of $N$ is not minimal, which is a contradiction. 
			Therefore, the codimension of $N$ must be $2$.
			
			Now we will show that the $T^{r-1}$-action on $N^{n-2}$ is effective.
			By Proposition \ref{prop:mincodim}, the kernel of the $T^{r-1}$-action on $N^{n-2}$ is at most finite.
			If the kernel is non-trivial, because $N^{n-2}$ is also fixed by $S^1$, it follows that $N^{n-2}$ is fixed by a subgroup of $T^r$ of the form $\mathbb{Z}_m\times \mathbb{Z}_m$ for some natural number $m$.
			Let $\nu_xN^{n-2}$ denote the $2$-dimensional normal space to $N$ at an arbitrary point $x$.
			Then we have a faithful representation $\mathbb{Z}_m\times \mathbb{Z}_m \to \mathsf{GL}(\nu_xN^{n-2})$, which is only possible if $m = 2$.
			Thus, there exists an involution $\sigma \in T^r$ that fixes $N^{n-2}$, and it follows that the component $F$ of the fixed point set of $\sigma$ containing $N^{n-2}$ is of codimension $1$ in $M^n$.
			$F^{n-1}$ is invariant under the $T^r$-action, and because $F^{n-1}$ is totally geodesic, it follows that $\Ric_2(F^{n-1})>0$.
			Then by Proposition \ref{result:symrankbound}, there must exist a circle subgroup of $T^r$ that fixes $F^{n-1}$, which contradicts that fact that $N^{n-2}$ was chosen to have minimal codimension.
			Therefore, the $T^{r-1}$-action on $N^{n-2}$ must be effective.
		\end{proof}
		
		Now, we recall the following consequence of Poincar\'e Duality:
		
		\begin{lemma}[Lemma 2.2 in \cite{connprinc}]\label{lem:periodic}
			Let $M^n$ and $N^{n-d}$ be connected, closed, orientable manifolds.
			Suppose the inclusion $N^{n-d}\hookrightarrow M^n$ is $(n-d-l)$-connected, with $n-d-2l>0$.
			Let $[N]\in H_{n-d}(M^n;\mathbb{Z})$ denote the image of the fundamental class of $N$, and let $e\in H^d(M^n;\mathbb{Z})$ denote its Poincar\'e dual.
			Then the homomorphisms $\cup e: H^i(M;\mathbb{Z})\to H^{i+d}(M;\mathbb{Z})$ given by $x\mapsto x\cup e$ are surjective for $l\leq i<n-d-l$ and injective for $l<i\leq n-d-l$.
		\end{lemma}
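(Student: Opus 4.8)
The statement is a standard consequence of Poincar\'e duality together with the connectedness of the inclusion, and the plan is to factor the homomorphism $\cup\, e$ through $N$. Write $f\colon N^{n-d}\hookrightarrow M^n$ for the inclusion and set $j\defeq n-d-l$; the hypothesis $n-d-2l>0$ is exactly the inequality $l<j$. Since $M$ and $N$ are connected, closed, and orientable, fix compatible orientations and let $\mathrm{PD}_M\colon H^i(M;\mathbb{Z})\xrightarrow{\ \cong\ }H_{n-i}(M;\mathbb{Z})$ and $\mathrm{PD}_N\colon H^i(N;\mathbb{Z})\xrightarrow{\ \cong\ }H_{n-d-i}(N;\mathbb{Z})$ denote the Poincar\'e duality isomorphisms, given by cap product with the respective fundamental classes.

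The first key step is the Gysin (umkehr) identity. Define $f_!\defeq \mathrm{PD}_M^{-1}\circ f_*\circ \mathrm{PD}_N\colon H^i(N;\mathbb{Z})\to H^{i+d}(M;\mathbb{Z})$. The projection formula $f_!\big(f^*x\cup y\big)=x\cup f_!(y)$ for $x\in H^*(M;\mathbb{Z})$ and $y\in H^*(N;\mathbb{Z})$, applied with $y=1$, gives $f_!(1)=\mathrm{PD}_M^{-1}\big(f_*[N]\big)=e$ and hence $x\cup e=f_!\big(f^*x\big)$ for every $x\in H^i(M;\mathbb{Z})$. Thus $\cup\, e$ is the composite
\begin{align*}
  H^i(M;\mathbb{Z})&\xrightarrow{\ f^*\ }H^i(N;\mathbb{Z})\xrightarrow[\cong]{\ \mathrm{PD}_N\ }H_{n-d-i}(N;\mathbb{Z})\\
  &\xrightarrow{\ f_*\ }H_{n-d-i}(M;\mathbb{Z})\xrightarrow[\cong]{\ \mathrm{PD}_M^{-1}\ }H^{i+d}(M;\mathbb{Z}).
\end{align*}

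The second step is to convert the connectedness hypothesis into statements about (co)homology. Since $f$ is $j$-connected, its homotopy fiber is $(j-1)$-connected, so a Serre spectral sequence argument (equivalently, relative Hurewicz together with the long exact sequences of the pair $(M,N)$ and universal coefficients) shows that $f_*\colon H_i(N;\mathbb{Z})\to H_i(M;\mathbb{Z})$ is an isomorphism for $i<j$ and an epimorphism for $i=j$, while $f^*\colon H^i(M;\mathbb{Z})\to H^i(N;\mathbb{Z})$ is an isomorphism for $i<j$ and a monomorphism for $i=j$.

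It then remains to assemble. In the composite above, $\mathrm{PD}_N$ and $\mathrm{PD}_M^{-1}$ are always isomorphisms, so the behaviour of $\cup\, e$ at degree $i$ is governed by $f^*$ in degree $i$ and by $f_*$ in degree $n-d-i$. For $l\le i<j$ we have $i<j$, so $f^*$ is an isomorphism, and $n-d-i\le j$, so $f_*$ is an epimorphism; hence $\cup\, e$ is surjective. For $l<i\le j$ we have $i\le j$, so $f^*$ is a monomorphism, and $n-d-i<j$, so $f_*$ is an isomorphism; hence $\cup\, e$ is injective. Since $j=n-d-l$, these are precisely the asserted ranges. I expect the only delicate points to be the careful derivation of the identity $x\cup e=f_!(f^*x)$ — pinning down the Poincar\'e duality normalizations, the projection formula, and signs — and the standard but mildly technical passage from $\pi_*$-connectivity to the integral homology statements in the possibly non-simply-connected setting; everything else is bookkeeping with the four-term composition.
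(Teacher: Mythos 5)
Your proof is correct. Note that the paper does not prove this lemma itself; it is quoted verbatim from Wilking (Lemma 2.2 of \cite{connprinc}), so there is no in-paper argument to compare against. Your factorization $\cup\,e = \mathrm{PD}_M^{-1}\circ f_*\circ\mathrm{PD}_N\circ f^*$ via the Gysin map and the projection formula, followed by reading off the (co)homological consequences of $j$-connectedness, is the standard proof and is essentially what Wilking does. The bookkeeping of ranges is right: for $l\le i<j$ one needs $f^*$ iso in degree $i$ (holds since $i<j$) and $f_*$ at least epi in degree $n-d-i\le j$; for $l<i\le j$ one needs $f^*$ mono in degree $i\le j$ and $f_*$ iso in degree $n-d-i<j$. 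The two points you flag as delicate are indeed the only ones requiring care, and both are handled correctly: the identity $x\cup e=f_!(f^*x)$ is insensitive to a sign in $e$ (which does not affect surjectivity/injectivity of $\cup\,e$), and the passage from $\pi_*$-connectivity to the integral (co)homology statements goes through the observation that the only relevant $E_2$-entry of the Serre spectral sequence with nonzero $q$-range has coefficients $H_0(F)\cong\mathbb{Z}$ carrying the trivial $\pi_1(M)$-action, so no local-coefficient issues arise even without simple connectedness.
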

		
		The following is a simple consequence of Lemma \ref{lem:periodic}; for details, see Fang and Rong's proof of their Lemma 4.2 in \cite{FangRong2004}.
		
		\begin{corollary}\label{cor:homotopysphere}
			Suppose $M^n$ is a closed, odd-dimensional, simply connected, smooth manifold.
			Assume $M^n$ contains a closed, connected, embedded submanifold $N^{n-2}$ of codimension $2$ such that the inclusion $N^{n-2}\hookrightarrow M^n$ is $(n-3)$-connected.
			Then $M^n$ and $N^{n-2}$ are both homotopy equivalent to spheres.
		\end{corollary}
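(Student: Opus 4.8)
The plan is to use Lemma~\ref{lem:periodic} to show that $M^n$ has the integral cohomology of $S^n$, to upgrade this to a homotopy equivalence $M^n\simeq S^n$ via the Hurewicz and Whitehead theorems, and then to transfer the conclusion to $N^{n-2}$ using the connectivity of the inclusion together with Poincar\'e duality on $N^{n-2}$. First I would note that the hypotheses match the setup of Lemma~\ref{lem:periodic} with $d=2$ and $l=1$, so that ``$(n-d-l)$-connected'' becomes exactly ``$(n-3)$-connected''; applicability requires $n-d-2l=n-4>0$, so I work with $n\geq5$ (in the intended application, Theorem~\ref{thm:oddmaxsymrank}, the case $n=3$ is handled separately). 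Since $M^n$ is simply connected it is orientable and carries a fundamental class, so Lemma~\ref{lem:periodic} furnishes a class $e\in H^2(M^n;\mathbb{Z})$ for which $\cup e\colon H^i(M^n;\mathbb{Z})\to H^{i+2}(M^n;\mathbb{Z})$ is surjective for $1\leq i\leq n-4$ and injective for $2\leq i\leq n-3$.

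Next I would compute $H^\ast(M^n;\mathbb{Z})$. Simple connectivity gives $H^1(M^n;\mathbb{Z})=0$, and Poincar\'e duality gives $H^{n-1}(M^n;\mathbb{Z})\cong H_1(M^n;\mathbb{Z})=0$. Propagating the vanishing of $H^1$ upward through the surjective maps $\cup e$ forces $H^3,H^5,\dots,H^{n-2}$ all to vanish, while propagating the vanishing of $H^{n-1}$ downward through the injective maps $\cup e$ forces $H^{n-3},H^{n-5},\dots,H^2$ all to vanish. The decisive point is parity: because $n$ is odd, $\{1,3,\dots,n-2\}$ is the full set of odd degrees strictly between $0$ and $n$, and because $n-1$ is even, $\{2,4,\dots,n-1\}$ is the full set of even degrees strictly between $0$ and $n$. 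Hence $H^i(M^n;\mathbb{Z})=0$ for $0<i<n$, and by Poincar\'e duality also $H_i(M^n;\mathbb{Z})=0$ for $0<i<n$, so $M^n$ is an integral homology $n$-sphere. Since $M^n$ is simply connected, the Hurewicz theorem gives $\pi_n(M^n)\cong\mathbb{Z}$ with the Hurewicz map an isomorphism, so a map $S^n\to M^n$ representing a generator induces an isomorphism on all homology groups; by Whitehead's theorem it is a homotopy equivalence, i.e.\ $M^n\simeq S^n$.

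It remains to treat $N^{n-2}$. Since the inclusion is $(n-3)$-connected and $n-3\geq2$, the induced map $\pi_1(N^{n-2})\to\pi_1(M^n)$ is an isomorphism, so $N^{n-2}$ is simply connected (hence orientable); moreover $\pi_i(N^{n-2})\cong\pi_i(M^n)\cong\pi_i(S^n)=0$ for $0<i<n-3$, so $N^{n-2}$ is $(n-4)$-connected. By the Hurewicz theorem, $H_i(N^{n-2};\mathbb{Z})=0$ for $0<i\leq n-4$, and Poincar\'e duality on the closed orientable manifold $N^{n-2}$ gives $H_{n-3}(N^{n-2};\mathbb{Z})\cong H^1(N^{n-2};\mathbb{Z})=0$ together with $H_{n-2}(N^{n-2};\mathbb{Z})\cong\mathbb{Z}$. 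These account for every degree from $1$ to $n-2$, so $N^{n-2}$ is an integral homology $(n-2)$-sphere, and being simply connected it is homotopy equivalent to $S^{n-2}$ by the same Hurewicz--Whitehead argument.

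I do not anticipate a genuine obstacle: the argument is essentially bookkeeping with the periodicity operators of Lemma~\ref{lem:periodic}. The only steps needing care are pinning down the endpoints of the two ranges of degrees so that the upward and downward chains together cover every degree strictly between $0$ and $n$ --- this is exactly where the oddness of $n$ enters --- and verifying the dimension restriction $n\geq5$ under which Lemma~\ref{lem:periodic} applies.
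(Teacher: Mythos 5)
Your proof is correct and follows the same route the paper intends: apply Lemma~\ref{lem:periodic} with $d=2$, $l=1$, propagate the vanishing of $H^1(M^n;\mathbb{Z})$ upward and of $H^{n-1}(M^n;\mathbb{Z})$ downward using the parity of $n$, and finish with Hurewicz--Whitehead and Poincar\'e duality on $N^{n-2}$.  The paper itself only cites Fang and Rong's Lemma 4.2 for these details, so your write-up is essentially a transcription of that argument rather than a new approach.
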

		
		Finally, we recall a result proven by Montgomery and Yang.\footnote{Montgomery and Yang originally excluded dimensions $4$, $5$, and $6$, but we now know that their argument also holds in these dimensions thanks to the resolution of the Poincar\'e conjecture.}
		
		\begin{lemma}[Proposition 3 in \cite{MontgomeryYang}]\label{lem:MontgomeryYang}
			Suppose $M^n$ is a homotopy sphere, and assume the circle $S^1$ acts smoothly on $M^n$ such that the fixed point set $N^{n-2}$ is simply connected and of codimension $2$ in $M^n$.
			Then $M^n$ is diffeomorphic to the standard sphere $S^n$ such that the $S^1$-action on $M^n$ is smoothly equivalent to a linear circle action on $S^n$.
		\end{lemma}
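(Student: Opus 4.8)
The plan is to read off the global structure of the action from its local model along $N$, reduce the problem to reconstructing $M^n$ from orbit-space data, and then recognize the result as the standard sphere with a linear action.

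First I would pin down the normal data. Since $N=N^{n-2}$ has codimension $2$, the slice representation of $S^1$ at a point of $N$ acts on $\mathbb{R}^2\cong\mathbb{C}$ fixing the origin, hence by $z\mapsto\lambda^m z$ for some integer $m$; if $|m|\geq 2$ then $\mathbb{Z}_m\subset S^1$ fixes an entire tubular neighborhood of $N$, hence all of the connected manifold $M^n$, contradicting effectiveness. So the slice representation is the standard rotation, a tubular neighborhood of $N$ is equivariantly the disk bundle $D(E)$ of a complex line bundle $E\to N$ with $S^1$ acting by fiberwise rotation, and for every prime $p$ the fixed set $\mathrm{Fix}(\mathbb{Z}_p)$ meets this neighborhood only in $N$. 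Now Smith theory enters: $\mathrm{Fix}(\mathbb{Z}_p)$ is a mod-$p$ homology sphere containing the closed $(n-2)$-manifold $N$; the local computation forbids it from being $(n-1)$-dimensional (the case $p=2$) and from having dimension of the wrong parity (the case $p$ odd), so $\mathrm{Fix}(\mathbb{Z}_p)$ is an $(n-2)$-dimensional mod-$p$ homology sphere, which, containing $N$, must equal $N$. Hence no point off $N$ has nontrivial isotropy: the action is \emph{semifree}. Combining this over all $p$ with the rational analogue for the full circle action shows that $N$ is an integral homology $(n-2)$-sphere, hence — being simply connected — a homotopy $(n-2)$-sphere.

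Next I would analyze the orbit space $Q\defeq M^n/S^1$. Semifreeness makes $Q$ a smooth compact $(n-1)$-manifold with $\partial Q=N$, and $M^n\setminus N\to\mathrm{int}\,Q$ a principal $S^1$-bundle. Alexander duality in $M^n$, together with $N\simeq S^{n-2}$, gives that $M^n\setminus N$ has the integral homology of $S^1$; feeding this into the Gysin sequence of the bundle yields $\widetilde H^*(Q;\mathbb{Z})=0$ (and, as a by-product, that the bundle is trivial). Since $M^n$ is simply connected and $S^1$ is connected, $Q$ is simply connected (a standard fact about orbit spaces of connected-group actions). Thus $Q$ is a contractible compact $(n-1)$-manifold with boundary $N$, so by the h-cobordism theorem — using the resolution of the Poincar\'e conjecture in low dimensions, which is precisely where the original argument needed it — we obtain $N\cong S^{n-2}$ and $Q\cong D^{n-1}$. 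Writing $M^n=D(E)\cup_{S(E)}W$ with $W$ the closure of $M^n\setminus D(E)$, a free $S^1$-manifold with $W/S^1=Q\cong D^{n-1}$, we get $W\cong D^{n-1}\times S^1$ (the bundle over a contractible base is trivial) and $D(E)\cong S^{n-2}\times D^2$ ($E$ is trivial), with $S^1$ rotating the $D^2$-factor. Hence $M^n$ is the equivariant gluing of $S^{n-2}\times D^2$ and $D^{n-1}\times S^1$ along $S^{n-2}\times S^1$; after an equivariant isotopy that straightens the clutching data — the twisting map $S^{n-2}\to S^1$ is null-homotopic, and the residual diffeomorphism of $S^{n-2}$ is absorbed into the two trivializations — this is exactly the standard decomposition $S^n=\partial(D^{n-1}\times D^2)$ with its linear circle action. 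Therefore $M^n$ is equivariantly diffeomorphic to $S^n$ with a linear $S^1$-action.

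The crux is the step from ``$Q$ is contractible'' to ``$Q\cong D^{n-1}$ and $N\cong S^{n-2}$'': this is exactly where the h-cobordism theorem, and the Poincar\'e conjecture in low dimensions, are indispensable, and it is the step that forced Montgomery and Yang to exclude small dimensions. A secondary technical point is verifying that the equivariant clutching diffeomorphism in the reassembly is isotopic to the standard one — so that the reconstructed sphere is the \emph{standard} one rather than an exotic twist — which is routine but must be carried out compatibly with the $S^1$-equivariance.
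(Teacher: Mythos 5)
This lemma is not proved in the paper at all: it is imported as Proposition~3 of \cite{MontgomeryYang}, with only a footnote claiming that the dimensions excluded there are now covered by the resolution of the Poincar\'e conjecture. So the relevant comparison is with Montgomery and Yang's own argument, and your sketch is essentially a reconstruction of it: reduce to an effective action, observe the normal weight is $\pm1$, get semifreeness from Smith theory (the cleanest phrasing is that $\mathrm{Fix}(\mathbb{Z}_p)$ is a mod-$p$ cohomology sphere which coincides with $N$ near $N$, hence is connected and equals $N$ --- the parity remarks are not what does the work), conclude $N$ is a homotopy sphere, show the orbit space $Q$ is a compact contractible smooth manifold with $\partial Q=N$ via Alexander duality, the Gysin sequence, and surjectivity of $\pi_1(M)\to\pi_1(M/S^1)$, then apply the h-cobordism theorem and reassemble $M^n$ equivariantly from $S^{n-2}\times D^2$ and $D^{n-1}\times S^1$. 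For $n\geq 7$ (orbit space of dimension $\geq 6$) this outline is sound; the points you compress --- the smooth structure on $Q$ as a manifold with boundary, why $H^2(Q;\mathbb{Z})$ actually vanishes rather than merely being cyclic (use that the orbit near $N$ is a meridian generating $H_1(M\setminus N)\cong\mathbb{Z}$, or the periodicity $H^j\cong H^{j+2}$ together with vanishing in degrees $\geq n-1$), triviality of the normal bundle, and the equivariant standardization of the clutching map --- are routine and are the content of \cite{MontgomeryYang}.

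The one genuine over-claim is your parenthetical (shared with the paper's footnote) that the low-dimensional cases follow from ``the resolution of the Poincar\'e conjecture.'' For $n=4$ this is fine: Perelman gives $Q^3\cong D^3$. But for $n=5$ your crux step ``$Q$ contractible $\Rightarrow Q\cong D^{n-1}$'' asks that a compact contractible smooth $4$-manifold with boundary $S^3$ be a disk, which is essentially the open smooth $4$-dimensional Poincar\'e conjecture; indeed, spinning a hypothetical exotic homotopy $4$-sphere (glue $(\Sigma\setminus B^4)\times S^1$ to $S^3\times D^2$) would yield a smooth semifree circle action on $S^5$ satisfying all hypotheses but with non-disk orbit space, hence non-linear. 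For $n=6$ one needs $N^4$ to be diffeomorphic, not merely homeomorphic, to $S^4$ before the $5$-dimensional step can be run. So the full linearity statement in dimensions $5$ and $6$ does not follow from what is currently known. This caveat does not affect how the lemma is used in the paper: there only the conclusion $M^n\cong S^n$ is invoked (automatic in dimensions $5$ and $6$, where homotopy spheres are standard), and in the inductive applications $N$ is already a standard sphere, which restores $Q\cong D^{n-1}$ when $n=6$ via capping off and uniqueness of embedded disks.
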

		
		We are now ready to classify odd-dimensional manifolds with $\Ric_2>0$ and maximal symmetry rank.

		\begin{proof}[Proof of Theorem \ref{thm:oddmaxsymrank}]
			We will prove Theorem \ref{thm:oddmaxsymrank} by induction on the dimension $n$.
			For the base case, as mentioned in Remark \ref{rmk:hamilton}, the only simply connected, closed $3$-manifold with $\Ric_2>0$ is $S^3$.
			Now, assume Theorem \ref{thm:oddmaxsymrank} holds in odd dimensions $3,\dots,n-2$ for some $n\geq 5$, let $(M^n,g)$ be a closed, simply connected, odd-dimensional Riemannian manifold with $\Ric_2>0$, and suppose a torus $T^r$ of rank $r =  \frac{n+1}{2}$ acts effectively and by isometries on $M^n$.
			By Lemma \ref{lem:codim2}, there is a subgroup $S^1\subset T^r$ whose fixed point set has a connected component $N^{n-2}$ of codimension 2 such that $T^{r-1}\defeq T^r/S^1$ acts effectively on $N^{n-2}$.
			Because $N^{n-2}$ is totally geodesic and $n-2\geq 3$, we have that $\Ric_2(N^{n-2})>0$.
			Thus, by the induction hypothesis, $N^{n-2}$ is diffeomorphic to $S^{n-2}$.
			By Theorem \ref{result:connprinc}, the inclusion $N^{n-2}\hookrightarrow M^n$ is $(n-3)$-connected.
			Then by Corollary \ref{cor:homotopysphere}, $M^n$ is homotopy equivalent to a sphere.
			It is well-known that fixed point sets of circle actions on homotopy spheres are integral cohomology spheres; see for example \cite{Borel}.
			In particular, $N^{n-2}$ constitutes the entire fixed point set of the circle action.
			Therefore, by Lemma \ref{lem:MontgomeryYang}, $M^n$ is diffeomorphic to $S^n$.
		\end{proof}
	
	\subsection{Homeomorphism classification}\label{sec:oddhomeo}
		
		In this section, we will prove Theorem \ref{result:odd}, which we restate here for convenience:
		
		\begin{theorem}\label{thm:3n+10/8}
			Let $M^n$ be a closed, simply connected, Riemannian manifold of odd-dimension $n\geq 7$ with $\Ric_2>0$.
			Suppose a torus $T^r$ of rank $r\geq\tfrac{3n+10}{8}$ acts effectively and by isometries on $M^n$.
			Then $M^n$ is homeomorphic to $S^n$.
		\end{theorem}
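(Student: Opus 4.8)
The plan is to induct on the odd dimension $n \ge 7$. When $n \in \{7,9,11,13\}$ one has $\lceil \tfrac{3n+10}{8} \rceil = \tfrac{n+1}{2}$, so the hypothesis forces maximal symmetry rank and Theorem \ref{thm:oddmaxsymrank} already gives that $M^n$ is diffeomorphic, hence homeomorphic, to $S^n$. So fix an odd $n \ge 15$ and assume the theorem holds in all odd dimensions $n' \in \{7,\dots,n-2\}$. Since $r \ge 3$, Proposition \ref{result:torusfixedpt} furnishes a circle subgroup of $T^r$ with a fixed point; among all circle subgroups of $T^r$ possessing a fixed point, choose $S^1 \subset T^r$ and a connected component $N$ of $M^{S^1}$ of minimal codimension $d$. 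Effectiveness of the $T^r$-action forces $d \ge 2$, and $d = 2c$ is even since $N$ is a component of the fixed-point set of a circle action. Exactly as in the proof of Lemma \ref{lem:codim2}, minimality of $d$ together with Proposition \ref{prop:mincodim} shows that the induced action of $T^{r-1} \defeq T^r/S^1$ on $N^{n-2c}$ has finite kernel, so a torus of rank $r-1$ acts effectively by isometries on $N^{n-2c}$; moreover $N$ is totally geodesic, hence $\Ric_2(N^{n-2c}) > 0$. Since $n-2c$ is odd, Proposition \ref{prop:symrankbound} applied to $N$ gives $r-1 \le \tfrac{n-2c+1}{2}$, so $2c \le \tfrac{n+2}{4}$; consequently $\dim N = n-2c \ge \tfrac{3n-2}{4} \ge 11$ and $c \le \tfrac{n+2}{8}$.

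If $c = 1$, then $N^{n-2}$ has codimension $2$ in $M^n$, and Theorem \ref{result:connprinc} (with $k=2$, $d=2$, $G = S^1$, so $\delta(G)=1$) shows the inclusion $N^{n-2} \hookrightarrow M^n$ is $(n-3)$-connected. Since $M^n$ is closed, odd-dimensional and simply connected, and $N^{n-2}$ is a closed connected embedded codimension-$2$ submanifold, Corollary \ref{cor:homotopysphere} gives that $M^n$ is a homotopy sphere, hence homeomorphic to $S^n$.

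If $c \ge 2$, then $\dim N = n-2c$ is odd and lies in $[11,n-4] \subset [7,n)$, and $r-1 \ge \tfrac{3n+10}{8}-1 = \tfrac{3n+2}{8} \ge \tfrac{3(n-2c)+10}{8}$, the last step being $6c \ge 8$. By Theorem \ref{result:connprinc} the inclusion $N \hookrightarrow M$ is $(n-4c+1)$-connected, so $\pi_1(N) \cong \pi_1(M) = 0$; thus $N^{n-2c}$ satisfies all the hypotheses of the theorem in its dimension, and the induction hypothesis gives $N$ homeomorphic to $S^{n-2c}$. Feeding this back, the $(n-4c+1)$-connectedness of $N \hookrightarrow M$ together with $N$ being a homology sphere makes $M$ itself $(n-4c+1)$-connected; and since $n-2c-2(2c-1) = n-6c+2 \ge \tfrac{n+2}{4} > 0$, Lemma \ref{lem:periodic} applies with $l = 2c-1$ and shows $\cup e : H^i(M;\mathbb{Z}) \to H^{i+2c}(M;\mathbb{Z})$ is surjective for $2c-1 \le i < n-4c+1$. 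Since $H^i(M;\mathbb{Z}) = 0$ for $0 < i \le n-4c+1$, this forces $H^i(M;\mathbb{Z}) = 0$ also for $4c-1 \le i \le n-2c$, and because $8c \le n+2$ these two ranges join to give $H^i(M;\mathbb{Z}) = 0$ for all $0 < i \le n-2c$. Poincar\'e duality and the universal coefficient theorem then yield $H_i(M;\mathbb{Z}) = 0$ for $0 < i < n$, so $M^n$ is a simply connected integral homology sphere, hence a homotopy sphere, hence homeomorphic to $S^n$.

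The main obstacle is the combinatorics of the $c \ge 2$ case: one must simultaneously keep the reduced submanifold $N^{n-2c}$ carrying a torus of rank large enough ($\ge \tfrac{3(n-2c)+10}{8}$) for the induction hypothesis to fire, and keep the connectedness estimate of Theorem \ref{result:connprinc}, which decays like $n-4c+1$ as the codimension grows, strong enough that, after passing through the periodicity of Lemma \ref{lem:periodic}, it still forces $M$ to be a homology sphere. The inequality $\dim N \ge \tfrac{3n-2}{4}$ extracted from the symmetry rank bound on $N$ is precisely what reconciles these two competing demands and pins down the constant in the hypothesis $r \ge \tfrac{3n+10}{8}$.
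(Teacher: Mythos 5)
Your proposal is correct and follows essentially the same approach as the paper: the same induction on odd dimension with the same base range, the same choice of a minimal-codimension circle fixed-point component $N$, the same dimensional estimate $\dim N \ge \tfrac{3n-2}{4}$ extracted from the symmetry rank bound, and the same dichotomy on $\codim(N)$. The only difference is that in the $c\ge 2$ case you route through Lemma \ref{lem:periodic}, whereas the paper notes directly that $(n-4c+1)\ge \lceil n/2\rceil$-connectedness of $N\hookrightarrow M$ plus $N$ a sphere already kills $H_i(M)$ for $1\le i\le \tfrac{n-1}{2}$ and finishes by Poincar\'e duality; your detour is harmless but superfluous.
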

		
		Our approach is an adaptation of the one outlined in \cite[Exercises 8.4.15--17]{petersen}.
		For odd dimensions $7 \leq n \leq 13$, we have that $\frac{n+1}{2}=\lceil\frac{3n+10}{8}\rceil$. Therefore, by Theorem \ref{thm:oddmaxsymrank}, we have proven Theorem \ref{thm:3n+10/8} for dimensions $n$ satisfying $7 \leq n \leq 13$. 
		To prove Theorem \ref{thm:3n+10/8} for dimensions $n\geq 15$, we now establish the following:
		
		\begin{lemma}\label{lem:NRic2}
			Let $M^n$ be a closed $n$-manifold with $\Ric_2>0$ on which a torus $T^r$ acts isometrically and effectively. Among the connected components of fixed point sets of circle sub-actions on $M^n$, choose $N$ that is maximal under inclusion. If $n\geq 7$ and $r\geq\frac{3n+10}{8}$, then:
			\begin{enumerate}
				\item \label{item:1} $\dim(N)\geq \frac{3n-2}{4}$, and
				
				\item \label{item:2} either $\codim(N)=2$ or $\symrank(N)\geq \frac{3\dim(N)+10}{8}$.
			\end{enumerate}
		\end{lemma}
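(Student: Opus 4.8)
The plan is to first establish the single key fact that $N$ carries an effective isometric torus action of rank $r-1$, and then to deduce both items from the symmetry rank bound of Theorem~\ref{result:main} together with elementary arithmetic built on the hypothesis $r\geq\frac{3n+10}{8}$; no appeal to the connectedness principle is needed.

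First I would set up notation. Since $n\geq 7$ forces $r\geq\lceil\frac{3n+10}{8}\rceil\geq 4$, Proposition~\ref{result:torusfixedpt} guarantees that some circle subgroup of $T^r$ has nonempty fixed point set, so $N$ exists; fix a circle $S^1\subseteq T^r$ for which $N$ is a component of $M^{S^1}$, and write $d=\codim(N)$, which is even. The crucial claim is that the induced action of $T^{r-1}\defeq T^r/S^1$ on $N$ has finite kernel, so that $\symrank(N)\geq r-1\geq 3$. Indeed, if the kernel contained a circle, then pulling it back to $T^r$ would produce a $2$-torus fixing $N$ pointwise, and then the consequence of Proposition~\ref{prop:mincodim} recorded immediately after its statement would produce a circle subgroup whose fixed-point component containing $N$ has codimension strictly less than $d$, hence strictly contains $N$, contradicting the maximality of $N$. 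Next I would note that $\dim(N)\geq 3$: otherwise $N$ is a closed manifold of dimension at most $2$, whose isometry group has rank at most $2$, contradicting $\symrank(N)\geq 3$. Finally, since $N$ is a component of a fixed point set of an isometric action it is totally geodesic in $M^n$, so $\Ric_2(N)>0$ because $\dim(N)\geq 3$.

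For Item~\eqref{item:1}, applying the symmetry rank bound of Theorem~\ref{result:main} to $N$ gives $r-1\leq\symrank(N)\leq\lfloor\tfrac{\dim(N)+1}{2}\rfloor\leq\tfrac{(n-d)+1}{2}$, hence $r\leq\tfrac{n-d+3}{2}$; combining with $r\geq\tfrac{3n+10}{8}$ yields $3n+10\leq 4(n-d+3)$, i.e.\ $4d\leq n+2$, so that $\dim(N)=n-d\geq n-\tfrac{n+2}{4}=\tfrac{3n-2}{4}$. For Item~\eqref{item:2}, if $\codim(N)=2$ there is nothing to prove; otherwise $d$ is even and positive with $d\neq 2$, so $d\geq 4$ and $3d\geq 12\geq 8$, whence $\symrank(N)\geq r-1\geq\tfrac{3n+10}{8}-1=\tfrac{3n+2}{8}\geq\tfrac{3n-3d+10}{8}=\tfrac{3\dim(N)+10}{8}$.

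The main obstacle is the crucial claim above: one must verify carefully that the maximality of $N$ really does force the $T^{r-1}$-action on it to be virtually effective, and handle the associated bookkeeping ($d$ even, $\dim(N)\geq 3$, $r-1\geq 3$). Once that is in place, both items follow directly from the previously established symmetry rank bound, and the role of the numerical hypotheses $n\geq 7$ and $r\geq\frac{3n+10}{8}$ is precisely to ensure $r-1\geq 3$ and to close the two inequalities above.
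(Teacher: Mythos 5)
Your proof is correct and follows essentially the same route as the paper's: you use the maximality of $N$ together with Proposition~\ref{prop:mincodim} to show the $T^{r-1}$-action on $N$ has finite kernel, note $N$ is totally geodesic with $\Ric_2(N)>0$, and then apply the symmetry rank bound to close both items by arithmetic. The only cosmetic differences are that you argue Item~\eqref{item:2} directly rather than by contradiction, and you obtain $\dim(N)\geq 3$ via the symmetry rank of low-dimensional manifolds rather than the slightly stronger bound $\dim(N)\geq r-1$ used in the paper.
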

		
		\begin{proof}
			Notice because $n\geq 7$, we have $r\geq\lceil\frac{3n+10}{8}\rceil\geq 4$. Let $S^1\subset T^r$ be the circle subgroup that fixes $N$, and define $T^{r-1}\defeq T^r/S^1$. Because $N$ has minimal codimension, we have that the kernel of the $T^{r-1}$-action on $N$ is at most finite by Proposition \ref{prop:mincodim}. Thus $\dim(N)\geq r-1\geq 3$, and because $N$ is totally geodesic, $\Ric_2(N)>0$. Applying Proposition \ref{result:symrankbound} to $N$, we have 
			\[
				\tfrac{\dim(N)+1}{2}\geq \symrank(N)\geq r-1\geq \tfrac{3n+10}{8}-1,
			\]
			and Part \eqref{item:1} follows.
			
			To prove Part \eqref{item:2}, assume $\codim(N)\neq 2$ and $\symrank(N)< \frac{3\dim(N)+10}{8}$. Because the $T^r$ action on $M^n$ is effective and the codimension of $N$ must be even, we have that $\dim(N)\leq n-4$. Thus
			\[
				\symrank(N)<\tfrac{3\dim(N)+10}{8}\leq\tfrac{3(n-4)+10}{8}<\tfrac{3n+10}{8}-1\leq r-1.
			\]
			It follows that the kernel of the $T^{r-1}$-action on $N$ is at least 1-dimensional, which contradicts $N$ being chosen to have minimal codimension by Proposition~\ref{prop:mincodim}.
		\end{proof}
		
		We will now use Lemma \ref{lem:NRic2} and Theorem \ref{result:connprinc} to prove Theorem \ref{thm:3n+10/8} using the work of Smale in \cite{smale}.
		
		\begin{proof}[Proof of Theorem \ref{thm:3n+10/8}]
			We will prove Theorem \ref{thm:3n+10/8} by induction on $n$. 
			For the base case, notice that in off dimensions $n$ satisfying $7\leq n\leq13$, we have $\frac{n+1}{2}=\lceil\frac{3n+10}{8}\rceil$. 
			Thus, the base cases are established by Theorem \ref{thm:oddmaxsymrank}.			
			Suppose for the sake of induction that Theorem \ref{thm:3n+10/8} holds for odd dimensions up to $n-2$ for some $n\geq15$. We will show it also holds in dimension $n$.
			
			Let $M^n$ be a closed, simply connected, Riemannian manifold of odd-dimension $n\geq 15$ with $\Ric_2>0$, and suppose a torus $T^r$ of rank $r\geq\tfrac{3n+10}{8}$ acts effectively and by isometries on $M^n$.
			We will show that $M^n$ is homeomorphic to $S^n$.
			Because $r\geq \frac{3n+10}{8}>3$ when $n\geq 15$, it follows from Theorem \ref{result:torusfixedpt} that there are circle subgroups of $T^r$ whose fixed point sets on $M^n$ are non-empty. Among the connected components of fixed point sets of these circle sub-actions on $M^n$, choose $N$ that is maximal under inclusion. 
			
			By Lemma \ref{lem:NRic2}, $\dim(N)\geq\frac{3n-2}{4}$ and either $\codim(N)=2$ or $\symrank(N)\geq \frac{3\dim(N)+10}{8}$. If $\codim(N)=2$, then by Theorem \ref{result:connprinc} and Corollary \ref{cor:homotopysphere}, $M^n$ is homotopy equivalent to $S^n$, and the result follows from Smale's resolution to the Generalized Poincar\'e conjecture for dimensions $\geq 5$ in \cite{smale}. Suppose instead that $\codim(N)\geq 4$ and $\symrank(N)\geq \frac{3\dim(N)+10}{8}$. Then because $\dim(N)\geq \frac{3n-2}{4}$, we have
			\[
				n-2\codim(N)+1 \geq \tfrac{n}{2}
			\]
			Hence by Theorem \ref{result:connprinc}, the inclusion $N\hookrightarrow M$ is at least $\lceil\frac{n}{2}\rceil$-connected. Thus, because $M$ is simply connected, so is $N$. Because $n\geq 15$, we have $\dim(N)\geq\lceil\frac{3n-2}{4}\rceil\geq11$. Hence, the induction hypothesis implies that $N$ is homeomorphic to a sphere. Thus for $1\leq i\leq \frac{n-1}{2}$,  
			\[
				H_i(M^n)\cong H_i(N)\cong0.
			\]
			Applying Poincar\'e Duality, it follows that $H_i(M^n)\cong 0$ for $1\leq i\leq n-1$. Because $M^n$ is simply connected, it follows that $M^n$ is homotopy equivalent to a sphere, and again the result follows by the work of Smale in \cite{smale}.
		\end{proof}
			
	\section{$\mathbf{Ric_2>0}$ with large symmetry rank in even dimensions}\label{sec:even}
	
	In this section, we will study closed, simply connected, even-dimensional manifolds with $\Ric_2>0$ and large symmetry rank.
	In particular, we prove those with half-maximal symmetry rank have positive Euler characteristic (Theorem \ref{result:evenEuler}), we obtain a strong classification for those with maximal symmetry rank and bounded second Betti number (the even-dimensional case of Theorem \ref{result:main}), and we obtain a weaker classification of those with $3/4$-maximal symmetry rank and bounded second Betti number (Theorem \ref{result:even}).
	
%	Here, we first prove Theorem \ref{result:evenEuler} in Section \ref{sec:posEuler}, and we then prove Part \eqref{item:evenresult} of Theorem \ref{result:3/4maxsymrank} in Section \ref{sec:Hoddzero}.
	
	\subsection{Positive Euler characteristic}\label{sec:posEuler}
	
		First, we establish Theorem \ref{result:evenEuler}, which we restate here for convenience:
			
		\begin{theorem}\label{thm:Ric2evenEuler}
			Suppose $M^n$ is a closed Riemannian manifold of even dimension $n\geq 8$ with $\Ric_2>0$.  
			If a torus $T^r$ of rank $r\geq \frac{n}{4}+2$ acts effectively and by isometries on $M^n$, then $\chi(M^n)>0$.
		\end{theorem}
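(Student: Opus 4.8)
The plan is to use the classical identity $\chi(M^n)=\chi\big(M^{T^r}\big)$ for torus actions on closed manifolds and to induct on the even dimension $n$. Since $\Ric_2>0$ implies $\Ric>0$, the fundamental group is finite; as $\chi$ is multiplicative under finite covers and an isometric torus action lifts to one of the same rank on the universal cover (whose isometry group is compact), I may assume $M^n$ is simply connected. For the base cases $n\in\{8,10\}$, the bound $r\geq\tfrac n4+2$ forces $r\geq\lceil\tfrac{3n+6}8\rceil$, so Theorem \ref{result:even} applies and gives $H^{\mathrm{odd}}(M^n;\mathbb Z)=0$; hence $\chi(M^n)=\sum_i b_{2i}(M^n)\geq 1>0$. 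So I would assume $n\geq 12$ and that the theorem holds in all smaller even dimensions.

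Since $r\geq\tfrac n4+2\geq 5>3$, Proposition \ref{result:torusfixedpt} (with $k=2$) yields a circle $S^1\subseteq T^r$ with $M^{S^1}\neq\varnothing$. Among the components of fixed point sets of all circle subgroups of $T^r$, choose $N$ of minimal codimension $d$; then $d$ is even and $\geq 2$, $N$ is closed and totally geodesic (so $\Ric_2(N)>0$ when $\dim N\geq 3$), the residual torus $T^{r-1}\defeq T^r/S^1$ acts on $N$ with at most finite kernel by Proposition \ref{prop:mincodim}, and by Theorem \ref{result:connprinc} (with $G=S^1$, so $\delta(G)=1$) the inclusion $N\hookrightarrow M$ is $(n-2d+1)$-connected. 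If $d=2$, this inclusion is $(n-3)$-connected, so Lemma \ref{lem:periodic} together with Poincaré duality on $M^n$ forces every odd integral cohomology group of $M^n$ to vanish, whence $\chi(M^n)>0$ and we are done.

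Now suppose $d\geq 4$. Applying the symmetry-rank bound of Proposition \ref{result:symrankbound} to $N$ (which carries an effective $T^{r-1}$-action of rank $r-1\geq\tfrac n4+1$) forces $r-1\leq\tfrac{\dim N}2$, hence $d\leq\tfrac n2-2$ and $\dim N=n-d\geq\tfrac n2+2\geq 8$; moreover $\dim N$ is even and $r-1\geq\tfrac n4+1\geq\tfrac{n-d}4+2=\tfrac{\dim N}4+2$, so $N$ satisfies the hypotheses of the theorem in dimension $\dim N<n$. By the inductive hypothesis (or, when $\dim N\in\{8,10\}$, by Theorem \ref{result:even}), $\chi(N)>0$. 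It then remains to deduce $\chi(M^n)>0$ from this: writing $\chi(M^n)=\chi(M^{S^1})=\sum_i\chi(N_i)$ over the components of $M^{S^1}$ with $N=N_{i_0}$, the task reduces to $\sum_{i\neq i_0}\chi(N_i)\geq 0$. Each $N_i$ is a closed totally geodesic $\Ric_2>0$ submanifold disjoint from $N$, so a Frankel-type intersection estimate for $\Ric_2>0$ bounds $\dim N_i\leq n-\dim N=d$; together with minimality of $d$, effectiveness of the relevant stabilizing subtorus on the normal space of $N_i$, and Proposition \ref{result:symrankbound}, this pins $N_i$ down to a short list of low-dimensional possibilities (points, even-dimensional rational cohomology spheres, complex projective spaces, and forced products of such) on which $\chi\geq 0$, the cases with $\chi<0$ being excluded exactly as for $N$ using Theorem \ref{result:connprinc} and Poincaré duality. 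Summing, $\chi(M^n)>0$.

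The step I expect to be the main obstacle is this last one: simultaneously controlling the Euler characteristics of all the lower-codimension components $N_i$ of the circle fixed point set, not merely the dominant one. This is where a Frankel-type theorem for $\Ric_2>0$ must be brought in to bound the dimensions and number of these components, and where one must verify—by combining the connectedness principle (Theorem \ref{result:connprinc}) with the symmetry-rank bound (Proposition \ref{result:symrankbound})—that no such component can carry negative Euler characteristic, a check that also requires handling the excluded low dimensions directly (in particular ruling out an $S^3\times S^3$-type component, which a codimension-$2$-style connectedness estimate together with Poincaré duality does).
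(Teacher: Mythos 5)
Your architecture has two genuine problems, one structural and one substantive.

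\textbf{Circularity.} You invoke Theorem~\ref{result:even} both for the base cases $n\in\{8,10\}$ and to handle $\dim N\in\{8,10\}$ in the inductive step. But Theorem~\ref{result:even} is proved in the paper \emph{after} and \emph{via} Theorem~\ref{result:evenEuler}: the proof of Theorem~\ref{thm:even3/4maxsymrank} opens by citing Theorem~\ref{result:evenEuler} to get $\chi(M^n)>0$, and its base cases in turn rest on Theorem~\ref{thm:evenmaxsymrank}, whose proof (through Lemma~\ref{lem:evenmaxsymrank}) again quotes Theorem~\ref{thm:Ric2evenEuler}. So your base case presupposes the theorem you are trying to prove. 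A self-contained argument for the base dimensions is required.

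\textbf{The other components of $M^{S^1}$.} You correctly identify the crux yourself: after writing $\chi(M)=\chi(M^{S^1})=\sum_i\chi(N_i)$, you must show $\sum_{i\neq i_0}\chi(N_i)\geq 0$, and you propose to do so by bounding $\dim N_i\leq d$ via a Frankel-type argument and then ``pinning down'' the $N_i$ to a list of spaces with $\chi\geq 0$. This does not close up. Even granting a Frankel theorem for $\Ric_2>0$ (which the paper never states or needs), the dimension bound gives no information about the topology of $N_i$: for a component of small dimension, Theorem~\ref{result:connprinc} applied to the inclusion $N_i\hookrightarrow M^n$ has connectivity roughly $2\dim N_i-n+\delta$, which is negative or zero in the regime $\dim N_i\leq d\leq n/2-2$, so you get no cohomological control on $N_i$ from it. The residual torus acting on $N_i$ also need not be large relative to $\dim N_i$, so Proposition~\ref{result:symrankbound} gives no classification. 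There is simply no structure theorem that restricts an arbitrary small circle-fixed-point-set component of a $\Ric_2>0$ manifold to ``points, rational cohomology spheres, complex projective spaces, and forced products.''

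The paper avoids the whole problem with a different decomposition: rather than using $M^{S^1}$ and fighting over the stray components, it applies Proposition~\ref{result:torusfixedpt} to get a \emph{codimension-two} subtorus $T^{r-2}\subset T^r$ with nonempty fixed point set, uses Conner's identity to reduce to showing every component of $M^{T^{r-2}}$ has positive Euler characteristic, and then invokes the chain lemma (Lemma~\ref{lem:chainfixedptsets}, built from Proposition~\ref{prop:chainoftori}): the numeric constraint $r\geq n/4+2$ forces each component to be either a point or to sit inside a chain with a codimension-$2$ step, and in the latter case Theorem~\ref{result:connprinc} plus Lemma~\ref{lem:periodic} kill the odd Betti numbers of the chain member, and a second application of Conner carries positivity down to the $T^{r-2}$-component. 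This treats every component of the relevant fixed point set uniformly, so no ``leftover'' components arise, and the argument needs neither simple connectivity nor any of the later classification theorems.
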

		
		To prove Theorem \ref{thm:Ric2evenEuler}, we will use the following topological observation:
		
		\begin{proposition}\label{prop:chainoftori}
			Suppose a torus $T^r$ acts isometrically and effectively on a closed manifold $M$. If the fixed point set $M^{T^r}$ is non-empty, then given any point $x\in M^{T^r}$, there exists a chain of subgroups $T^1\subset T^2\subset \dots\subset T^{r-1}\subset T^r$ such that the following inclusions of components of fixed point sets containing $x$ are each of minimal, positive, even codimension:
			\[
			M^{T^r}_x\subset M^{T^{r-1}}_x\subset \dots\subset M^{T^2}_x\subset M^{T^1}_x\subset M.
			\]
		\end{proposition}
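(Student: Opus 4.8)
The plan is to induct on the rank $r$, combining the structure of the isotropy representation of $T^r$ at the fixed point $x$ with Proposition~\ref{prop:mincodim}; we may assume $M$ is connected. Two of the three requirements are automatic: the fixed-point set of an isometric torus action is a disjoint union of closed totally geodesic submanifolds, and at a fixed point the tangent space decomposes into the trivial summand plus a sum of $2$-dimensional weight spaces, so the component through $x$ of the fixed-point set of any subtorus has \emph{even} codimension in $M$; this codimension is \emph{positive} unless the subtorus acts trivially near $x$, which by connectedness and effectiveness happens only for the trivial subgroup. The same reasoning applies inside any totally geodesic $T^r$-invariant submanifold through $x$, so evenness and positivity persist along the entire chain.

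For $r=1$ the chain $M^{T^1}_x\subset M$ suffices. For $r\ge 2$, assume the result in rank $r-1$, and choose a circle $T^1\subset T^r$ minimizing $\codim_M M^{T^1}_x$; such a minimizer exists since this codimension depends only on which of the finitely many weight hyperplanes of $\mathfrak{t}^r$ contain $\mathrm{Lie}(T^1)$. Set $N\defeq M^{T^1}_x$. If the kernel of the $T^r$-action on $N$ contained a $2$-torus, then two linearly independent action fields vanishing on $N$ would, by Proposition~\ref{prop:mincodim}(2), admit a linear combination vanishing on a submanifold strictly larger than $N$, so a circle in the closure of the corresponding one-parameter subgroup would fix a submanifold through $x$ of smaller codimension in $M$ than $N$ --- contradicting the minimality of $T^1$. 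Hence this kernel is $1$-dimensional, so $T^{r-1}\defeq T^r/T^1$ acts on $N$ with finite kernel, and $\overline T$ (the quotient of $T^{r-1}$ by that finite kernel) is a rank-$(r-1)$ torus acting effectively and isometrically on $N$ with $x\in N^{\overline T}$; moreover $N\supsetneq M^{T^r}_x$ because $T^r$ acts nontrivially on $N$. Applying the induction hypothesis to $(N,\overline T,x)$ produces a chain $\overline T^1\subset\cdots\subset\overline T^{r-1}=\overline T$ realizing
\[
	N^{\overline T^{r-1}}_x\subset\cdots\subset N^{\overline T^1}_x\subset N
\]
with each inclusion of minimal positive even codimension. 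Pulling this chain back through $T^r\twoheadrightarrow\overline T$ gives $T^1\subset T^2\subset\cdots\subset T^r$, and since the $T^r$-action on $N$ factors through $\overline T$ one obtains $M^{T^{i+1}}_x=N^{\overline T^i}_x$ for $1\le i\le r-1$; thus $M^{T^r}_x=N^{\overline T}_x$ sits at the bottom and the successive codimensions coincide with those of the $\overline T$-chain inside $N$.

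The delicate point is the reduction to an \emph{effective} rank-$(r-1)$ action on $N$: eliminating a positive-dimensional kernel is exactly where Proposition~\ref{prop:mincodim} and the minimal choice of $T^1$ are needed, and one must then check that taking preimages and images under $T^r\twoheadrightarrow\overline T$ gives a bijection between rank-$(i+1)$ subtori of $T^r$ containing $T^i$ and rank-$i$ subtori of $\overline T$ containing $\overline T^{i-1}$, under which the relevant fixed-point components through $x$ --- and hence the codimensions being minimized --- agree, so that minimality is truly inherited from the induction and not merely asserted.
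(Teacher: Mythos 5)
Your proposal is correct and follows essentially the same route as the paper: at each stage choose a circle whose fixed-point component through $x$ has minimal codimension, and invoke Proposition~\ref{prop:mincodim}(2) to rule out a positive-dimensional kernel so the construction can continue. The only difference is presentational --- the paper iterates the choice directly inside $T^r$ and appeals to Proposition~\ref{prop:mincodim} at each step to guarantee the quotient torus acts almost effectively, while you recast the same step as an explicit induction on rank, passing to an honestly effective torus $\overline T$ on $N$ and tracking the bijection between subtori of $T^r$ containing $T^1$ and subtori of $\overline T$; both handle the minimality, positivity, and evenness of the codimensions in the same way.
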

		
		\begin{proof}
			Fix a point $x\in M^{T^r}$ and choose a circle subgroup $S_1^1\subset T^r$ such that the fixed point set component $M_{x}^{S_1^1}$ has minimal codimension in $M$. Because $T^r$ acts effectively on $M$, we have $M_{x}^{S_1^1}\neq M$. Set $T^1\defeq S_1^1$. Now choose a circle subgroup $S_2^1\subset T^r/T^{1}$ such that the fixed point set component $M_{x}^{T^2}$ for $T^2\defeq S_2^1\times T^1$ has minimal codimension in $M_{x}^{T^{1}}$. Because $M_{x}^{T^{1}}$ was chosen to have minimal codimension in $M$, $T^r/T^{1}$ must act almost effectively on $M_{x}^{T^{1}}$ by Proposition \ref{prop:mincodim}. In particular, $S_2^1$ does not fix  all of $M_{x}^{T^1}$, and hence $M_{x}^{T^2}\neq M_{x}^{T^1}$. 
			
			Now for $i\in\{2,\dots,r-1\}$, we inductively choose $S_{i+1}^1\subset T^r/T^{i}$ such that the fixed point set component $M_{x}^{T^{i+1}}$ for $T^{i+1}\defeq S_{i+1}^1\times T^i$ has minimal codimension in $M_{x}^{T^i}$. Because $M_{x}^{T^{i}}$ was chosen to have minimal codimension in $M_{x}^{T^{i-1}}$, again $T^r/T^{i}$ must act almost effectively on $M_{x}^{T^{i}}$ by Proposition \ref{prop:mincodim}. This shows that $S_{i+1}^1\subset T^r/T^i$ does not fix all of $M_{x}^{T^i}$, and therefore $M_{x}^{T^{i+1}}\neq M_{x}^{T^{i}}$ for $i\in\{2,\dots,r-1\}$. 
		\end{proof}
		
		To prove Theorem \ref{thm:Ric2evenEuler}, we will apply Proposition \ref{result:torusfixedpt} for a $T^r$-action on an $n$-manifold of $\Ric_2>0$ with $r\geq \frac{n}{4}+2$ to obtain a fixed point for some $T^{r-2}$-subaction. Applying Proposition \ref{prop:chainoftori} will then give us the following topological restriction:
		
		\begin{lemma}\label{lem:chainfixedptsets}
			Suppose $M^n$ is closed and even-dimensional, and a torus $T^r$ acts isometrically and effectively on $M^n$ with $r\geq\frac{n}{4}+2$. Assume a subgroup $T^{r-2}\subset T^r$ has non-empty fixed point set $M^{T^{r-2}}$ in $M$, and for any point $x\in M^{T^{r-2}}$, consider a chain of fixed point set components $M^{T^{r-2}}_x\subset M^{T^{r-3}}_x\subset \dots\subset M^{T^2}_x\subset M^{T^1}_x\subset M$ guaranteed by Proposition \ref{prop:chainoftori}. Then either 
			\begin{enumerate}
				\item $\dim(M_x^{T^{r-2}})=0$, or 
				
				\item at least one of the inclusions in the chain $M^{T^{r-2}}_x\subset M^{T^{r-3}}_x\subset \dots\subset M^{T^2}_x\subset M^{T^1}_x\subset M$ is of codimension 2.
			\end{enumerate}
		\end{lemma}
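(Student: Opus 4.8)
The plan is to argue by contradiction via a dimension count along the chain of fixed point sets produced by Proposition \ref{prop:chainoftori}. First I would check that Proposition \ref{prop:chainoftori} applies to the $T^{r-2}$-action on $M$: the subtorus $T^{r-2}\subset T^r$ acts effectively since $T^r$ does, and its fixed point set is non-empty by hypothesis. Applied at a point $x\in M^{T^{r-2}}$, it furnishes a chain of subgroups $T^1\subset T^2\subset\cdots\subset T^{r-2}$ together with the chain of components
\[
M^{T^{r-2}}_x\subset M^{T^{r-3}}_x\subset\cdots\subset M^{T^1}_x\subset M,
\]
consisting of exactly $r-2$ inclusions, each of minimal, positive, and even codimension in the next term.

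Next I would suppose, toward a contradiction, that $\dim(M^{T^{r-2}}_x)\neq 0$ and that none of these $r-2$ inclusions has codimension $2$. Since each codimension is positive and even, not being equal to $2$ forces it to be at least $4$, so passing down the chain accounts for a total drop in dimension of at least $4(r-2)$. Moreover, because $n$ is even and every codimension in the chain is even, $\dim(M^{T^{r-2}}_x)=n-\sum(\text{codimensions})$ is even; being non-zero, it is therefore at least $2$. Adding these contributions gives $n\geq 4(r-2)+2=4r-6$, i.e. $r\leq\tfrac{n+6}{4}$, which contradicts the standing hypothesis $r\geq\tfrac{n}{4}+2=\tfrac{n+8}{4}$. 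Hence either $\dim(M^{T^{r-2}}_x)=0$ or some inclusion in the chain has codimension $2$, which is precisely the dichotomy asserted.

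The argument is short and elementary, so I do not anticipate a real obstacle; the only points that require attention are confirming that the chain from Proposition \ref{prop:chainoftori} has exactly $r-2$ inclusions (one fewer than the number of terms in the chain) and that "positive, even, and not equal to $2$" genuinely means "at least $4$" — both immediate once made explicit — together with the observation that the parity of $n$ and of the codimensions forces the bottom term to have even dimension.
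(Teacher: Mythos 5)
Your proof is correct and follows essentially the same route as the paper's: both argue by contradiction, note that each of the $r-2$ codimensions in the chain must be at least $4$ if none equals $2$, observe that $\dim(M^{T^{r-2}}_x)$ is even and hence at least $2$, and derive $n\geq 4(r-2)+2$, contradicting $r\geq\tfrac{n}{4}+2$.
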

		
		\begin{proof}
			Suppose $\dim(M_x^{T^{r-2}})\neq0$. Let $d_i$ denote the codimension of $M_x^{T^i}$ in $M_x^{T^{i-1}}$ for $i\in\{2,\dots,r-2\}$, and let $d_1$ denote the codimension of $M_x^{T^1}$ in $M$. Because fixed point sets of torus actions have even codimension, it follows that $\dim(M_x^{T^{r-2}})\geq 2$ and each $d_i$ is even. Because the inclusions in the chain are proper, it follows that $d_i\geq 2$ for all $i$. Now if $d_i\geq 4$ for all $i$, then because $\dim(M_x^{T^{r-2}})\geq 2$, we have
			\begin{align*}
				n=\dim(M^n)&\geq \dim(M_x^{T^{r-2}})+d_{r-2}+d_{r-3}+\dots+d_2+d_1\\
				&\geq 2+4(r-2).
			\end{align*}
			It then follows that $r < \frac{n}{4}+2$, which contradicts the assumption that $r\geq\frac{n}{4}+2$. Therefore, $d_i=2$ for some $i\in\{1,\dots,r-2\}$.
		\end{proof}
		
		Finally, we recall the following topological result established by Conner:
		
		\begin{lemma}[\cite{Conner}]\label{lem:conner}
			If $T$ is a torus acting on a manifold $M$ with fixed point set $M^T$, then  $\chi(M)=\chi(M^T)$.
		\end{lemma}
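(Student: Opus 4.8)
The plan is to reduce to the case of a circle action and then combine the Lefschetz fixed point theorem with a Mayer--Vietoris decomposition along the fixed point set. Write $T=T^k$ and induct on $k$, the case $k=0$ being vacuous. Fix a codimension-one subtorus $T^{k-1}\subset T^k$. By the slice theorem for smooth actions of compact Lie groups, $M^{T^{k-1}}$ is a disjoint union of closed submanifolds of $M$, and the circle $S^1\defeq T^k/T^{k-1}$ acts smoothly on $M^{T^{k-1}}$ with $\bigl(M^{T^{k-1}}\bigr)^{S^1}=M^{T^k}$. Hence, once the circle case is established, $\chi(M)=\chi\bigl(M^{T^{k-1}}\bigr)=\chi\bigl(M^{T^k}\bigr)$ by the inductive hypothesis and the circle case, which would finish the proof.

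For the circle case I would first record the sub-lemma that a smooth $S^1$-action on a closed manifold $W$ with $W^{S^1}=\emptyset$ satisfies $\chi(W)=0$. Indeed, choosing $\theta\in S^1$ of infinite order, the fixed set $\mathrm{Fix}(\theta)$ is closed and invariant under the dense subgroup generated by $\theta$, hence equals $W^{S^1}=\emptyset$; since $\theta$ is isotopic to $\mathrm{id}_W$ through the $S^1$-action, $\theta^*$ is the identity on $H^*(W;\mathbb{Q})$, so the Lefschetz number $L(\theta)$ equals $\chi(W)$; but $\mathrm{Fix}(\theta)=\emptyset$ forces $L(\theta)=0$.

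Now let $F\defeq M^{S^1}$ and let $U$ be an $S^1$-invariant tubular (disk-bundle) neighborhood of $F$, so that $\chi(U)=\chi(F)$ because $F$ is a deformation retract of $U$. Its boundary $\partial U$ is the unit sphere bundle of the normal bundle $\nu F$; since the slice representation on $\nu F$ contains no trivial summand (otherwise $F$ would fail to be all of $M^{S^1}$ near some point), $S^1$ acts on $\partial U$ without fixed points, whence $\chi(\partial U)=0$ by the sub-lemma. Likewise $S^1$ acts without fixed points on $W\defeq\overline{M\setminus U}$, hence on the closed double $DW\defeq W\cup_{\partial U}W$, so $0=\chi(DW)=2\chi(W)-\chi(\partial U)$ and $\chi(W)=0$. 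Finally $U\cap W$ deformation retracts onto $\partial U$, so Mayer--Vietoris gives $\chi(M)=\chi(U)+\chi(W)-\chi(U\cap W)=\chi(F)=\chi(M^{S^1})$.

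The argument is largely bookkeeping once the Lefschetz fixed point theorem on closed manifolds and the equivariant tubular neighborhood theorem are granted; the one point that needs genuine care is checking that every induced $S^1$-action above (on $\partial U$, on $W$, and on $M^{T^{k-1}}$ in the reduction) is actually fixed-point-free, which is precisely where one uses both that $F$ is the \emph{full} $S^1$-fixed set and that the normal slice representation has no trivial summand. As an alternative to the last paragraph, one may instead apply the Lefschetz--Hopf theorem directly to $\theta$ acting on $M$, whose fixed set is the clean submanifold $M^{S^1}$: each component $C$ contributes $\mathrm{sgn}\det\bigl(I-d\theta|_{\nu C}\bigr)\cdot\chi(C)$, and since $d\theta|_{\nu C}$ is a block sum of planar rotations by nonzero angles one has $\det\bigl(I-d\theta|_{\nu C}\bigr)=\prod_i\bigl(2-2\cos\alpha_i\bigr)>0$, so the contribution is $+\chi(C)$ and the total equals $\chi(M^{S^1})$.
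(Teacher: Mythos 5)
The paper does not prove Lemma \ref{lem:conner} itself---it cites Conner for it---so there is no in-paper argument to compare against. Your proposal is a correct, self-contained proof for smooth torus actions on closed manifolds, which is exactly the setting in which the lemma is used here, and it follows the classical Lefschetz fixed point route (reduce to the circle via induction, then compare $L(\theta)=\chi(M)$ for a topological generator $\theta$ with the count over $\mathrm{Fix}(\theta)=M^{S^1}$). This is essentially the standard approach rather than a new one.

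One small imprecision is worth tightening in the sub-lemma: the clause ``closed and invariant under the dense subgroup generated by $\theta$'' only shows that $\mathrm{Fix}(\theta)$ is an $S^1$-invariant closed subset of $W$; it does not by itself identify it with $W^{S^1}$. The correct reasoning is that any $x\in\mathrm{Fix}(\theta)$ is fixed by every power of $\theta$, so the isotropy group $S^1_x$ is a closed subgroup of $S^1$ containing the dense cyclic group $\langle\theta\rangle$, hence $S^1_x=S^1$ and $x\in W^{S^1}$. (The reverse inclusion is trivial.) You should also make the compactness of $M$ explicit: the lemma as stated omits ``closed,'' but your argument, and the paper's use of the lemma, both require it.

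The alternative you sketch in the final sentences is in fact the cleanest route and makes the double/Mayer--Vietoris bookkeeping unnecessary: with an invariant metric, $M^{S^1}$ is a disjoint union of totally geodesic nondegenerate fixed submanifolds of $\theta$, and since $\theta$ has infinite order and the normal weights are all nonzero, $d\theta|_{\nu C}$ is a sum of rotations by nonzero angles, so $\det\bigl(I-d\theta|_{\nu C}\bigr)>0$ on every component $C$ and $L(\theta)=\sum_C\chi(C)=\chi\bigl(M^{S^1}\bigr)$.
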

		
		We are now ready to prove Theorem \ref{thm:Ric2evenEuler}.
		
		\begin{proof}[Proof of Theorem \ref{thm:Ric2evenEuler}]
			Suppose $M^n$ is closed, even-dimensional, has $\Ric_2>0$, and $T^r$ acts isometrically and effectively on $M^n$ with $r\geq\frac{n}{4}+2$. 
			Because $\Ric_2(M^n)>0$, Proposition \ref{result:symrankbound} states that $r\leq \frac{n}{2}$. 
			For even dimensions $n$, the inequalities $\frac{n}{4}+2\leq r\leq\frac{n}{2}$ are only consistent if $n\geq 8$. 
			In this case, $r\geq 4$. 
			Thus by Proposition \ref{result:torusfixedpt}, there exists a subgroup $T^{r-2}\subset T^r$ such that the fixed point set $M^{T^{r-2}}$ in $M$ is non-empty. By Lemma \ref{lem:conner}, it suffices to show that $\chi(M^{T^{r-2}})>0$. In particular, we will show that every connected component $M_x^{T^{r-2}}$ has positive Euler characteristic. 
			
			Choose an arbitrary point $x\in M^{T^{r-2}}$. By Proposition \ref{prop:chainoftori}, there exists a chain of subgroups $T^1\subset T^2\subset \dots\subset T^{r-1}\subset T^r$ such that the inclusions of fixed point set components containing $x$, $M^{T^r}_x\subset M^{T^{r-1}}_x\subset \dots\subset M^{T^2}_x\subset M^{T^1}_x\subset M$, are proper. 
			By Lemma \ref{lem:chainfixedptsets}, either $\dim(M_x^{T^{r-2}})=0$, or at least one of the inclusions in the chain $M^{T^{r-2}}_x\subset M^{T^{r-3}}_x\subset \dots\subset M^{T^2}_x\subset M^{T^1}_x\subset M$ is of codimension 2. 
			If $\dim(M_x^{T^{r-2}})=0$, then $\chi(M_x^{T^{r-2}})>0$, and we are done. 
			
			Now assume $\dim(M_x^{T^{r-2}})\geq 2$ and one of the inclusions  $M^{T^i}_x\subset M^{T^{i-1}}_x$ for $i\in\{1,\dots,r-2\}$ is of codimension 2.
			Here, we are using the convection that $T^0$ is the trivial subgroup and $M_x^{T^0}=M$. 
			We will show that $\chi(M^{T^{i-1}})>0$. 
			Let $m=\dim(M^{T^{i-1}}_x)$. Because $\dim(M_x^{T^{r-2}})\geq 2$, we have $m=2+\dim(M^{T^{i}})\geq 4$. So because $M^{T^{i-1}}_x$ is totally geodesic in $M$, we have $\Ric_2(M^{T^{i-1}}_x)>0$. Hence the Betti numbers for $b_1(M^{T^{i-1}}_x)$ and $b_{m-1}(M^{T^{i-1}}_x)$ are both zero. 
			So if $M^{T^{i-1}}$ is $4$-dimensional, $\chi(M^{T^{i-1}})>0$. 
			
			Suppose now that $M^{T^{i-1}}$ has dimension $m\geq 6$.
			By Theorem \ref{result:connprinc}, because $M^{T^i}_x$ is fixed by the $S^1\cong T^i/T^{i-1}$-action on $M^{T^{i-1}}_x$, the inclusion $M^{T^i}_x\hookrightarrow M^{T^{i-1}}_x$ is $(m-3)$-connected. 
			Thus by Lemma \ref{lem:periodic}, we have homomorphisms $H^i(M^{T^{i-1}}_x;\mathbb{Z})\to H^{i+2}(M^{T^{i-1}}_x;\mathbb{Z})$ which are surjective for $1\leq i< m-3$ and injective for $1<i\leq m-3$.
			The hypothesis that $n-d-2l>0$ in Lemma \ref{lem:periodic} is satisfied because $m\geq 6$, $d=2$, and $l=1$ in this case.
			Therefore, it follows that all of the odd Betti numbers of $M^{T^{i-1}}_x$ are zero, which implies $\chi(M^{T^{i-1}}_x)>0$. 
			
			Now for all dimensions $m\geq 4$, because $M^{T^{i-1}}_x$ is invariant under the $T^{r-2}$-action and $(M^{T^{i-1}}_x)^{T^{r-2}}=M^{T^{r-2}}_x$, it follows from Lemma \ref{lem:conner} that $\chi(M^{T^{r-2}}_x)>0$.
			Hence, we have shown that for all $x\in M^{T^{r-2}}$, the component $M_x^{T^{r-2}}$ containing $x$ has $\chi(M_x^{T^{r-2}})>0$. Therefore, $\chi(M^{T^{r-2}})>0$ and by Lemma \ref{lem:conner}, $\chi(M)>0$.
		\end{proof}
		
	\subsection{Classification for maximal symmetry rank}\label{sec:evenmaxsymrank}
		
%		Now, we will prove Part \eqref{item:evenresult} of Theorem \ref{result:3/4maxsymrank}.
		
		We will now prove the even dimensional case of Theorem \ref{result:main}.
		Specifically, we must prove if $M^n$ is closed, simply connected, even-dimensional, $n\neq 6$, $b_2(M^n)\leq 1$, $\Ric_2(M^n,g)>0$, and $\symrank(M^n,g) = \frac{n}{2}$, then $M^n$ is either diffeomorphic to $S^n$ or homeomorphic to $\mathbb{C}\mathrm{P}^{n/2}$.
		
		First, we note that the dimension $n=4$ case of Theorem \ref{result:main} follows from purely topological considerations, not relying on the curvature assumption.
		Orlik and Raymond prove in \cite{OrlikRaymond} that any closed, simply connected $4$-manifold $M^4$ with an effective $T^2$-action is equivariantly diffeomorphic to a connected sum of finitely many copies of $S^4$, $\pm\mathbb{C}\mathrm{P}^2$, or $S^2\times S^2$.
		Now if $b_2(M^4)\leq1$, then $2\leq \chi(M^4) \leq 3$.
		Thus, we have the following:
		
		\begin{corollary}\label{cor:dim4}
			Suppose $M^4$ is a closed, simply connected, $4$-dimensional manifold with a smooth, effective $T^2$-action.
			If $b_2(M^4)\leq 1$, then $M^4$ is equivariantly diffeomorphic to either $S^4$ or $\mathbb{C}\mathrm{P}^{2}$.
		\end{corollary}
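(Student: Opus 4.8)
The plan is to invoke the equivariant classification of closed, simply connected $4$-manifolds admitting effective $T^2$-actions, due to Orlik and Raymond \cite{OrlikRaymond}, and then use the hypothesis $b_2(M^4)\leq 1$ to cut the number of connected summands down to at most one.

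First I would recall from \cite{OrlikRaymond} that $M^4$ is equivariantly diffeomorphic to an equivariant connected sum $X_1\# X_2\#\cdots\# X_m$, where each $X_i$ is one of $S^4$, $\mathbb{C}\mathrm{P}^2$, $\overline{\mathbb{C}\mathrm{P}^2}$ (that is, $\mathbb{C}\mathrm{P}^2$ with the opposite orientation), or $S^2\times S^2$, each carrying a standard linear $T^2$-action. Since connect-summing with $S^4$ changes neither the manifold nor the action, I may discard the $S^4$ summands; then either $M^4$ is equivariantly diffeomorphic to $S^4$, or every remaining $X_i$ lies in $\{\mathbb{C}\mathrm{P}^2,\overline{\mathbb{C}\mathrm{P}^2},S^2\times S^2\}$.

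In the latter case I would use the additivity of the second Betti number under connected sums of simply connected $4$-manifolds, $b_2(X_1\#\cdots\# X_m)=\sum_{i=1}^m b_2(X_i)$ (equivalently $\chi(M^4)=2+\sum_{i=1}^m(\chi(X_i)-2)$, which yields $2\leq\chi(M^4)\leq 3$ precisely when $b_2(M^4)\leq 1$). Because $b_2(\mathbb{C}\mathrm{P}^2)=b_2(\overline{\mathbb{C}\mathrm{P}^2})=1$ and $b_2(S^2\times S^2)=2$, the assumption $b_2(M^4)\leq 1$ forces $m=1$ and $X_1\in\{\mathbb{C}\mathrm{P}^2,\overline{\mathbb{C}\mathrm{P}^2}\}$. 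As $\overline{\mathbb{C}\mathrm{P}^2}$ is diffeomorphic to $\mathbb{C}\mathrm{P}^2$ as a smooth manifold (ignoring orientation), $M^4$ is then equivariantly diffeomorphic to $\mathbb{C}\mathrm{P}^2$ with a standard linear $T^2$-action.

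I do not expect a genuine obstacle here: the substantive input is entirely the Orlik--Raymond classification, and the rest is the elementary Betti-number bookkeeping above. The only point requiring a little care is the adjective ``equivariantly'' in the conclusion, which is justified by the fact that the decomposition of \cite{OrlikRaymond} is equivariant and that the $T^2$-actions on $S^4$ and on $\mathbb{C}\mathrm{P}^2$ occurring in that decomposition are unique up to equivariant diffeomorphism (after composing with an automorphism of $T^2$ if necessary).
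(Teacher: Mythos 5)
Your approach is the same as the paper's: invoke the Orlik--Raymond equivariant classification of closed, simply connected $4$-manifolds with effective $T^2$-actions as connected sums of $S^4$, $\pm\mathbb{C}\mathrm{P}^2$, and $S^2\times S^2$, then use $b_2(M^4)\leq 1$ (equivalently $2\leq\chi(M^4)\leq 3$) to force at most one nontrivial summand, which must be $\pm\mathbb{C}\mathrm{P}^2$. The paper states this argument in the two sentences immediately preceding the corollary rather than as a formal proof, but it is the identical reasoning.
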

		
		Sha and Yang proved in \cite{ShaYang} that any connected sum of finitely many copies of $S^4$, $\pm\mathbb{C}\mathrm{P}^2$, or $S^2\times S^2$ admits a metric of positive Ricci curvature. 
%		Baza\u{i}kin and Matvienko claimed to have proven in \cite{BazaikinMatvienko} that these manifold also admit Ricci positive metrics that are invariant under the corresponding effective $T^2$-actions. However, there is a gap in their proof.
		This leads naturally to the following:
		
		\begin{question}
			Are there any closed, simply connected $4$-manifolds with $b_2\geq 2$ which admit metrics of $\Ric_2>0$ that are invariant under an effective $T^2$-action?
		\end{question}
		
		\begin{example}
			Recall from Example \ref{ex:S3xS3} that $S^2\times S^2$ admits a metric with $\Ric_2>0$ and symmetry rank $1$. 
			Hsiang and Kleiner prove in \cite{HsiangKleiner} that any closed, orientable, $4$-dimensional manifold with positive sectional curvature that has a nontrivial Killing field must be homeomorphic to $S^4$ or $\mathbb{C}\mathrm{P}^2$.
			Consequently, it is impossible for $S^2\times S^2$ to admit a metric of positive sectional curvature with symmetry rank $1$.
			It remains to be seen whether $S^2\times S^2$ can admit a metric with $\Ric_2>0$ and symmetry rank $2$.
		\end{example}
		
		Now we will establish Theorem \ref{result:main} for dimensions $n\geq 8$.
		Namely, we intend to prove the following:
		
		\begin{theorem}\label{thm:evenmaxsymrank}
			Let $M^n$ be a closed, simply connected Riemannian manifold of even dimension $n\geq 8$ with $\Ric_2>0$. 
			Suppose a torus $T^r$ of rank $r=\frac{n}{2}$ acts effectively and by isometries on $M^n$.
			%			Assume $\Ric_2(M^n)>0$ and $M^n$ admits an effective, isometric circle action whose fixed point set has a connected component of codimension $2$.
			\begin{enumerate}
				\item If $b_2(M^n)=0$, then $M^n$ is diffeomorphic to $S^n$.
				
				\item If $b_2(M^n)=1$, then $M^n$ is homeomorphic to $\mathbb{C}\mathrm{P}^{n/2}$. \label{item:CPn}
			\end{enumerate}
		\end{theorem}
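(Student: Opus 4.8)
\emph{Overall strategy and Step 1.} The plan is to mirror the odd-dimensional case (Theorem~\ref{thm:oddmaxsymrank}): use the action to produce a totally geodesic, highly connected codimension-$2$ submanifold, compute $H^*(M^n;\mathbb{Z})$ from Poincar\'e duality, and then invoke topological classification results. Since Corollary~\ref{cor:dim4} handles $n=4$ and the $b_2=1$ argument below is an induction over even $n$, I would run the induction starting from $n=4$, the dimension $n=6$ entering as an inductive step (available for $b_2=1$, though not asserted in Theorem~\ref{result:main}, precisely because the obstruction in dimension $6$, the metric on $S^3\times S^3$, has $b_2=0$). First, since $r=\tfrac{n}{2}\ge 4\ge k+1$ with $k=2$, Proposition~\ref{result:torusfixedpt} provides circle subgroups of $T^r$ with nonempty fixed point sets; among all components of such fixed point sets choose $N$ of minimal codimension, fixed by $S^1\subset T^r$. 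Its codimension is even and positive, and by Proposition~\ref{prop:mincodim} the $T^r/S^1$-action on $N$ has finite kernel, so $\dim N\ge r-1=\tfrac{n}{2}-1$ and hence $\dim N\ge 4$. As $N$ is totally geodesic, $\Ric_2(N)>0$, so Proposition~\ref{result:symrankbound} gives $\tfrac{n}{2}-1\le\symrank(N)\le\tfrac{\dim N}{2}$, forcing $\dim N\ge n-2$; effectiveness of the $T^r$-action then yields $\codim N=2$. Dividing $T^r/S^1$ by its finite kernel, $N^{n-2}$ inherits an effective isometric $T^{(n-2)/2}$-action, again of maximal symmetry rank.

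\emph{Step 2 (cohomology).} By Theorem~\ref{result:connprinc} with $d=k=2$ and $\delta(S^1)=1$, the inclusion $N^{n-2}\hookrightarrow M^n$ is $(n-3)$-connected, so $N$ is simply connected and $b_2(N)=b_2(M)$. Let $e\in H^2(M;\mathbb{Z})$ be the Poincar\'e dual of $[N]$. By Lemma~\ref{lem:periodic} (applicable since $n-4>0$), cup product with $e$ is onto $H^{i+2}$ from $H^i$ for $1\le i<n-3$ and injective for $1<i\le n-3$. Combined with simple connectivity (which kills all odd cohomology, starting from $H^1=0$ and using Poincar\'e duality at the top) and $b_2(M)\le 1$, this gives: if $b_2(M)=0$ then $M$ is an integral homology $n$-sphere; if $b_2(M)=1$ then $H^{2j}(M;\mathbb{Z})\cong\mathbb{Z}$ for $0\le j\le\tfrac{n}{2}$ and odd cohomology vanishes. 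In the latter case the ring structure is forced: $\cup e\colon H^2\to H^4$ is an isomorphism (as $2<n-3$), so writing $e=c\,t$ for a generator $t$ of $H^2$, the class $e\cup t=c\,t^2$ generates $H^4\cong\mathbb{Z}$; hence $c=1$, $t^2$ generates $H^4$, iterating shows $t^j$ generates $H^{2j}$ for $j\le\tfrac{n}{2}-1$, and the perfect pairing $H^2\times H^{n-2}\to H^n$ shows $t^{n/2}$ generates $H^n$. Thus $M$ has the integral cohomology ring of $\mathbb{C}\mathrm{P}^{n/2}$ and, being simply connected, is homotopy equivalent to $\mathbb{C}\mathrm{P}^{n/2}$; moreover $e$ generates $H^2(M)$, so the normal Euler class of $N$ in $M$ is primitive.

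\emph{Step 3 (conclusion).} If $b_2(M)=0$: $M$ is a simply connected integral homology sphere, hence a homotopy sphere, so the component $N^{n-2}$ --- a simply connected integral homology $(n-2)$-sphere which is the entire $S^1$-fixed point set (fixed point sets of circle actions on homotopy spheres are integral cohomology spheres, cf.\ \cite{Borel}) --- is itself a homotopy sphere, and Montgomery--Yang (Lemma~\ref{lem:MontgomeryYang}) gives that $M$ is diffeomorphic to $S^n$. If $b_2(M)=1$: then $b_2(N)=1$, and $N^{n-2}$ (simply connected, $\Ric_2>0$, maximal symmetry rank, dimension $n-2$) is homeomorphic to $\mathbb{C}\mathrm{P}^{(n-2)/2}$ by the inductive hypothesis when $n\ge 8$, and when $n=6$ directly from Freedman's classification of simply connected closed $4$-manifolds. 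With $N^{n-2}$ homeomorphic to $\mathbb{C}\mathrm{P}^{(n-2)/2}$, included $(n-3)$-connectedly with primitive normal Euler class and fixed by $S^1$, I would then conclude via the topological results of Fang--Rong \cite{FangRong2004} (the even-dimensional analogue of Corollary~\ref{cor:homotopysphere}): concretely, the Gysin sequence identifies the boundary of a tubular neighborhood of $N$ as a homotopy $(n-1)$-sphere, the $h$-cobordism theorem recognizes the complement as a disk, and reassembling a disk bundle over $\mathbb{C}\mathrm{P}^{(n-2)/2}$ with primitive Euler class to a ball gives $M^n$ homeomorphic to $\mathbb{C}\mathrm{P}^{n/2}$.

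\emph{Main obstacle.} The delicate point is the homeomorphism statement in the $b_2=1$ case. Unlike homotopy spheres, a homotopy $\mathbb{C}\mathrm{P}^m$ need not be homeomorphic to $\mathbb{C}\mathrm{P}^m$, so merely identifying the homotopy type of $M$ is insufficient; the circle action and its codimension-$2$ fixed submanifold must be used essentially, and one must carry out (or cite from \cite{FangRong2004}, \cite{MontgomeryYang}) the handle/surgery-theoretic reassembly above. In particular, the two ``ends'' of the periodicity chain in Step 2 are not controlled directly by Lemma~\ref{lem:periodic}, so the argument that $e$ generates $H^2$ (equivalently, that the normal Euler class is primitive) must be extracted from the isomorphism $\cup e\colon H^2\to H^4$ together with Poincar\'e duality, as indicated; this primitivity is exactly what makes the reassembly produce the \emph{standard} $\mathbb{C}\mathrm{P}^{n/2}$.
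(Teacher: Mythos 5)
Your approach is essentially the paper's: find a codimension-$2$ fixed point set $N^{n-2}$, apply the connectedness principle (Theorem~\ref{result:connprinc}), pin down the cohomology ring of $M^n$ via Lemma~\ref{lem:periodic}, and then invoke Montgomery--Yang for the sphere and Fang--Rong for the projective space. For $b_2(M^n)=0$ your argument is clean and needs no induction at all, and in that case it matches the paper's proof. The difference is organizational: you propose a one-step descent with an induction on $n$, while the paper (Lemma~\ref{lem:evenmaxsymrank}) uses $\chi(M^n)>0$ from Theorem~\ref{thm:Ric2evenEuler} together with Proposition~\ref{prop:chainoftori} to build, inside $M^n$, a full chain $M^4\subset M^6\subset\cdots\subset M^n$ of totally geodesic codimension-$2$ submanifolds starting from a $T^r$-fixed point, and then climbs back up from $M^4$.

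This organizational choice is exactly where your proposal has a gap in the $b_2=1$ case. Your inductive hypothesis is applied to $N^{n-2}$, so for $n=8$ you need the classification of $N^6$; either you must extend the theorem to dimension $6$ (with $b_2=1$) as you suggest, or you must classify $N^6$ directly. Either way you need to produce a codimension-$2$ submanifold $N^4\subset N^6$. But your Step~1 only gives $\dim N\ge r-1$, and in dimension $6$ this is $\dim N\ge 2$; there is no way to invoke Proposition~\ref{result:symrankbound} when $\dim N=2$, and the $S^3\times S^3$ example in Example~\ref{ex:S3xS3} shows that a circle fixed point set of minimal codimension in a $6$-manifold with $\Ric_2>0$ and a rank-$3$ torus really can have dimension $2$, not $4$. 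Your parenthetical remark that $n=6$ is ``available for $b_2=1$'' is plausible but you have not substantiated it, and your Step~1 as written explicitly assumes $r\ge 4$. The paper's chain construction avoids this entirely: with $r=n/2$, the chain from a $0$-dimensional $T^r$-fixed point to $M^n$ has $n/2$ proper steps of positive even codimension, so each step is forced to be codimension $2$, including the one landing in dimension $6$.

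The gap is repairable within your framework, and in a way that reduces to the paper's argument: your Step~2 already proves (without induction) that $M^n$ is homotopy equivalent to $\mathbb{C}\mathrm{P}^{n/2}$, so $\chi(M^n)>0$; by Conner's theorem (Lemma~\ref{lem:conner}) the $T^r$-action has a fixed point, and Proposition~\ref{prop:chainoftori} then produces the codimension-$2$ chain down to $M^4$. At that point your Step~3 (Freedman in dimension $4$, then Fang--Rong iteratively) goes through and matches the paper. One small additional remark: Fang--Rong's theorem requires the ambient manifold to be a homotopy $\mathbb{C}\mathrm{P}^m$ a priori, which should be established at each stage $M^{2i}$ of the chain (the paper does this via Hurewicz and Poincar\'e duality from the connectedness of the inclusions), not just for $M^n$.
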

		
		First, we will use Theorem \ref{thm:Ric2evenEuler} and Theorem \ref{result:connprinc} to justify the following:
		
		\begin{lemma}\label{lem:evenmaxsymrank}
			Let $M^n$ be a closed, simply connected Riemannian manifold of even dimension $n\geq 8$ with $\Ric_2>0$. 
			Suppose a torus $T^r$ of rank $r=\frac{n}{2}$ acts effectively and by isometries on $M^n$.
			Then there exist closed, simply connected, totally geodesic submanifolds $M^4 \subset M^6 \subset \dots \subset M^{n-2} \subset M^n$ and torus subgroups $T^1\subset T^2 \subset \dots \subset T^{r-1} \subset T^r$ such that
			\begin{enumerate}
				\item each submanifold $M^{2i}$ is fixed by the $T^{r-i}$ action,
				
				\item the action of $T^i \defeq T^r/T^{r-i}$ on $M^{2i}$ is effective for all $i$, and 
				
				\item each inclusion $M^{2i}\hookrightarrow M^{2i+2}$ is $(2i-1)$-connected.
			\end{enumerate}
		\end{lemma}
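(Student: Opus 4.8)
The plan is to build the two chains simultaneously by a downward induction on $i$, starting from $M^n$ itself and repeatedly extracting a codimension-$2$ totally geodesic fixed-point component together with a circle subgroup that fixes it. This mirrors the argument behind Lemma \ref{lem:codim2} in the odd-dimensional case, but now I need to carry a strengthened hypothesis down the chain, namely that at each stage the ambient manifold is simply connected, even-dimensional, has $\Ric_2>0$, and carries an effective isometric torus action of \emph{maximal} rank for its dimension. The key point making this work is that $r=\tfrac n2$ is exactly the maximal symmetry rank for an even-dimensional manifold with $\Ric_2>0$ by Proposition \ref{result:symrankbound} (equivalently Theorem \ref{result:main}), so that after passing to each codimension-$2$ totally geodesic submanifold the rank drops by exactly one and we land again in the maximal-rank situation in dimension $2i$.

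First I would set up the inductive step. Suppose we have reached a closed, simply connected, totally geodesic $M^{2j}\subseteq M^n$ of even dimension $2j\geq 6$, fixed by a subtorus $T^{r-j}\subset T^r$, with the quotient $T^j=T^r/T^{r-j}$ acting effectively on $M^{2j}$. Since $M^{2j}$ is totally geodesic, $\Ric_2(M^{2j})>0$, and $j=\tfrac{2j}{2}$ is the maximal symmetry rank in dimension $2j$. Now I run the same argument as in Lemma \ref{lem:codim2}: among all components of fixed-point sets of circle subgroups of $T^j$ acting on $M^{2j}$, choose one, call it $M^{2j-2}$, of minimal codimension; Proposition \ref{result:torusfixedpt} (applicable since $j\geq 3$) guarantees such a circle subaction with nonempty fixed set exists, and Proposition \ref{prop:mincodim} together with Proposition \ref{result:symrankbound} forces the minimal codimension to be exactly $2$ and forces the induced $T^{j-1}$-action on $M^{2j-2}$ to be effective — here one re-uses verbatim the dichotomy argument of Lemma \ref{lem:codim2} ruling out $\dim=1$ and $3\leq\dim\leq 2j-4$, and the involution/codimension-$1$ argument ruling out an ineffective quotient action. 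Pulling this circle subgroup back to $T^r$ gives $T^{r-j+1}\supset T^{r-j}$ fixing $M^{2j-2}$. Simple connectivity of $M^{2j-2}$ follows from Theorem \ref{result:connprinc}: the inclusion $M^{2j-2}\hookrightarrow M^{2j}$ is $(2j-2\cdot 2+2-2)=(2j-4)$-connected, and since $2j-4\geq 2$ when $j\geq 3$, $\pi_1(M^{2j-2})\cong\pi_1(M^{2j})=0$. Finally the connectedness statement in conclusion (3) is exactly the Connectedness Principle again: with codimension $d=2$, $k=2$, and trivial extra group contribution, $M^{2i}\hookrightarrow M^{2i+2}$ is $(2i+2-4+2-2)=(2i-2)$-connected — wait, let me recompute in the write-up: the formula $n-2d+2-k$ with $n=2i+2$, $d=2$, $k=2$ gives $2i+2-4+2-2=2i-2$; I will need to check whether the claimed $(2i-1)$-connectivity in the lemma comes from an improved count using that the submanifold is also a fixed-point set (so $\delta(G)$ or the refined Montgomery–Yang/Conner-type bound applies), and adjust the bookkeeping accordingly, since in codimension $2$ one typically gets one extra degree of connectivity for free.

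The base of the induction is reached when $2j=6$, yielding $M^4\subset M^6$; one does not descend further, and $M^4$ is automatically simply connected by the same Connectedness Principle step. I would phrase the whole construction as a finite downward recursion from $i=r-1$ to $i=2$, so that all three bulleted conclusions hold for each $i$ by construction. The main obstacle I anticipate is pinning down the exact connectedness constant in part (3): naively Theorem \ref{result:connprinc} gives $(2i-2)$-connectedness for the inclusion $M^{2i}\hookrightarrow M^{2i+2}$, so obtaining $(2i-1)$-connectedness requires either exploiting that $M^{2i}$ is a full fixed-point component of a circle action (which, for circle actions fixing a codimension-$2$ submanifold, improves the connectivity by one via the normal bundle structure and Smith theory, cf.\ the discussion around Lemma \ref{lem:MontgomeryYang} and \cite{MontgomeryYang}), or a direct Poincar\'e-duality/Gysin-sequence argument as in Lemma \ref{lem:periodic}. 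I would devote the bulk of the proof's care to that point; the rest is a faithful repetition of the Lemma \ref{lem:codim2} machinery applied in each dimension, together with the repeated invocation of Proposition \ref{result:symrankbound} to keep the rank maximal at every stage.
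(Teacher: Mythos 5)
Your proposal takes a genuinely different route from the paper, and it has two concrete issues: one minor (already resolvable) and one that is a real gap.

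\textbf{The approach differs from the paper.} You build the chain top-down by repeatedly extracting a codimension-$2$ fixed-point component from a circle subaction. The paper instead proceeds globally: it first invokes Theorem \ref{thm:Ric2evenEuler} to get $\chi(M^n)>0$, hence a $T^r$-fixed point $x$, and then applies Proposition \ref{prop:chainoftori} to obtain the entire chain $M^{T^r}_x\subset M^{T^{r-1}}_x\subset\cdots\subset M$ at once. Since there are $r=n/2$ proper inclusions, each of positive even codimension, and the total codimension cannot exceed $n$, every step is exactly codimension~$2$ and $M^{T^r}_x$ is a point. This single counting argument replaces your dimension-by-dimension case analysis, and more importantly anchors every level of the chain to a common $T^r$-fixed point $x$, which is what your approach loses.

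\textbf{The connectedness constant is not a problem.} You noted a discrepancy between $(2i-2)$ and the claimed $(2i-1)$ and speculated you might need Montgomery--Yang or Smith theory. You do not: the circle $T^{r-i}/T^{r-i-1}$ fixes $M^{2i}$ in $M^{2i+2}$ and acts with $1$-dimensional principal orbits, so Theorem \ref{result:connprinc} already gives $(2i+2)-2\cdot 2+2-2+\delta(G)=(2i-1)$-connected with $\delta(G)=1$. No additional input is needed.

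\textbf{The genuine gap is at the dimension-$6$ step.} You assert that the dichotomy of Lemma \ref{lem:codim2} can be ``re-used verbatim'' to force the minimal-codimension fixed set $N\subset M^{2j}$ to have codimension $2$. For even dimensions this works when $2j\geq 8$: if $\dim N$ is $0$ you contradict Proposition \ref{prop:mincodim}; if $\dim N=2$ you contradict $\symrank(N)\leq 2<j-1$; if $4\leq \dim N\leq 2j-4$ you contradict Proposition \ref{result:symrankbound}. But when $2j=6$ (so $j=3$), the possibility $\dim N=2$ with an effective $T^2$-action on a surface (necessarily $N\cong T^2$) cannot be excluded this way, and in fact it \emph{does} occur: the exotic metric on $S^3\times S^3$ of Example \ref{ex:S3xS3} has exactly this behavior, as noted after Lemma \ref{lem:codim2even}---no circle subgroup has a codimension-$2$ fixed set there. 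To close this gap you would have to argue separately that the $M^6$ arising in your chain has a $T^3$-fixed point, e.g.\ by first establishing $H^3(M^n;\mathbb{Z})=0$ from the higher-codimension-$2$ steps (via Lemma \ref{lem:periodic}), transporting this down the connected chain to conclude $H^3(M^6)=0$ and hence $\chi(M^6)>0$, and only then applying Proposition \ref{prop:chainoftori} at level~$6$. This works, but it is an essential extra step you did not include, and it makes the argument more convoluted than the paper's one-shot Euler-characteristic-plus-chain construction.
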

		
		\begin{proof}
			By Theorem \ref{thm:Ric2evenEuler}, because $r=\frac{n}{2}\geq \frac{n}{4}+2$ when $n\geq 8$, we have $\chi(M^n)>0$.
			Thus, the $T^r$-action on $M^n$ has non-empty fixed point set. 
			By Proposition \ref{prop:chainoftori}, given a point $x$ in $M^{T^r}$, there exists a chain of subgroups $T^1\subset T^2\subset \dots\subset T^{r-1}\subset T^r$ such that the following inclusions of components of fixed point sets containing $x$ are of positive, even codimension:
			\[
			M^{T^r}_x\subset M^{T^{r-1}}_x\subset \dots\subset M^{T^2}_x\subset M^{T^1}_x\subset M.
			\]
			Because $r=\frac{n}{2}$ and because each of the inclusions above are of positive, even codimension, it follows that $M^{T^r}_x$ is zero-dimensional and each inclusion $M_x^{T^j}\subset M_x^{T^{j-1}}$ is of codimension $2$.
			Define $M^{2i} = M_x^{T^{r-i}}$.
			Because each submanifold $M^{2i}$ is totally geodesic, we have $\Ric_2(M^{2i})>0$ for $2i\geq 4$.
			By Theorem \ref{result:connprinc}, each inclusion $M^{2i}\hookrightarrow M^{2i+2}$ is $(2i-1)$-connected, and because $M$ is simply connected, so are $M^{2i}$ for $2i\geq 4$.
			
			Now, we will show that the $T^i\defeq T^r/T^{r-i}$ action on $M^{2i}$ is effective for all $i$. 
			Suppose for the sake of contradiction that the kernel of the action is non-trivial for some $i$.
			By choosing the largest such index $i$, we may assume that the $T^{i+1}$ action on $M^{2i+2}$ is effective.
			As in the proof of Lemma \ref{lem:codim2}, it follows that there exists an involution $\sigma\in T^r$ that fixes $M^{2i}$, and the component $F$ of the fixed point set of $\sigma$ containing $x$ has dimension strictly larger than $M^{2i}$.
			Let $S^1\subset T^r$ be any circle subgroup containing $\sigma$. 
			Then the codimension of the fixed point set component $M^{S^1}_x$ in $F$ is even, and the codimension of $M^{S^1}_x$ in $M^{2i+2}$ is also even.
			Thus, it follows that the codimension of $F$ in $M^{2i+2}$ must be even and smaller than 2.
			This implies that the codimension of $F$ in $M^{2i+2}$ is zero, and hence the $T^{i+1}$ action on $M^{2i+2}$ is not effective, which is a contradiction.
			Therefore, the $T^i$ action on $M^{2i}$ must be effective for all $i$.
		\end{proof}

		We are now prepared to classify even-dimensional manifolds with $b_2\leq 1$, $\Ric_2>0$, and maximal symmetry rank.
		
		\begin{proof}[Proof of Theorem \ref{thm:evenmaxsymrank}]
			Let $M^n$ be a closed, simply connected Riemannian manifold of even dimension $n\geq 8$ with $\Ric_2>0$, and suppose a torus $T^r$ of rank $r=\frac{n}{2}$ acts effectively and by isometries on $M^n$.
			By Lemma \ref{lem:evenmaxsymrank}, there exist closed, simply connected, totally geodesic submanifolds $M^4 \subset M^6 \subset \dots \subset M^{n-2} \subset M^n$ and torus subgroups $T^1\subset T^2 \subset \dots \subset T^{r-1} \subset T^r$ such that each submanifold $M^{2i}$ is fixed by the $T^{r-i}$ action, the action of $T^i \defeq T^r/T^{r-i}$ on $M^{2i}$ is effective for all $i$, and each inclusion $M^{2i}\hookrightarrow M^{2i+2}$ is $(2i-1)$-connected.
			Thus $b_2(M^4) = b_2(M^6) = \dots = b_2(M^{n-2}) = b_2(M^n)$.
			So if $b_2(M^n) \leq 1$, then $b_2(M^4) \leq 1$, and it follows from Corollary \ref{cor:dim4} that $M^4$ is equivariantly diffeomorphic to either $S^4$ or $\mathbb{C}\mathrm{P}^2$.
			
			Because $M^4\hookrightarrow M^6$ is $3$-connected, we have that $\pi_3(M^6)=0$, and $\pi_2(M^6) = 0$ (resp. \!$\mathbb{Z}$) if $M^4\cong S^4$ (resp. \!$\mathbb{C}\mathrm{P}^2$).
			It follows from Poincar\'e duality and the Hurewicz theorem that $M^6$ is homotopy equivalent to $S^6$ or $\mathbb{C}\mathrm{P}^3$.
			By iterating the same argument for $M^8,\dots,M^n$, we conclude that $M^n$ is homotopy equivalent to $S^n$ if $b_2(M^n)=0$, or $M^n$ is homotopy equivalent to $\mathbb{C}\mathrm{P}^{n/2}$ if $b_2(M^n)=1$.
			
			If $b_2(M^n)=0$, then similar to the proof of Theorem \ref{thm:oddmaxsymrank}, by inductively applying Lemma \ref{lem:MontgomeryYang} to the submanifolds $M^4\subset M^6 \subset \dots \subset M^n$, it follows that $M^n$ is diffeomorphic to $S^n$.
			
			Suppose instead that $b_2(M^n)=1$, and hence $M^4$ is homeomorphic to $\mathbb{C}\mathrm{P}^2$ and $M^6,\dots,M^n$ are homotopy complex projective spaces.
			Fang and Rong proved that any homotopy $\mathbb{C}\mathrm{P}^n$ with a submanifold homeomorphic to $\mathbb{C}\mathrm{P}^{n-1}$ such that the inclusion map is at least $3$-connected must be homeomorphic to $\mathbb{C}\mathrm{P}^n$ \cite{FangRong2004}.
			Because $M^4$ is homeomorphic to $\mathbb{C}\mathrm{P}^2$ and each inclusion $M^{2i}\hookrightarrow M^{2i+1}$ is $(2i-1)$-connected, we can apply Fang and Rong's result to each inclusion $M^4 \subset M^6 \subset \cdots \subset M^n$, concluding that $M^n$ is homeomorphic to $\mathbb{C}\mathrm{P}^{n/2}$.
		\end{proof}

		\subsection{Classification for $\mathbf{3/4}$-maximal symmetry rank}\label{sec:even3/4}
			
			In this final section, we will prove Theorem \ref{result:even}, which we restate here for convenience:
			
			\begin{theorem}\label{thm:even3/4maxsymrank}
				Let $M^n$ be a closed, simply connected, Riemannian manifold of even dimension $n\geq 8$ with $\Ric_2>0$.
				Suppose a torus $T^r$ of rank $r\geq\tfrac{3n+6}{8}$ acts effectively and by isometries on $M^n$.
				Then $H^i(M^n;\mathbb{Z})=0$ for all odd values of $i$.
				Furthermore:
				\begin{enumerate}
					\item If $b_2(M^n)=0$, then $M^n$ is homeomorphic to $S^n$.
					
					\item If $b_2(M^n)=1$, then $M^n$ is tangentially homotopy equivalent to $\mathbb{C}\mathrm{P}^{n/2}$.
				\end{enumerate}
			\end{theorem}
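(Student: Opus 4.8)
The plan is to induct on the even dimension $n$, with the classification in smaller even dimensions (including the $3/4$-maximal case) as the inductive hypothesis. For the base cases $n\in\{8,10,12\}$ the symmetry rank bound of Theorem \ref{result:main} (so $\symrank\le n/2$ since $\Ric_2>0$) together with $r\ge\frac{3n+6}{8}$ forces $r=n/2$, so Theorem \ref{thm:evenmaxsymrank} applies and gives the stronger conclusions ($M^n$ diffeomorphic to $S^n$, resp.\ homeomorphic to $\mathbb{C}\mathrm{P}^{n/2}$), which in particular yield the desired statements. So I fix $n\ge 14$ and assume the theorem in all smaller even dimensions.

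Since $r\ge\frac{3n+6}{8}>3$, Proposition \ref{result:torusfixedpt} provides circle subgroups of $T^r$ with nonempty fixed point set; among all connected components of fixed point sets of circle subactions, let $N$ be one that is maximal under inclusion, fixed by some $S^1\subset T^r$. Arguing exactly as in Lemma \ref{lem:NRic2} --- using Proposition \ref{prop:mincodim} to see that $T^r/S^1$ acts almost effectively on $N$, the symmetry rank bound of Theorem \ref{result:main} applied to the totally geodesic (hence $\Ric_2>0$) submanifold $N$, and the fact that $\codim N$ is even --- I get $\dim N\ge\frac{3n-2}{4}$, and moreover either $\codim N=2$ or $\symrank N\ge\frac{3\dim N+6}{8}$. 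By Theorem \ref{result:connprinc} with $G=S^1$ (so $\delta(G)=1$), the inclusion $N\hookrightarrow M^n$ is $(n-2\codim N+1)$-connected: this is $(n-3)$-connectedness when $\codim N=2$, and since $\codim N\ge 4$ forces $\codim N\le\frac{n+2}{4}$, it is at least $\frac{n}{2}$-connectedness otherwise. In particular $N$ is simply connected.

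When $\codim N=2$, Lemma \ref{lem:periodic} (with $d=2$, $l=1$) shows $\cup e\colon H^i(M^n;\mathbb{Z})\to H^{i+2}(M^n;\mathbb{Z})$ is onto for $1\le i<n-3$ and injective for $1<i\le n-3$, where $e$ is Poincar\'e dual to $[N]$; starting from $H^1(M^n)=0$ and using Poincar\'e duality in the top degrees gives $H^i(M^n;\mathbb{Z})=0$ for all odd $i$ and shows that $M^n$ has the integral cohomology ring of $S^n$ if $b_2=0$ and of $\mathbb{C}\mathrm{P}^{n/2}$ if $b_2=1$ --- the even-dimensional analogue of Corollary \ref{cor:homotopysphere} and of Fang and Rong's argument in \cite{FangRong2004}. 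When $\codim N\ge 4$, the inclusion is at least $\frac{n}{2}$-connected, $\dim N\ge\frac{3n-2}{4}$ is even and $\ge 8$, and $\symrank N\ge\frac{3\dim N+6}{8}$, so the induction hypothesis applies to $N$; since $b_2(N)=b_2(M^n)$, either $N$ is a homotopy sphere ($b_2=0$) or $N$ is tangentially homotopy equivalent to $\mathbb{C}\mathrm{P}^{\dim N/2}$ ($b_2=1$). Transferring this up through the highly connected inclusion together with Poincar\'e duality on $M^n$ --- and, when $b_2=1$, Lemma \ref{lem:periodic} applied with $d=\codim N$ to pin down the cup-product structure --- again yields $H^i(M^n;\mathbb{Z})=0$ for odd $i$ and that $M^n$ has the cohomology ring of $S^n$ or of $\mathbb{C}\mathrm{P}^{n/2}$. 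Since $M^n$ is simply connected, in both subcases $M^n$ is homotopy equivalent to $S^n$ or to $\mathbb{C}\mathrm{P}^{n/2}$ accordingly.

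To finish: if $b_2(M^n)=0$ then $M^n$ is a homotopy $n$-sphere and Smale's theorem \cite{smale} gives a homeomorphism with $S^n$; if $b_2(M^n)=1$ then $M^n$ is a homotopy $\mathbb{C}\mathrm{P}^{n/2}$ carrying an effective isometric $T^r$-action with $r\ge\frac{3n+6}{8}=\frac{3(n/2+1)}{4}$, and the theorem of Dessai and Wilking \cite{DessaiWilking} (Petrie's conjecture under such symmetry) yields that $M^n$ is tangentially homotopy equivalent to $\mathbb{C}\mathrm{P}^{n/2}$. The main obstacle I anticipate is the cohomology-ring bookkeeping in the case $\codim N\ge 4$ with $b_2=1$: the highly connected inclusion only controls $H^\ast(M^n)$ below degree roughly $n/2$, so one must combine it carefully with Poincar\'e duality and the periodicity of Lemma \ref{lem:periodic} to conclude that the generator of $H^2(M^n)$ has nonzero $\tfrac{n}{2}$-th power, and keeping track of the parity of $n/2$ in the homology-transfer and duality arguments is the delicate point.
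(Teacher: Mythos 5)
Your proposal is correct and follows essentially the same strategy as the paper: induction on even $n$ with base cases $n=8,10,12$ handled by the maximal-symmetry-rank theorem, then producing a maximal fixed-point component $N$ of a circle subaction, splitting into the cases $\codim N=2$ (where Lemma \ref{lem:periodic} gives the cohomology conclusion directly) and $\codim N\ge 4$ (where the inductive hypothesis on $N$ plus the $\tfrac{n}{2}$-connected inclusion transfers the cohomology up), and finishing with Smale in the $b_2=0$ case and Dessai--Wilking in the $b_2=1$ case. The only cosmetic differences are that you obtain circle subgroups with fixed points directly from Proposition \ref{result:torusfixedpt} rather than first invoking Theorem \ref{result:evenEuler} and Proposition \ref{prop:chainoftori} as the paper does, and you adapt Lemma \ref{lem:NRic2} to the even case rather than citing the paper's even analogue Lemma \ref{lem:NRic2even}; the delicate cup-product bookkeeping you flag in the $\codim N\ge 4$, $b_2=1$ subcase is handled in the paper by the same combination of Lemma \ref{lem:periodic} and the connected inclusion that you describe.
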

			
			First, we establish an even dimensional analogue of Lemma \ref{lem:NRic2}:
			
			\begin{lemma}\label{lem:NRic2even}
				Let $M^n$ be a closed even-dimensional manifold with $\Ric_2>0$ on which a torus $T^r$ acts isometrically and effectively with non-empty fixed point set. 
				If $r\geq\frac{3n+6}{8}$, then there exists a connected submanifold $N\subset M$ of minimal codimension fixed by a circle subgroup of $T^r$ such that
	%			Given $x$ fixed by $T^r$, let $T^1\subset T^2\subset \dots\subset T^{r-1}\subset T^r$ be the chain of subgroups guaranteed by Proposition \ref{prop:chainoftori} such that the following inclusions of fixed point set components containing $x$ are each of minimal, positive, even codimension:
	%			\[
	%				M^{T^r}_x\subset M^{T^{r-1}}_x\subset \dots\subset M^{T^2}_x\subset M^{T^1}_x\subset M.
	%			\] 
	%			If $r\geq\frac{3n+6}{8}$, then:
				\begin{enumerate}
					\item \label{item:1even} $\dim(N)\geq \frac{3n-2}{4}$, and
					
					\item \label{item:2even} either $\codim(N)=2$ or $\symrank(N)\geq \frac{3\dim(N)+6}{8}$.
				\end{enumerate}
			\end{lemma}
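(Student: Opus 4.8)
The plan is to transcribe the proof of Lemma \ref{lem:NRic2} into the even-dimensional setting, with the bound $r \geq \frac{3n+6}{8}$ in place of $r \geq \frac{3n+10}{8}$ and the even-dimensional symmetry rank bound, Item \eqref{item:even} of Proposition \ref{prop:symrankbound}, in place of the odd one. First I would observe that, since $n \geq 8$, we have $r \geq \bigl\lceil \tfrac{3n+6}{8} \bigr\rceil \geq 4$. Because $T^r$ has a fixed point $x$ and the action is effective, for every circle subgroup $S^1 \subset T^r$ the component $M^{S^1}_x$ is a proper submanifold, of positive and (as $M^n$ is even-dimensional) even codimension; hence the collection of fixed-point-set components of circle subgroups of $T^r$ is non-empty, and I may choose one, $N$, of minimal codimension, fixed by a circle $S^1 \subset T^r$. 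Set $T^{r-1} \defeq T^r/S^1$.

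For Part \eqref{item:1even} I would run the standard argument. By minimality of $\codim(N)$ and Proposition \ref{prop:mincodim}, the $T^{r-1}$-action on $N$ has at most finite kernel, so $\symrank(N) \geq r-1$ and $\dim(N) \geq r-1 \geq 4$; since $\dim(N)$ is even and $N$ is totally geodesic, hence $\Ric_2(N)>0$, Item \eqref{item:even} of Proposition \ref{prop:symrankbound} gives $\symrank(N) \leq \tfrac{\dim(N)}{2}$. Combining,
\[
\tfrac{\dim(N)}{2} \;\geq\; r-1 \;\geq\; \tfrac{3n+6}{8}-1 \;=\; \tfrac{3n-2}{8},
\]
which is $\dim(N) \geq \tfrac{3n-2}{4}$.

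For Part \eqref{item:2even} I would argue by contradiction: assume $\codim(N)\neq 2$ and $\symrank(N) < \tfrac{3\dim(N)+6}{8}$. Then $\codim(N)$, being even, is at least $4$, so $\dim(N)\leq n-4$, and
\[
\symrank(N) \;<\; \tfrac{3\dim(N)+6}{8} \;\leq\; \tfrac{3(n-4)+6}{8} \;=\; \tfrac{3n-6}{8} \;<\; \tfrac{3n-2}{8} \;\leq\; r-1.
\]
Thus the $T^{r-1}$-action on $N$ cannot have finite kernel; a circle in its kernel together with $S^1$ gives a $T^2 \subset T^r$ fixing $N$, and Proposition \ref{prop:mincodim} then produces a fixed-point-set component of strictly smaller codimension than $N$, contradicting the minimality of $\codim(N)$.

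I do not anticipate a genuine obstacle: the argument is a line-by-line port of Lemma \ref{lem:NRic2}. The one point requiring care is the parity bookkeeping — because $M^n$ is even-dimensional, every circle-action fixed-point-set component has even codimension, forcing $\dim(N)$ even, which is what permits the sharper estimate $\symrank(N) \leq \tfrac{\dim(N)}{2}$ (rather than $\lfloor\tfrac{\dim(N)+1}{2}\rfloor$) and the improved bound $\codim(N)\geq 4$ in Part \eqref{item:2even}. Checking that the displayed chains of strict and non-strict inequalities hold for every even $n \geq 8$ is routine arithmetic.
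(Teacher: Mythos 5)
Your proof is correct on the range $n\geq 8$ where the paper actually invokes this lemma, but for Part~\eqref{item:1even} it takes a genuinely different route from the paper's. The paper applies Proposition~\ref{prop:chainoftori} at a fixed point $x$ of $T^r$, sets $N=M_x^{T^1}$, and reads off $\dim(N)\geq 2(r-1)$ by counting the $r-1$ proper inclusions of positive even codimension in the chain $M_x^{T^r}\subset\cdots\subset M_x^{T^1}$; this is purely combinatorial and uses no curvature bound on $N$ itself. You instead port the argument of Lemma~\ref{lem:NRic2}: almost-effectiveness of the $T^{r-1}$-action on $N$ (via Proposition~\ref{prop:mincodim} and minimality) gives $\symrank(N)\geq r-1$, and the nontrivial symmetry rank bound of Item~\eqref{item:even} of Proposition~\ref{prop:symrankbound} applied to $N$ gives $\dim(N)\geq 2\,\symrank(N)\geq 2(r-1)$. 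Both routes produce the same intermediate estimate $\dim(N)\geq 2(r-1)\geq\tfrac{3n-2}{4}$, and your Part~\eqref{item:2even} argument coincides with the paper's word for word.

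Two small caveats. First, the inequality $r-1\geq 4$ is off by one: $r\geq\lceil\tfrac{3n+6}{8}\rceil\geq 4$ (for $n\geq 8$) only yields $r-1\geq 3$; the correct step is that $\dim(N)\geq r-1\geq 3$ together with evenness of $\dim(N)$ forces $\dim(N)\geq 4$, which is what you need to invoke $\Ric_2(N)>0$ and Proposition~\ref{prop:symrankbound}. Second, the lemma as stated has no hypothesis $n\geq 8$ (unlike its odd analogue, Lemma~\ref{lem:NRic2}, which explicitly assumes $n\geq 7$), and your argument leans on it: for $n=6$ one only gets $r\geq 3$, hence $\dim(N)\geq r-1=2$, and if $\dim(N)=2$ you cannot apply Proposition~\ref{prop:symrankbound} to $N$ because $\Ric_2$ is not defined there. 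The paper's chain-counting handles $n=6$ without issue, whereas your port does not. Since the lemma is only used downstream for $n\geq 8$, this is more a cosmetic mismatch with the statement than a flaw in your idea, but strictly your proof does not cover every case the lemma asserts.
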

			
			\begin{proof}
				Suppose the $T^r$-action on $M^n$ has non-empty fixed point set, and consider the chain of fixed point set components $M^{T^r}_x\subset M^{T^{r-1}}_x\subset \dots\subset M^{T^2}_x\subset M^{T^1}_x\subset M$ guaranteed by Proposition \ref{prop:chainoftori}.
				We will choose $N$ to be $M_x^{T^1}$.
				Because the inclusions $M_x^{T^i}\subset M_x^{T^{i-1}}$ are each of positive, even codimension, we have that $\dim(N)\geq 2r-2$, and because $r\geq \frac{3n+6}{8}$, it follows that $\dim(N) \geq \frac{3n-2}{4}$. This proves Part \eqref{item:1even}.
				
				To prove Part \eqref{item:2even}, assume $\codim(N)\neq 2$ and $\symrank(N)< \frac{3\dim(N)+6}{8}$. 
				Recall from Proposition \ref{prop:chainoftori} that the inclusion $N\subset M^n$ is minimal in the sense that if a circle subgroup of $T^r$ fixes another connected submanifold $N'\subset M^n$, then $\codim(N')\geq \codim(N)$.
				Now, because the $T^r$-action on $M^n$ is effective and the codimension of $N\subset M^n$ must be even, we have that $\dim(N)\leq n-4$. Thus
				\[
					\symrank(N)<\tfrac{3\dim(N)+6}{8}\leq\tfrac{3(n-4)+6}{8}<\tfrac{3n+6}{8}-1\leq r-1.
				\]
				So setting $T^{r-1}=T^r/T^1$, it follows that there is a circle subgroup of $T^{r-1}$ that fixes $N$, meaning that $N$ is fixed by a two-dimensional torus subgroup of $T^r$. 
				By Proposition \ref{prop:mincodim}, this implies that there is a circle subgroup of $T^r$ that fixed a submanifold of larger dimension than $N$, which is a contradiction.
				Therefore, Part \eqref{item:2even} follows.
			\end{proof}
			
			In proving Theorem \ref{thm:even3/4maxsymrank}, if the submanifold $N$ from Lemma \ref{lem:NRic2even} is of codimension $2$, then we will apply the following:
			
			\begin{lemma}\label{lem:codim2even}
				Suppose $M^n$ is a closed, even-dimensional, simply connected, smooth manifold of dimension $n\geq 4$. 
				Assume $M^n$ contains a closed, connected, embedded submanifold $N^{n-2}$ of codimension $2$ such that the inclusion $N^{n-2}\hookrightarrow M^n$ is $(n-3)$-connected.
				Then $H^i(M^n;\mathbb{Z})=0$ for all odd values of $i$.
			\end{lemma}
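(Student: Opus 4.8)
The plan is to extract the vanishing of the odd-degree integral cohomology of $M^n$ from the periodicity that a highly-connected codimension-$2$ inclusion imposes on $H^\ast(M^n;\mathbb{Z})$ via Lemma \ref{lem:periodic}, supplemented by Poincaré duality in the top odd degree. This runs parallel to the reasoning behind Corollary \ref{cor:homotopysphere}, but in even dimensions one gets only the weaker conclusion stated. I would first dispose of the degenerate case $n=4$ directly: the only odd degrees are $1$ and $3$, and $H^1(M^4;\mathbb{Z})=0$ because $M^4$ is simply connected, while $H^3(M^4;\mathbb{Z})\cong H_1(M^4;\mathbb{Z})=0$ by Poincaré duality. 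So assume from now on that $n\geq 6$.

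Next I would verify the hypotheses needed to apply Lemma \ref{lem:periodic}. Since $M^n$ is simply connected it is orientable; since the inclusion $N^{n-2}\hookrightarrow M^n$ is $(n-3)$-connected and $n-3\geq 3>1$, the induced map $\pi_1(N^{n-2})\to\pi_1(M^n)$ is an isomorphism, so $N^{n-2}$ is simply connected and hence orientable. Thus $M^n$ and $N^{n-2}$ meet the hypotheses of Lemma \ref{lem:periodic} with $d=2$ and $l=1$; the requirement $n-d-2l>0$ reads $n-4>0$, which holds. Letting $e\in H^2(M^n;\mathbb{Z})$ denote the Poincaré dual of the image $[N]\in H_{n-2}(M^n;\mathbb{Z})$ of the fundamental class of $N^{n-2}$, Lemma \ref{lem:periodic} gives that $\cup e\colon H^i(M^n;\mathbb{Z})\to H^{i+2}(M^n;\mathbb{Z})$ is surjective for all $i$ with $1\leq i<n-3$.

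I would then induct down the odd degrees: $H^1(M^n;\mathbb{Z})=0$ since $M^n$ is simply connected, and whenever $H^{2j-1}(M^n;\mathbb{Z})=0$ with $2j-1<n-3$, surjectivity of $\cup e$ forces $H^{2j+1}(M^n;\mathbb{Z})=0$. This yields $H^i(M^n;\mathbb{Z})=0$ for every odd $i$ with $i\leq n-3$. Because $n$ is even, the only odd degree in $\{1,\dots,n\}$ not yet covered is $i=n-1$, and there Poincaré duality gives $H^{n-1}(M^n;\mathbb{Z})\cong H_1(M^n;\mathbb{Z})=0$. Combining these, $H^i(M^n;\mathbb{Z})=0$ for all odd $i$, as claimed.

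The argument is essentially bookkeeping once Lemma \ref{lem:periodic} is in hand — all of the curvature and symmetry hypotheses of Theorem \ref{thm:even3/4maxsymrank} have already been spent to produce the $(n-3)$-connected codimension-$2$ submanifold that feeds into it, so there is no geometric obstacle here. The only points that require genuine care are the edge case $n=4$, where Lemma \ref{lem:periodic} does not apply, and checking that the periodicity range $1\leq i<n-3$ together with the single Poincaré-duality computation at degree $n-1$ exhausts all odd degrees; this last verification is the step I would expect to be most prone to off-by-one errors.
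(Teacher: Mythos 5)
Your proof is correct and follows essentially the same route as the paper's: dispose of $n=4$ by Poincar\'e duality, then for $n\geq 6$ invoke Lemma \ref{lem:periodic} with $d=2$, $l=1$ to propagate $H^1(M^n;\mathbb{Z})=0$ through the odd degrees up to $n-3$, and finish off $H^{n-1}$ by Poincar\'e duality. The paper states this more tersely, but the underlying argument and the use of Lemma \ref{lem:periodic} are identical.
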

			
			\begin{proof}
				In dimension $n=4$, the result follows from Poincar\'e duality.
				Suppose $n\geq 6$.
				Because $M^n$ is simply connected, so is $N^{n-2}$.
				Let $[N]\in H_{n-2}(M^n;\mathbb{Z})$ denote the image of the fundamental class of $N$, and let $e\in H^2(M^n;\mathbb{Z})$ denote its Poincar\'e dual.
				By Lemma \ref{lem:periodic}, the homomorphisms $\cup e: H^j(M^n;\mathbb{Z})\to H^{j+2}(M^n;\mathbb{Z})$ are surjective for $1\leq j< n-3$ and injective for $1< j\leq n-3$.
				Because $M^n$ is simply connected, $H^{1}(M^n;\mathbb{Z})\cong 0\cong H^{n-1}(M^n;\mathbb{Z})$, and it follows that $H^i(M^n;\mathbb{Z})=0$ for all odd values of $i$.
			\end{proof}
			
			\begin{example}
				As we described in Example \ref{ex:S3xS3}, $S^3\times S^3$ admits a metric with $\Ric_2>0$ invariant under an effective $T^3$-action.
				It follows from Lemma \ref{lem:codim2even} that no action of any of the circle subgroups of $T^3$ has fixed point set of codimension $2$.
				In fact, one can show that the possible fixed point sets in $S^3\times S^3$ of circle subgroups in this example are either empty or diffeomorphic to $T^2$. 
			\end{example}
			
			We will now prove Theorem \ref{thm:even3/4maxsymrank} using the work of Smale in \cite{smale} and Dessai and Wilking in \cite{DessaiWilking}:
			
			\begin{proof}[Proof of Theorem \ref{thm:even3/4maxsymrank}]
				We will prove Theorem \ref{thm:even3/4maxsymrank} by induction on the dimension, $n$. 
				First note that in dimensions $n=8,10$, and $12$, we have $\lceil \frac{3n+6}{8} \rceil = \frac{n}{2}$.
				So the result holds in these dimensions by Theorem \ref{thm:evenmaxsymrank}, thus establishing our base case.
				
				Now, suppose for the sake of induction that Theorem \ref{thm:even3/4maxsymrank} holds in even dimensions $8,\dots,n-2$ for some $n\geq 14$. 
				We will show it also holds in dimension $n$.
				Consider a manifold $M^n$ satisfying the hypotheses of Theorem \ref{thm:even3/4maxsymrank}, and suppose $T^r$ acts isometrically and effectively on $M^n$ with $r = \symrank(M^n) \geq \frac{3n+6}{8}$.
				Then by Theorem \ref{result:evenEuler}, $\chi(M^n) > 0$, and hence the $T^r$-action on $M^n$ has a fixed point.
				By Lemma \ref{lem:NRic2even}, there exists a connected submanifold $N\subset M$ of minimal codimension fixed by a circle subgroup of $T^r$ such that $\dim N\geq\frac{3n-2}{4}$ and either $\codim(N)=2$ or $\symrank(N)\geq \frac{3\dim N+6}{8}$. 
				
				If $\codim(N)=2$, then by Theorem \ref{result:connprinc}, the inclusion $N^{n-2}\hookrightarrow M^n$ is $(n-3)$-connected, and by Lemma \ref{lem:codim2even}, $H^\mathrm{odd}(M^n;\mathbb{Z}) = 0$.
				By Lemma \ref{lem:periodic}, if $[N]\in H_{n-d}(M^n;\mathbb{Z})$ denotes the image of the fundamental class of $N$ and $e\in H^d(M^n;\mathbb{Z})$ denotes its Poincar\'e dual, then the homomorphisms $\cup e: H^i(M;\mathbb{Z})\to H^{i+d}(M;\mathbb{Z})$ are isomorphisms for $2\leq i \leq n-4$.
				If $b_2(M^n)\leq 1$, then because $M^n$ is simply connected, it has the cohomology ring of either a sphere or a complex projective space, and hence is homotopy equivalent to one of these spaces.
				For the case of a sphere, $M^n$ is homeomorphic to $S^n$ by Smale's resolution to the Poincar\'e conjecture for dimensions $\geq 5$ in \cite{smale}.
				In the case of a complex projective space, Dessai and Wilking proved in \cite{DessaiWilking} if a manifold is homotopy equivalent to $\mathbb{C}\mathrm{P}^m$ and admits a smooth effective action by a torus $T^r$ such that $m < 4r - 1$, then the manifold is tangentially homotopy equivalent to $\mathbb{C}\mathrm{P}^n$.
				Thus, it follows in this case that $M^n$ is tangentially homotopy equivalent to $\mathbb{C}\mathrm{P}^{n/2}$.
				
				Suppose instead $\symrank(N)\geq \frac{3\dim N+6}{8}$. 
				Then because $\dim N \geq \frac{3n-2}{4} \geq 10$ and $N$ is totally geodesic in $M$, we have that $\Ric_2(N)>0$.
				Thus, the induction hypothesis implies that $H^\mathrm{odd}(N;\mathbb{Z}) = 0$.
				Furthermore, because
				\[
					n - 2\codim N + 1 \geq \tfrac{n}{2},
				\]
				the inclusion $N\hookrightarrow M^n$ is at least $\frac{n}{2}$-connected by Theorem \ref{result:connprinc}.
				Thus it follows that $H^\mathrm{odd}(M^n;\mathbb{Z}) = 0$.
				If $b_2(M^n)\leq 1$, then $b_2(N)\leq 1$, and $N$ is either homeomorphic to a sphere are tangentially homotopy equivalent to a complex projective space by the induction hypothesis.
				Because the inclusion $N\hookrightarrow M^n$ is $\frac{n}{2}$-connected, it follows that $M^n$ has the cohomology ring of either a sphere or a complex projective space.
				Then, just as in the previous case, $M^n$ is either homeomorphic to $S^n$ or tangentially homotopy equivalent to $\mathbb{C}\mathrm{P}^{n/2}$.
			\end{proof}

	\bibliographystyle{alpha}
	\bibliography{bibfile}

\end{document}